\newtheorem{thm}{Theorem}[section]
\newtheorem{prop}[thm]{Proposition}
\newtheorem{cor}[thm]{Corollary}
\newtheorem{lem}[thm]{Lemma}
\theoremstyle{definition}
\newtheorem{rem}[thm]{Remark}
\newtheorem{defn}[thm]{Definition}
\newcommand{\C}{\mathbb{C}}
\newcommand{\Z}{\mathbb{Z}}
\DeclarePairedDelimiter{\pp}{(\!(}{)\!)}
\DeclarePairedDelimiter{\bb}{[[}{]]}
\newcommand{\wt}{{\rm wt}}
\newcommand{\res}{\mbox{\rm Res}}
\newcommand{\vac}{\mathbf{1}}
\newcommand{\End}{{\rm End
}}
\newcommand{\hZhu}[1]{\mathrm{Zhu}_{#1}}    
\newcommand{\Zhu}{\mathrm{Zhu}}
\newcommand{\HeZhu}[1]{\mathrm{A}_{#1}}     
\newcommand{\MTA}{\mathfrak{A}}
\newcommand{\mcU}{\mathscr{U}}              
\newcommand{\NL}{\mathrm{N}_{\mathrm{L}}}
\newcommand{\NR}{\mathrm{N}_{\mathrm{R}}}
\newcommand{\WeylVA}{\mathcal{V}}           
\newcommand{\Weylalg}{\mathscr{W}}          
\newcommand{\Lieghost}{\mathcal{L}}         
\newcommand{\tres}{\mathrm{res}}
\DeclareMathOperator{\id}{Id}
\DeclareMathOperator{\tr}{tr}
\DeclareMathOperator{\Mat}{Mat}
\DeclareMathOperator{\gr}{gr}
\DeclareMathOperator{\Span}{span}
\DeclareMathOperator{\Ima}{Im}
\DeclareMathOperator{\Soc}{Soc}
\DeclareMathOperator{\Rad}{Rad}
\DeclareMathOperator{\ad}{ad}
\DeclarePairedDelimiter{\no}{:}{:}          
\DeclareRobustCommand\longtwoheadrightarrow{\relbar\joinrel\twoheadrightarrow}
\author[Katrina Barron]{Katrina Barron}
\address[K.B.]{Department of Mathematics, University of Notre Dame, Notre Dame, IN 46556}
\email{kbarron@nd.edu}
\author[Justine Fasquel]{Justine Fasquel}
\address[J.F.]{Université Bourgogne Europe, CNRS, IMB UMR 5584, 21000 Dijon, France}
\email{justine.fasquel@u-bourgogne.fr}
\author[Florencia Orosz Hunziker]{Florencia Orosz Hunziker}
\address[F.O.-H.]{Department of Mathematics, University of Colorado Boulder, Boulder, CO 80309}
\email{florencia.orosz@colorado.edu}
\author[Gaywalee Yamskulna]{Gaywalee Yamskulna}
\address[G.Y.]{Department of Mathematics, Illinois State University, Normal, IL 61790}
\email{gyamsku@ilstu.edu}
\thanks{K.B. was supported by a Simons Foundation Travel Support Grant.
J.F.'s research was supported by a University of Melbourne Establishment Grant and an Andrew Sisson Support Package 2025. F.O.H. was partially supported by the US National Science Foundation under Grant No. DMS-2102786.
G.Y. was supported by an AMS-Simons PUI Faculty Grant and a grant from the College of Arts and Sciences at Illinois State University.}
\title{On mode transition algebras  for $\mathbb{Z}$-graded vertex algebras and applications to bosonic ghosts} 
\begin{document}

\begin{abstract}
We study the mode transition algebras and Zhu algebras in the setting of $\mathbb{Z}$-graded vertex algebras, with particular focus on the Weyl vertex algebra $\WeylVA$ at central charge 2 (also known as bosonic ghosts or the $\beta \gamma$-system).
We show that the mode transition algebras of $\WeylVA$ admit unity elements that form a family of strong unities in the sense of Damiolini-Gibney-Krashen. 
The existence of unities for the mode transition algebra of $\WeylVA$ allows us to explicitly construct all higher level Zhu algebras of $\WeylVA$. 
We further analyze weak $\WeylVA$-modules induced from Zhu algebras, proving that every such module is already induced from the level‑zero Zhu algebra.  
We then prove that all indecomposable reducible weight modules induced from a Zhu algebra are not weakly interlocked, and hence not strongly interlocked in the sense of Barron-Batistelli-Orosz Hunziker-Yamskulna.  
More generally, we show that the property of being weakly interlocked is preserved under the action of an invertible Li's $\mathbf{\Delta}$ operator. As an application, we prove that all indecomposable reducible weight $\WeylVA$-modules obtained via spectral flow of Zhu‑induced modules are likewise not weakly interlocked. These results clarify the role of being weakly interlocked in the modularity properties of bosonic ghost modules previously studied by Ridout-Wood and Allen-Wood.
\end{abstract}

\maketitle

\section{Introduction and preliminaries}

Vertex algebras and their modules are the basic building blocks of conformal field theory.
Among the tools developed to study their representation theory, Zhu algebras provide a powerful way to construct and analyze a certain category of modules thanks to the famous Zhu's correspondence \cite{Zhu,FZ,DLM}.
In another direction, the mode transition algebras, introduced more recently \cite{DGK}, offer new insights into both algebraic and geometric aspects of vertex algebras.
In this paper, we study these two families of associative algebras in the setting of $\mathbb{Z}$-graded vertex algebras,  which need not be vertex operator algebras. 

Zhu algebras were originally defined in \cite{Zhu} at level zero and later generalized in \cite{DLM} to higher levels $n \in \mathbb{Z}_{>0}$.
Concretely, they were defined as quotients of a vertex operator algebra $V$. They encode essential information about the representation theory of a vertex algebra, but explicit calculations --- especially at higher levels --- are notoriously difficult.
More recently, He \cite{He} provided a realization of Zhu algebras as quotients of universal enveloping algebras. 
We show that both these equivalent constructions naturally extend from the setting of vertex operator algebras to the setting of $\mathbb{Z}$-graded vertex algebras.

The second family of algebras that we discuss is the family of mode transition algebras.
They were introduced in \cite{DGK} for vertex operator algebras of so-called ``CFT" type, where they revealed deep connections between algebraic properties of the vertex operator algebra $V$ as well as geometric constructions of moduli of curves labeled by certain $V$-modules. 
In the Appendix of \cite{DGK}, other settings for mode transition algebras are given. Here
we show that the construction still holds for $\Z$‑graded vertex algebras (not necessarily with a conformal element),  and that they again interact closely with Zhu algebras in this broader setting.

As an application, we calculate the mode transition algebras for the Weyl vertex algebra at central charge 2, denoted $\WeylVA$,
and also known as bosonic ghosts or the $\beta \gamma$-system. 
This vertex algebra is conformal and $\mathbb{Z}$-graded but it is not a vertex operator algebra.  
We prove that these algebras admit unity elements forming a strong family of unities in the sense of \cite{DGK}.
The fact that these algebras have unities allows us to determine all higher level Zhu algebras for $\WeylVA$. 
This is only the second example in which Zhu algebras have been explicitly described at all levels, the first being the Heisenberg vertex operator algebra where all Zhu algebras were determined in \cite{DGK} using the mode transition algebra, proving a conjecture in \cite{AB23}.

Next, we turn our attention to certain categories of modules for $\mathbb{Z}_{\geq 0}$-graded vertex algebras and their properties.
In particular, the induction functor of \cite{Zhu} and \cite{DLM} that produces a $\mathbb{Z}_{\geq 0}$-gradable $V$-module from a module for a Zhu algebra when $V$ is a vertex operator algebra, is extended here to the setting of $\mathbb{Z}_{\geq 0}$-gradable vertex algebras.  
We then prove two results about the structure of modules for a $\mathbb{Z}_{\geq 0}$-graded vertex algebra that are directly induced from a module for a Zhu algebra or obtained after ``twisting" the first ones by a certain invertible operator. One important property of indecomposable reducible modules is whether they are weakly interlocked, a condition necessary for the existence of well-defined graded pseudo‑traces in the sense of, e.g., \cite{Miy1} or \cite{BBOHY}, and their doubly‑graded analogues in the context of vertex algebras with infinite-dimensional $L_0$-graded spaces but finite-dimensional doubly-graded spaces with respect to another weight-zero operator.  
Graded pseudo-traces are certain $q$-series (or series in two formal variables in the case of doubly-graded finite-dimensional subspaces) associated to a module. 
We prove that modules for a Zhu algebra that fail to be weakly interlocked remain non‑weakly interlocked after induction as vertex algebra modules.

Although the Zhu algebras can be used to induce $\mathbb{Z}_{\geq 0}$-gradable $V$-modules, these modules are generally not enough to form a ``nice" category that has closure under fusion for instance, and/or has certain number-theoretic invariance properties under the action of the modular group $SL_2(\Z)$ on the spaces of graded traces and graded pseudo-traces. 
Twisting the module action by another action is therefore a useful way for constructing new modules --- specifically, ones that cannot be induced from a Zhu algebra --- by acting on modules induced from Zhu algebras. In particular, the action can be twisted by an operator ${\bf \Delta}$ defined by Li in \cite{Li97} that satisfies important properties related to what are called ``simple currents". The most famous ${\bf \Delta}$-operators are the so-called ``spectral flows". 
Simple currents and spectral flow have a long history in conformal field theory due to the relation between spectral flow and orbifold conformal field theory, especially in superconformal settings, cf. \cite{SS, DLM1, MO, LMRS, R, Barron-varna, CR, ACKR, ACK}. 

In this paper, we prove that the property of being weakly interlocked is preserved when twisting the action by an invertible Li's ${\bf \Delta}$-operator. We apply this result to certain modules for the Weyl vertex algebra $\WeylVA$.
Letting $\mathscr{C}_Z$ denote all $\WeylVA$-algebra modules induced from a weight module for a Zhu algebra, we prove that all $\WeylVA$-modules in $\mathscr{C}_Z$ are induced at level zero. 
Moreover, none of the indecomposable reducible $\WeylVA$-modules in $\mathscr{C}_Z$ nor their spectrally flowed images are weakly interlocked. This essentially recovers results previously contained in work by Ridout-Wood \cite{RW} and Allen-Wood \cite{AW}, studying the subcategory $\mathscr{R}$ of $\mathscr{C}_Z$ consisting of weight $\WeylVA$-modules induced from the level zero Zhu algebra but with conformal weights restricted to be real (for an emphasis on physical applications).  
Spectral flows of these modules along with their finite length extensions were studied in \cite{RW, AW} in what they called category $\mathscr{F}$.   
 In \cite{AW}, it is shown that the $\WeylVA$-module category $\mathscr{R}$ is not closed under fusion but that the larger category $\mathscr{F}$ is.
The lack of weakly interlocked reducible weight modules for the Weyl vertex algebras supports the modularity properties studied by Ridout-Wood, i.e., modularity in this setting only relies on doubly-graded traces and not on the notion of doubly-graded pseudo-traces which requires at least weakly interlocked modules in order to be well defined.  

\subsection{Organization and Main Results}

The paper is organized as follows. After introducing the basic definitions, we discuss higher level Zhu algebras and mode transition algebras for $\mathbb{Z}$-graded vertex algebras in Section \ref{MTA-Zhu-section}, showing that the notions and certain results from \cite{Zhu,DLM,He,DGK} still hold in a more general setting than that of vertex operator algebras. 
The main results of Section \ref{MTA-Zhu-section} include Theorem \ref{thm:unity} which gives the structure of the level $d$ Zhu algebra in terms of the $d$th level mode transition algebra and the level $d-1$ Zhu algebra for a $\mathbb{Z}$-graded vertex algebra when the $d$th level mode transition algebra admits a unity. This extends a result of \cite{DGK}.
We also discuss in Theorem \ref{thm:unity} families of strong unities for mode transition algebra in the setting of $\mathbb{Z}$-graded vertex algebras following \cite{DGK}.

In Section \ref{sec:WeylVA}, we start recalling data about the Weyl vertex algebra $\WeylVA$ at central charge $2$.
Then we calculate the level $d$ mode transition algebras $\MTA_d(\WeylVA)$ for $d \in \mathbb{Z}_{\geq0}$ (see Proposition \ref{mode-transition}) and we show that $\MTA_d(\WeylVA)$ admits a unit for each $d$. This allows us to describe all the higher level Zhu algebras for $\WeylVA$ (Corollary \ref{Zhu-alg}). We also show that these unities form a family of strong unities (Theorem \ref{weyl-unity-thm}).  

We turn our attention to modules for $\mathbb{Z}$-graded vertex algebras in Section \ref{modules-section}. 
After some basic definitions, we discuss modules induced from the level $d$ Zhu algebra of a $\mathbb{Z}_{\geq 0}$-graded vertex algebra $V$ (Section \ref{subsec:zhu_correspondence})  and new modules that can be obtained by applying Li's ${\bf \Delta}$-operators such as  spectral flow (Section \ref{subsec:Delta_op}). In Section \ref{subsec:weakly_interlocked}, we introduce the notion of weakly interlocked modules for an associative algebra and for a vertex algebra and obtain two noticeable results. First, we prove that the Zhu induction of a non-weakly interlocked module for a Zhu algebra gives rise to a non-weakly interlocked $V$-module for the corresponding vertex algebra (Theorem \ref{Zhu-weakly-interlocked-thm}). Next, we show that if $W$ is a weak $V$-module and ${\bf \Delta}$ is invertible, then $W$ is weakly interlocked if and only if the corresponding ${\bf \Delta}$-twisted module is weakly interlocked (Theorem \ref{spectral-flow-thm}).

Finally, in Section \ref{Weyl-modules-first-section}, we apply Theorems \ref{Zhu-weakly-interlocked-thm} and \ref{spectral-flow-thm} to the Weyl vertex algebra $\WeylVA$ at central charge $2$, to prove that no indecomposable reducible weight $\WeylVA$-modules constructed via spectral flow of a $\WeylVA$-module are weakly interlocked.

\subsection{Acknowledgements} 
The authors gratefully acknowledge the support provided by a Simons Foundation Travel Support Grant, a University of Melbourne Establishment Grant and an Andrew Sisson Support Package (2025), the U.S.\ National Science Foundation Grant, an AMS--Simons PUI Faculty Grant, and a grant from the College of Arts and Sciences at Illinois State University, as well as the hospitality of the University of Colorado Boulder and the Institut de Math\'ematiques de Bourgogne. This project was initiated through the Women in Mathematical Physics program, and the authors are sincerely grateful to Ana Ros Camacho for her dedication to organizing the program.

\subsection{Preliminaries}
 We start by recalling some properties of vertex algebras. We refer the reader, for instance, to \cite{LL,LY} for additional details.

\begin{defn}\phantom{x}
\begin{enumerate}[leftmargin=*,label=(\roman*)]
    \item A \emph{$\Z$-graded vertex algebra} consists of a vertex algebra $V$ with a grading
\[V=\bigoplus_{m \in \mathbb{Z}}V_{m} \]
satisfying $\vac \in V_0$ and 
\begin{align*}
u_{n} V_{m}\subset V_{k+m-n-1},    
\end{align*}
for $u \in V_k$, 
where $Y(u,z) = \sum_{n \in \Z} u_n z^{-n-1}$.

\item If there exists $M \in \Z_{\leq 0}$ such that $V_m = 0$ for $m < M$, we call $V$ a \emph{$\Z_{\geq 0}$-gradable vertex algebra},  where $\Z_{\geq 0} = \{ n \in \mathbb{Z} \; | \; n \geq 0 \}$.
\end{enumerate}
\end{defn}

\begin{rem}\phantom{x}
\begin{enumerate}[label=(\roman*),leftmargin=*]
    \item Any vertex algebra $V = (V, Y, {\bf 1})$ is endowed with the linear translation operator $T: V\to V$ determined by $T (u)=u_{-2}\vac$.
    \item For any $\Z$-graded vertex algebra $V$, we can define the \emph{grading operator} $\Delta$ as the linear map $\Delta: V\to V$ determined by $\Delta (u)=ku$ for $u\in V_k$.
    An element $u\in V_k$ is said to be homogeneous and we set $\wt (u)=k$. When the notation $\wt(u)$ is used, then $u$ is assumed to be homogeneous.
    \item We used the term ``$\Z_{\geq 0}$-gradable" instead of ``$\Z_{\geq 0}$-graded" above to stress
    that if $V$ has finitely many negative graded spaces, a shift can be used to give $V$ a $\Z_{\geq 0}$-grading (in which there are no negative weight spaces). More precisely, if $V=\bigoplus_{m\in \Z_{\geq M}}V_m$, setting $V(j):=V_{M+j}$ for $j\geq 0$ yields to the $\Z_{\geq 0}$-grading 
\begin{equation*}
    V=\bigoplus_{j\in \Z_{\geq 0}}V(j).
\end{equation*}
\end{enumerate}
\end{rem}

\begin{defn} 
A \emph{$\Z$-graded conformal vertex algebra} $(V, Y, \vac, \omega)$ consists of a $\Z$-graded vertex algebra
together with a distinguished vector $\omega \in V_2$ whose modes $Y(\omega, z)=\sum_{n\in \mathbb{Z}} L_{n} z^{-n-2}$
satisfy the following properties: 
\begin{enumerate}[label=(\roman*),leftmargin=*]
\item the Virasoro relations: for any $n,m\in \Z$,
$$[L_{n},L_{m}]=(n-m)L_{m+n}+\frac{1}{12}(n^3-n)\delta_{n,-m}c,$$ where $c\in \mathbb{C}$ is the \emph{central charge} of $V$, 
    \item the $L_{-1}$-derivative property: for any $v\in V$,
			$Y(L_{-1}v,z)=\dfrac{d}{dz} Y(v,z),$
   \item the $L_{0}$-grading property: for $m \in \Z$ and $v\in V_{m}$,
			    $L_{0}v=m v=(\wt(v))v$, and we call $m = \wt(v)$ the {\it conformal weight of $v$}.
\end{enumerate}
 If in addition $(V,Y, {\bf 1} , \omega)$ is $\Z_{\geq 0}$-gradable with respect to the $L
_0$-grading, we call $V$ a \emph{$\Z_{\geq 0}$-gradable conformal vertex algebra} (still with possibly negative conformal weight spaces).
\end{defn}

\begin{defn} A \emph{vertex operator algebra} $(V,Y,\vac,\omega)$ is a  $\Z_{\geq 0}$-gradable conformal vertex algebra 
\[V=\bigoplus_{m \in \mathbb{Z}}V_{m}\]
such that $\dim V_{m}<\infty$ for $m\in \Z.$
\end{defn}

\section{Higher level Zhu algebras and mode transition algebras for \texorpdfstring{$\Z$}{Z}-graded vertex algebras}\label{MTA-Zhu-section}

In this section we recall various algebras related to a vertex operator algebra, and recall (or define when necessary) their extensions to $\mathbb{Z}$-graded vertex algebras.  

 We first recall the notions of the Zhu algebras as defined originally in \cite{Zhu} and \cite{DLM} for vertex operator algebras, and their realization given  in  \cite{He} as quotients of a certain universal enveloping algebra. We show that these notions extend naturally to the setting of $\mathbb{Z}$-graded vertex algebras. Then following \cite{DGK}, we recall the notion of the mode transition algebra in the context of $\mathbb{Z}$-graded vertex algebras and explain its relation to the higher level Zhu algebras.

Throughout this section, we fix $V = \coprod_{n \in \mathbb{Z}}V_n$ to be a $\mathbb{Z}$-graded vertex algebra with possibly infinite-dimensional graded spaces. 

\subsection{Zhu algebras associated to  a \texorpdfstring{$\Z$}{Z}-graded vertex algebra} 
In this subsection, we recall the definition of the level $n$ Zhu algebra ($n \in \Z_{\geq 0}$), denoted by $\hZhu{n}(V)$.
The level $0$ Zhu algebra, simply called the Zhu algebra, was introduced by Zhu in \cite{Zhu} for $\mathbb{Z}_{\geq 0}$-graded vertex operator algebras. 
Then, the definition was generalized by Dong--Li--Mason to the level $n$ Zhu algebra for $n >0$ in \cite{DLM} for any vertex operator algebra.  
In these works, it was shown that if $V$ is a vertex operator algebra, then $\hZhu{n}(V)$ can be endowed with a structure of unital associative algebra.
Although in both \cite{Zhu} and \cite{DLM}, $V$ is assumed to be a vertex operator algebra, the construction of $\hZhu{n}(V)$ can be extended beyond this case,  as discussed for instance in  \cite{KDS,Eke11,EH19}. In this subsection, we consider the construction for general $\Z$-graded vertex algebras,  that is, without any restrictions on the number of graded weight spaces with negative conformal weight or on the dimension of their weight spaces.

\begin{defn}\label{zhu-alg-defnition}
Let $V=\bigoplus_{m\in\Z}V_m$ be a $\mathbb{Z}$-graded vertex algebra. 
For $n \in \mathbb{Z}_{\geq 0}$, let $O_n(V)$ be the subspace of $V$ spanned by elements of the form
\begin{equation}\label{elements-in-O} 
u \circ_n v :=
\res_x \frac{(1 + x)^{\wt(u) + n}Y(u, x)v}{x^{2n+2}},\qquad u,v\in V,
\end{equation}
and by elements of the form 
\begin{equation}\label{eq:derivation_relation}
    (T + \Delta)v,\qquad v\in V,
\end{equation}
where $T$ and $\Delta$ are respectively the translation and the grading operators on $V$. 
The vector space $\hZhu{n}(V)$ is defined to be the quotient space $V/O_n(V)$.
\end{defn}

For $n\geq0$, consider the following operation $ *_n $ on $V$ defined by linearity with
\[
u *_n v = \sum_{m=0}^n(-1)^m\binom{m+n}{n}\res_x \frac{(1 + x)^{\wt(u) + n}Y(u, x)v}{x^{n+m+1}}, \qquad u,v\in V.
\]

\begin{prop}[{\cite{Zhu,DLM,KDS}}]  If $V$ is a $\mathbb{Z}$-graded vertex algebra, then for $n \in \mathbb{Z}_{\geq 0}$, the space $O_n(V)$ is a two-sided ideal of $V$ with respect to $*_n$, and the quotient $\hZhu{n}(V) = V/O_n(V)$ is a unital associative algebra  with respect to $*_n$ and ${\bf 1}$, called the \emph{level $n$ Zhu algebra}.
\end{prop}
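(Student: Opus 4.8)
The plan is to adapt the classical arguments of Zhu and Dong--Li--Mason (and their refinements in \cite{KDS}) to the $\Z$-graded setting, checking at each step that nothing used depends on the conformal structure, the finite-dimensionality of weight spaces, or the non-negativity of the grading. First I would establish the basic ``truncation" identities: for homogeneous $u$ and any $v$, the element $u\circ_n v$ has components only in finitely many degrees because $Y(u,x)v\in V((x))$; this is what makes the residue well-defined, and it uses only the lower-truncation axiom of a vertex algebra, so it survives verbatim. Then I would record the key commutator-type formula relating $u*_n v - v*_n u$ to an element of $O_n(V)$ modulo $(T+\Delta)$-terms, as well as the associativity formula $(u*_n v)*_n w - u*_n(v*_n w)\in O_n(V)$. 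The standard proofs of these proceed by manipulating the Jacobi identity (or the Borcherds/iterate identities) together with the $L_{-1}$-derivative property; here one must replace every appearance of $L_{-1}$ by $T$ and every appearance of $L_0$ (through $\wt$) by $\Delta$, using that $Y(Tu,x)=\frac{d}{dx}Y(u,x)$ and that $\Delta$ is a derivation of all products $u_n$ in the graded sense encoded by $u_n V_m\subset V_{k+m-n-1}$. These two structural facts are exactly what make the relation $(T+\Delta)v\in O_n(V)$ play the role that $(L_{-1}+L_0)v\in O_n(V)$ plays in the VOA case.

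The proof then splits into three parts, matching the three assertions. \textbf{(1) $O_n(V)$ is a left ideal for $*_n$:} show $v*_n(u\circ_n w)\in O_n(V)$ and $v*_n((T+\Delta)w)\in O_n(V)$ for all $v$. \textbf{(2) $O_n(V)$ is a right ideal:} the symmetric statements $(u\circ_n w)*_n v\in O_n(V)$ and $((T+\Delta)w)*_n v\in O_n(V)$. In the VOA literature one of these is typically deduced from the other using the skew-symmetry formula $Y(u,z)v = e^{zL_{-1}}Y(v,-z)u$; in our setting the skew-symmetry $Y(u,z)v=e^{zT}Y(v,-z)u$ holds for any vertex algebra, so the same reduction applies. \textbf{(3) Associativity and the unit:} once $O_n(V)$ is a two-sided ideal, one checks $\mathbf{1}*_n v = v = v*_n\mathbf{1}$ modulo $O_n(V)$ (immediate from $Y(\mathbf 1,x)=\id$ and the binomial identity $\sum_{m=0}^n(-1)^m\binom{m+n}{n}=\ \cdots$), and then verifies associativity of $*_n$ on the quotient via the formula above.

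Concretely, I would carry out the key computations using the identity, valid for homogeneous $u$ and all $v,w$ and all $p\ge 0$,
\[
\res_x\frac{(1+x)^{\wt(u)+n+p}\,Y(u,x)v}{x^{2n+2+p}}\ \in\ O_n(V),
\]
obtained by repeatedly multiplying $u\circ_n v$ by $(1+x)/x$ and absorbing the extra factor, together with its companion obtained by using $(T+\Delta)$ to shift weights; these generalized relations are the workhorses of all three ideal computations. For the bimodule-type manipulations I would expand $(1+x)^{a}(1+y)^{b}$ in the iterate identity
\[
Y(Y(u,x-y)v,y)w\ =\ Y(u,x)Y(v,y)w\ -\ (\text{correction terms in }x,y),
\]
take appropriate residues in $x$ and $y$, and match powers; this is exactly the Zhu/DLM bookkeeping, with $\wt$ and $T$ in place of $L_0$ and $L_{-1}$.

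The main obstacle I anticipate is purely organizational rather than conceptual: there is no single clean lemma to invoke, so one must re-examine each step of the (rather long) original proofs and confirm that the only properties used are (a) the lower-truncation and Jacobi/iterate identities of a vertex algebra, (b) $Y(Tu,x)=\partial_x Y(u,x)$, and (c) the compatibility $\Delta u_n = (u_n\Delta + (\wt u - n - 1)u_n)$ on homogeneous vectors, i.e. that $T+\Delta$ behaves in sums and iterates exactly as $L_{-1}+L_0$ does. A secondary subtlety is that, with infinitely many negative-weight spaces and infinite-dimensional $V_m$, one should make sure every infinite sum that appears is actually a finite sum after taking the residue — but this follows from lower-truncation applied to $Y(u,x)v$ for fixed $u,v$, so no convergence issue arises. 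Since the cited references \cite{Zhu,DLM,KDS} already contain the argument in essentially this generality (indeed \cite{KDS} works with vertex algebras lacking a conformal vector), the proof reduces to quoting their computations and noting the substitutions $L_{-1}\rightsquigarrow T$, $L_0\rightsquigarrow\Delta$; I would present it in that spirit, spelling out only the unit axiom and the one or two identities where the absence of grading restrictions could conceivably matter.
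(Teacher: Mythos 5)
The paper offers no independent proof of this proposition: it is cited to \cite{Zhu,DLM,KDS}, and the surrounding discussion identifies the intended argument as exactly the one you describe, namely rerunning the Dong--Li--Mason computations with $L_{-1}\rightsquigarrow T$ and $L_{0}\rightsquigarrow\Delta$ and replacing finite binomial expansions by Taylor expansions when $\wt(u)+n<0$, with lower truncation of $Y(u,x)v$ guaranteeing all residues are finite sums. Your proposal is correct and takes essentially the same approach, including the right workhorse lemma and the use of skew-symmetry $Y(u,z)v=e^{zT}Y(v,-z)u$ for the right-ideal half.
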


When $V = (V, Y, {\bf 1}, \omega)$ is a vertex operator algebra with non-negative weights, then, 
$\hZhu{0}(V)$ was first defined in \cite{Zhu} and, for any vertex operator algebra,  $\hZhu{n}(V)$ for $n\geq 0$ was defined in  \cite{DLM}, with $T = L_{-1} = \omega_0$ and $\Delta = L_{0} = \omega_{1}$.  
In \cite{KDS}, it was noticed that this definition can be extended to any vertex algebra which is $\Z$-graded by a Hamiltonian, for instance by replacing finite polynomials with Taylor expansions when necessary. Indeed the definition of $\Zhu_n(V)$ for $V$ a vertex operator algebra as given in \cite{DLM} is still valid if one replaces $L_{-1}$ by the translation operator and $L_{0}$ by the grading operator as in the definition of a $\mathbb{Z}$-graded vertex algebra. 
For instance, in \cite{L1}, the (level zero) Zhu algebra for the Weyl conformal vertex algebra at central charge $c = 2$ (see Subsection \ref{subsectionWeyl} below) was calculated, using the fact that the notion of Zhu algebra is well defined for a $\mathbb{Z}_{\geq 0}$-gradable conformal vertex algebra that is not a vertex operator algebra. 
We refer to \cite{BBOHPTY}  for the reader interested in results on the Zhu algebra of the Weyl vertex algebra for other choices of conformal element. 

\begin{rem}\label{rem:virasoro_rel_in_zhu}
When $V$ is a vertex operator algebra, since 
$$v \circ_0 \mathbf{1} = v_{-2} \mathbf{1} + (\wt( v)) v = L_{-1} v + L_{0} v,$$
it follows that $O_0(V)$ is spanned by elements of the form \eqref{elements-in-O} and $\hZhu{0}(V)$ coincides with the original definition of the Zhu algebra $\Zhu(V)$ \cite{Zhu}.
This is not necessarily true anymore for $n>0$.
In particular, in \cite{AB-generaln}, specific instances for when elements of the form $L_{-1}v + L_{0}v$ can not be written as a linear combination of elements of the form \eqref{elements-in-O} are given.
These cases give new relations in $\hZhu{n}(V)$ that are not captured by the relations arising from the span of elements of the form \eqref{elements-in-O}.  The remark also holds if $V$ is a $\Z$-graded vertex algebra where the elements of the form \eqref{eq:derivation_relation} are not necessarily in the space spanned by elements of the form \eqref{elements-in-O}. 
\end{rem}

\subsection{The Lie algebra of a \texorpdfstring{$\Z$}{Z}-graded vertex algebra, filtrations, universal enveloping algebras, and completions}\label{completion-subsection}

Following \cite{DGK}, we discuss various filtrations of interest for studying higher-level Zhu algebras.  We will use the following notations:
\begin{eqnarray*}
\mathbb{C}[[t,t^{-1}]] &=& \bigg\{ \sum_{n\in\mathbb{Z}} a_n t^n \; | \; a_n \in \mathbb{C} \bigg\},\\
\mathbb{C}\pp{t} &=& \bigg\{ \sum_{n \in \mathbb{Z}} a_n t^n \, | \, a_n = 0 \mbox{ for all but a finite number of $n<0$} \bigg\}\\
\mathbb{C}[t, t^{-1}] &=& \bigg\{ \sum_{n \in \mathbb{Z}} a_n t^n \, | \, a_n = 0 \mbox{ for all but a finite number of $n$}\bigg\}.
\end{eqnarray*}
We call the first ring {\it formal series in $t$ and $t^{-1}$}, the second ring {\it formal Laurent series in $t$,} and the last set {\it formal Laurent polynomials in $t$}.

First note that the space of Laurent polynomials $\mathbb{C}[t, t^{-1}]$ is $\mathbb{Z}$-graded by $\mathbb{C}[t,t^{-1}]_n=\mathbb{C}t^{-n-1}$.
Thus, the space of Laurent series $\mathbb{C}\pp{t}$ has a left (i.e., increasing) filtration with respect to this grading, given by $\mathbb{C}\pp{t}_{\leq n}= t^{-n-1} \mathbb{C}[[t]]$.
This filtration is exhaustive and is a split filtration in the sense of \cite[Appendix A]{DGK}. 
Similarly, there is a right (i.e., decreasing) exhaustive split filtration of $\mathbb{C}\pp{t^{-1}}$ given by $\mathbb{C}\pp{t^{-1}}_{\geq n} = t^{-n-1}\mathbb{C}\bb{t^{-1}}$.  
Of course $\mathbb{C}[t,t^{-1}]$ is both left and right exhaustively filtered as a subring of these left and right filtrations of $\mathbb{C}\pp{t}$ and $\mathbb{C}\pp{t^{-1}}$ above.  However, we note that $\mathbb{C}\bb{t, t^{-1}}$ does not admit an exhaustive left or right filtration that restricts to either of these filtrations, respectively. 

Let $V$ be a $\mathbb{Z}$-graded vertex algebra.  Then $V$ has left and right exhautive filtrations given by $V_{\leq n} = \coprod_{k \leq n} V_k$ and $V_{\geq n} = \coprod_{k \geq n} V_k$, respectively.  
Moreover, the space $V \otimes_{\mathbb{C}} \mathbb{C}\pp{t}$ admits a left filtration with respect to the left filtration of the tensor factors.  
In particular, this filtration endows the algebra with a structure of graded algebra with respect to the \emph{degree} grading of $v \otimes t^n$ defined by
\begin{equation}\label{degree} 
\deg (v \otimes t^n) = \wt(v) -n-1.
\end{equation}
More precisely, the associated graded space is defined by
\[
\gr \, (V \otimes \mathbb{C}\pp{t} ) = \bigoplus_{n \in \mathbb{Z}} (V \otimes \mathbb{C}\pp{t}_{\leq n}) / (V \otimes \mathbb{C}\pp{t}_{\leq n - 1})
\]
and then we have the graded subspaces 
\[ (V \otimes \mathbb{C}\pp{t})_n = \Span_\mathbb{C} \{ v \otimes t^{\wt(v)-n - 1}  \; | \; v \in V \, \text{homogeneous} \}\subset V \otimes \mathbb{C}[t,t^{-1}].\]
This implies that the left filtration on $V\otimes \mathbb{C}\pp{t}$ is a split filtration in the sense of \cite{DGK} with respect to the graded subspace $V \otimes \mathbb{C}[t,t^{-1}]$.  Similarly we can define the graded structure on $V \otimes \mathbb{C}\pp{t^{-1}}$ which is also a split right filtration, again with respect to the graded subspace $V \otimes \mathbb{C}[t,t^{-1}]$.  

Define the linear endomorphism of degree one 
\begin{equation*}
    D=T\otimes \id+\id \otimes \frac{d}{dt}
\end{equation*} 
on $V \otimes \mathbb{C}\pp{t}$, $V \otimes \mathbb{C}\pp{t^{-1}}$ and $V\otimes_{\mathbb{C}}\mathbb{C}[t,t^{-1}]$ respectively
and consider the three quotient spaces
\begin{equation*}
\mathfrak{L}(V)^L=\frac{V\otimes_{\mathbb{C}}\mathbb{C}\pp{t}}{\Ima(D)}, \quad \mathfrak{L}(V)^R=\frac{V\otimes_{\mathbb{C}}\mathbb{C}\pp{t^{-1}}}{\Ima(D)}, \quad \mathfrak{L}(V)^f=\frac{V\otimes_{\mathbb{C}}\mathbb{C}[t,t^{-1}]}{\Ima(D)}.
\end{equation*}
They are left split-filtered, right split-filtered, and graded, respectively. 
Moreover, they admit filtered and graded Lie algebra structures with respect to the Lie bracket
\begin{equation}\label{commutator-larger}  
[u \otimes f(t) ,v\otimes g(t)]=\sum_{j\geq 0}\frac{1}{j!}(u_jv) \otimes g(t) \frac{d^j}{dt^j} f(t),
\end{equation}
for all $u,v\in V$, and $f(t), g(t) \in\mathbb{C}\pp{t}$ or $\mathbb{C}\pp{t^{-1}}$.
Note that the space $\mathfrak{L}(V)^f$ is usually denoted $\hat{V}$ in the setting of vertex operator algebras (eg. \cite{DLM, He}).  

For $v\in V$ and $m\in\Z$, denote by $v(m)$ the image of $v\otimes t^m$ in $\mathfrak{L}(V)^f$, i.e.
\[v(m) = v \otimes t^m + D(V \otimes \mathbb{C}[t,t^{-1}]).\]
Then the Lie bracket \eqref{commutator-larger} restricted to $\mathfrak{L}(V)^f$ simplifies as
\begin{equation*}
[u(m),v(n)]=\sum_{j\geq 0}{\binom{m}{j}}(u_jv)(m+n-j),\qquad u,v\in V,\ m,n\in\Z.
\end{equation*}

Continuing to follow \cite[Appendix A]{DGK} extended to the $\mathbb{Z}$-graded vertex algebra setting, we let $U^L$, $U^R,$ $U$ denote the universal enveloping algebras of $\mathfrak{L}(V)^L$,  $\mathfrak{L}(V)^R$, $\mathfrak{L}(V)^f$, respectively. Since these enveloping algebras are left filtered, right filtered, and graded respectively, $(U^L, U^R, U)$ forms a good triple of associative algebras and we can define canonically a good seminorm associated with the system of neighborhoods
\[
\NL^n U^L = U^L U^L_{\leq -n},  
\qquad \NR^n U^R = U^R_{\geq n}U^R, 
\qquad n \in \mathbb{Z},
\]
 that is, $\NL$ and $\NR$ define almost canonical split-filtered seminorms on $U^L$ and $U^R$ respectively such that $\NL^n U_p=\NR^{n+p} U_p$.
As left ideals of $U$ \cite[Lemma 2.4.2]{DGK}, 
\[
\NL^n U^L = U^L \mathfrak{L}(V)^L_{\leq -n},
\qquad \NL^n U := UU_{\leq -n}= U\mathfrak{L}(V)^f_{\leq -n},
\]
while as right ideals
\[
\NR^nU^R = \mathfrak{L}(V)^R_{\geq n}U^R, 
\qquad \NR^nU := U_{\geq n}U= \mathfrak{L}(V)^f_{\geq n}U,
\]
for all $n \in \mathbb{Z}$. 
Taking the completion with respect to this system of neighborhoods, we obtain the completions $\hat{U}^L$, $\hat{U}^R$, and $\hat{U}$, respectively, and $(\hat{U}^L, \hat{U}, \hat{U}^R)$ is again a good triple of associative algebras with good seminorms (see \cite[Corollary A.9.10]{DGK}). 

The universal enveloping algebra $U=\bigoplus_{n\in\Z}U_n$ inherits a natural $\Z$-grading from the Lie algebra $\mathfrak{L}(V)^f$ defined by the degree as in Eqn. \eqref{degree}. 
The zero component $U_0$ contains the universal enveloping algebra  $U(0)$  of the Lie subalgebra $\mathfrak{L}(V)^f(0)$ of $\mathfrak{L}(V)^f$ spanned by homogeneous vectors of zero degree. 
For $n\in\mathbb{Z}$ and $k\in\mathbb{Z}_{\leq 0}$, let 
\begin{equation}\label{eq:filtration}
U^k_n:= \NL^{-k}U_{n} = \sum_{i\leq k} U_{n-i}U_i\quad\text{and}\quad U(0)^k:=U(0)\cap U^k_0,
\end{equation}
which define increasing  filtrations (with respect to $k$) on $U_n$ and $U(0)$ respectively.

\subsection{The left, right, and finite universal enveloping algebras of a \texorpdfstring{$\Z$}{Z}-graded vertex algebra}\label{universal-lie-section}

Let $V$ be a $\mathbb{Z}$-graded vertex algebra, and let $(\hat{U}^L, \hat{U}, \hat{U}^R)$ be the good triple of completions of the universal enveloping algebras of the Lie algebras $\mathfrak{L}(V)^L, \mathfrak{L}(V)^f, \mathfrak{L}(V)^R$ constructed in Section \ref{completion-subsection}, along with good seminorms.

Continuing to follow and extend \cite[Appendix A]{DGK}, as well as \cite{He}, we let $\mathscr{J}^L, \mathscr{J}$, and $\mathscr{J}^R$ be the two-sided graded ideals of $\hat{U}^L, \hat{U}$ and $\hat{U}^R$ respectively, generated by 
\begin{equation}\label{vacuum}
{\bf 1}(-1) = 1_{\hat{U}}  
\end{equation}
and 
\begin{multline}\label{Jacobi-relations} 
    \sum_{i \geq 0} (-1)^i \binom{\ell}{i} \left(u(m+\ell-i)v(n+i)-(-1)^\ell v(n+\ell-i)u(m+i)\right)\\
=\sum_{i \geq 0} \binom{m}{i} (u_{\ell+i}v)(m + n- i),
\end{multline}
for all $u,v\in V$, $m,n,\ell\in\Z$.
Then $(\mathscr{J}^L, \mathscr{J}, \mathscr{J}^R)$ is a good triple of ideals by \cite[Lemma A.9.5]{DGK}.  
Furthermore, since it arises from a canonical seminorm, the seminorm on the good triple $(\hat{U}^L, \hat{U}, \hat{U}^R)$ is \emph{tight} \cite[Remark A.6.11]{DGK} and thus, the respective closure of $\mathscr{J}^L, \mathscr{J}, \mathscr{J}^R$ with respect to this tight seminorm, give a good triple of ideals, denoted $(\overline{\mathscr{J}}^L, \overline{\mathscr{J}}, \overline{\mathscr{J}}^R)$ \cite[Lemma A.9.6]{DGK}.

By \cite[Corollary A.9.9]{DGK}, the quotient spaces \[ \mcU^L = \hat{U}^L/\overline{\mathscr{J}}^L, \quad \mcU = \hat{U}/\overline{\mathscr{J}}, \quad \mcU^R = \hat{U}^R/\overline{\mathscr{J}}^R\]
form a good triple of associative algebras with good seminorms.  We call these associative algebras the \emph{left, finite} and \emph{right universal enveloping algebras of $V$}, respectively.

\begin{rem}
For $\mathbb{Z}_{\geq 0}$-graded vertex operator algebras, $\mcU$ coincides  
with the universal enveloping algebra of $V$ introduced by Frenkel and Zhu \cite{FZ} -- and denoted $U(V)$ in \cite{He}, while as noted in \cite{DGK}, we can identify $\mcU^L$ with the current algebra introduced by Nagatomo and Tsuchiya \cite{NT} or with the universal enveloping algebra $\widetilde{\mathscr{U}}(V)$ introduced in \cite{FLanglands} with a minor modification. 
\end{rem}
It is helpful to visualize the following maps, where the second and the third maps are embedding and projection, respectively,
\begin{align*}
V&\longrightarrow  \ \  \mathfrak{L}(V)^f &&\longhookrightarrow \hat{U}  &&  \longtwoheadrightarrow\ \mcU \\
 v &\longmapsto \ v(\wt(v)-1) &&\longmapsto (v(\wt(v)-1)).1  &&\longmapsto  (v(\wt(v) - 1)).1 +\overline{\mathscr{J}} \\
 \vac &  \longmapsto \ \vac(-1) &&\longmapsto  \vac(-1) &&\longmapsto  1 + \overline{\mathscr{J}}\\
& \ \ \ \ \ \ \ \   {\bf 1}(j) = \delta_{j, -1}{\bf 1}(-1)  && \longmapsto \delta_{j, -1}{\bf 1}(-1) && \longmapsto \delta_{j, -1} + \overline{\mathscr{J}}. 
\end{align*}

\begin{rem}
In \cite{He} (in the context of vertex operator algebras) and other places in the literature, there are three families of relations defining $\mathscr{J}$.  
However, the two relations \eqref{vacuum} and \eqref{Jacobi-relations} are sufficient. In particular, the relation that is often given
\begin{equation}\label{he-vaccum}
{\bf 1} (j) =  {\bf 1}\otimes t^j 
+ D(V \otimes \mathbb{C}[t, t^{-1}])= 0, \qquad j\neq-1,
\end{equation}
is already a relation in $\hat{U}$ since 
\begin{equation*}
D ({\bf 1}(j+1)) = T({\bf 1})\otimes t^{j+1}+{\bf 1}\otimes \frac{d}{dt}t^{j+1} =  (j+1){\bf 1}\otimes t^{j} \equiv 0. 
\end{equation*}

Another family of relations that is often specified in the literature is the Virasoro relations if $V$ has a conformal structure.  But again, if present in $V$, then these relations are already present in $\mathfrak{L}(V)^f$, and so in $\hat{U}$ and $\mcU$.  
Recall (see for instance \cite{LL}) that the Virasoro relations are equivalent to 
\begin{eqnarray*}
 \omega_0 \omega &=& L_{-1} \omega,\\
 \omega_1 \omega &=& 2L_{-2} {\bf 1} = 2\omega,\\
 \omega_2\omega &=& 0,\\
 \omega_3 \omega &=& \frac{1}{2} c {\bf 1},\\
 \omega_n\omega &=& 0 \quad \mbox{for $n\geq 4$.}
\end{eqnarray*}
Then in $\mathfrak{L}(V)^f$ and $\hat{U}$, we have 
\begin{eqnarray*}
\lefteqn{[\omega(m +1), \omega (n+1)] =\sum_{i\geq 0}{m+1\choose i} (\omega_i\omega)(m + n + 2 -i)} \\
&=&(L_{-1}\omega)(m+n+2)+{m+1\choose 1}2\omega(m+n+1)+{m+1\choose 3}\frac{1}{2} c \ {\bf 1}(m+n-1)\\
&=&-(m+n+2)\omega(m+n+1)+(m+1)2\omega(m+n+1)+{m+1\choose 3}\frac{1}{2} c \delta_{m,-n} \ {\bf 1}(-1)\\
&=&(m-n)\omega(m+n+1)+\frac{(m+1)m(m-1)}{12} c \delta_{m,-n} \ {\bf 1}(-1).
\end{eqnarray*}
Thus, using the relation ${\bf 1} (-1) + \mathscr{J} = 1$,  we recover the Virasoro relations in $\mcU$ as well.
\end{rem}

\subsection{Zhu algebras as quotients of the universal enveloping algebra}\label{He-section}

In \cite{He}, He proves that when $V$ is a vertex operator algebra, $\hZhu{n}(V)$ is isomorphic to a quotient of the universal enveloping algebra of $V$. 

In \cite{He}, the universal enveloping algebra $\mcU$ of a vertex operator algebra $V$ is denoted $U(V)$. 
Moreover, He shows that if $V$ is a vertex operator algebra then
    \begin{equation}\label{he-isomorphism}
        \HeZhu{n}(V):=U(V)_0/U({V})^{-n-1}_0\cong \hZhu{n}(V), 
    \end{equation}
for all $n\in\Z_{\geq0}$, i.e., that the quotient algebra $\HeZhu{n}(V)$ is canonically isomorphic to the level $n$ Zhu algebra $\hZhu{n}(V)$\footnote{
  There are typos in the indexing of the modes in \cite[Corollary A.2]{He}, both in the last three lines of the equation in the proof of Corollary A.2 and in the second line of the equation giving the result. Corollary A.2 is used in the proof of Lemma 3.1, which still holds despite the indexing typos, and thus the result given here as Eqn. \eqref{he-isomorphism} still holds for vertex operator algebras}.
With our notation (and the notation of \cite{DGK}), Eqn. \eqref{he-isomorphism} reads 
    \begin{equation}\label{he-isomorphism-DGK}
        \HeZhu{n}(V):=\mcU_0/ \mathrm{N}^{n+1} \mcU_0 \cong \hZhu{n}(V) 
    \end{equation} 
where $\mathrm{N}^{n+1} \mathscr{U}_0 := \NL^{n+1} \mathscr{U}_0 = \NR^{n+1} \mathscr{U}_0$.

This fact was presented in \cite{DGK} for $V$ a vertex operator algebra, where the notion of Zhu algebras was also generalized to $\mathbb{Z}_{\geq 0}$-graded seminormed unital algebras.
An explicit isomorphism between the level $n$ Zhu algebra $\hZhu{n}(V)$ and the zero mode algebra $\HeZhu{n}(V)$ is given by the map
    \begin{eqnarray}\label{he-isomorphism-map}
        \varphi: \hZhu{n}(V) &\longrightarrow& \mcU_0/\mathrm{N}^{n+1} \mcU_0\\
        v + O_n(V) &\longmapsto & v(\wt(v) - 1)+ \mathrm{N}^{n+1} \mcU_0. \nonumber 
    \end{eqnarray}
In particular, one can identify $\hZhu{0}(V)$ with $\HeZhu{0}(V)=\mcU_0/\mathrm{N}^{1} \mcU_0$.

\begin{rem}\label{zero-mode-remark}
 The realization of the (level zero) Zhu algebra as the algebra of zero modes of $V$ was the main motivation behind its definition \cite{Zhu, FZ}. The identification $\hZhu{0}(V)\cong \HeZhu{0}(V)$ has appeared in the literature before He's work in \cite{EG} (see also \cite[Appendix~B]{RW1}).    
\end{rem}

As mentioned in Remark \ref{rem:virasoro_rel_in_zhu}, the original definition of $\hZhu{n}(V)$, for $n>0$, required moding out by the relation $(L_{-1} + L_0)v=0$ in a vertex operator algebra, or equivalently by $(T + \Delta)v = 0$ in a $\mathbb{Z}$-graded vertex algebra. 
Thus, one might worry whether the higher level Zhu algebra realized as the quotient $\HeZhu{n}(V)$ of the universal enveloping algebra of $V$ does contain the analogous relation.
It does, and we check this by  computing the image of  $(T+\Delta)v$ through the explicit isomorphism \eqref{he-isomorphism-map}:
\begin{eqnarray*}
 (T v)(\wt(Tv)-1) + (\Delta v)(\wt(\Delta v)-1) 
&=& (T v)(\wt(v)) + (\Delta v)(\wt(v)-1) \\
&=& (Tv)\otimes t^{\wt(v)}+\wt(v)\, v\otimes t^{\wt(v)-1}\\
&=& -\wt(v)\, v\otimes t^{\wt(v) -1}+\wt(v)\, v\otimes t^{\wt(v)-1} \\
&= & 0 \in \mcU
\end{eqnarray*} 
since we have $T v\otimes t^{\wt(v)}+v\otimes \frac{d}{dt}t^{\wt(v)}=0$ in $\mcU$.
More importantly, this leads to naturally considering the right-hand side of the isomorphism \eqref{he-isomorphism} for vertex algebras without a conformal element.
This framework, for instance, is referenced in \cite{DGK}, where they develop notions of Zhu algebras and mode transition algebras in the general context of $\mathbb{Z}_{\geq 0}$-graded algebras, where the Zhu algebras are studied in the spirit of He's approach, i.e., via algebras of modes of $V$ in the universal enveloping algebra of $V$ rather than as quotient spaces of $V$ as in \cite{Zhu,DLM}.

In the following Proposition, we note that the realization of the level $n$ Zhu algebra via modes of $V$ in the universal enveloping algebra of $V$ can be extended to the case when $V$ is a $\mathbb{Z}$-graded vertex algebra.

\begin{prop}\label{define-Zhu-alg-prop}  
Let $V$ be a $\mathbb{Z}$-graded vertex algebra. Then the construction in this section of $\mcU$ along with the gradings and filtrations  is  well defined. Moreover
 for $n \in\mathbb{Z}_{\geq 0}$, the quotient space 
$\HeZhu{n}(V):=\mcU_0/\mathrm{N}^{n+1} \mcU_0$ is a well-defined associative algebra that is isomorphic to the level $n$ Zhu algebra $\hZhu{n}(V) = V/O_n(V)$.  That is 
\[
\HeZhu{n}(V) = \mcU_0/\mathrm{N}^{n+1} \mcU_0  \cong \hZhu{n}(V) = V/O_n(V).
\]
\end{prop}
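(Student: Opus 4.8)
The plan is to reduce Proposition \ref{define-Zhu-alg-prop} to He's theorem (the isomorphism \eqref{he-isomorphism}) by checking that every ingredient in He's construction and proof that is usually justified by the vertex operator algebra axioms — finite-dimensionality of graded pieces, non-negativity of the grading, or the presence of a conformal vector — is in fact only used through properties that hold for an arbitrary $\mathbb{Z}$-graded vertex algebra. Concretely, I would first verify the well-definedness claims: that $\mathfrak{L}(V)^f$ (and its left/right cousins) is a well-defined $\mathbb{Z}$-graded Lie algebra under \eqref{commutator-larger}, which follows from the Borcherds/Jacobi identity for $V$ together with the fact that $D = T\otimes\id+\id\otimes\tfrac{d}{dt}$ acts compatibly with the bracket; that the $\mathbb{Z}$-grading by degree \eqref{degree} descends to $U$, $\hat U$, $\mathcal{U}$ and is exhaustively left/right split-filtered in the sense of \cite[Appendix A]{DGK}; and that the good-triple and tight-seminorm machinery of \cite{DGK} applies verbatim since nothing there used finite-dimensionality. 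The key point to stress is that $V$ being $\mathbb{Z}$-graded (rather than $\mathbb{Z}_{\geq0}$-graded) only changes the index sets over which the filtrations run, not the formal structure, and that the grading and derivation operators $\Delta, T$ play exactly the roles that $L_0, L_{-1}$ played for a conformal $V$.

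Next I would construct the map $\varphi$ of \eqref{he-isomorphism-map} and check it is well-defined, i.e.\ that $O_n(V)$ maps into $\mathrm{N}^{n+1}\mathcal{U}_0$. This has two parts: first, the relations $u\circ_n v$ of \eqref{elements-in-O} — here one expands $\res_x (1+x)^{\wt(u)+n}Y(u,x)v\, x^{-2n-2}$ in terms of modes $u_j v$ and then rewrites $(u_j v)(\wt(u_j v)-1)$ using the Jacobi-type relations \eqref{Jacobi-relations} generating $\overline{\mathscr{J}}$, obtaining an element of $\mathcal{U}U_{\leq -n-1} = \mathrm{N}^{n+1}\mathcal{U}_0$; this is exactly He's computation, which is purely formal in the modes. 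Second, the derivation relations $(T+\Delta)v$ of \eqref{eq:derivation_relation}: the excerpt already does this computation explicitly (showing $(Tv)(\wt(v)) + (\Delta v)(\wt(v)-1) = 0$ in $\mathcal{U}$ because $Tv\otimes t^{\wt(v)} + v\otimes\tfrac{d}{dt}t^{\wt(v)}$ lies in $\Ima(D)$), so I would simply invoke that. For the inverse map one uses the PBW-type spanning statement for $\mathcal{U}_0$: $\mathcal{U}_0$ is spanned modulo $\mathrm{N}^{n+1}\mathcal{U}_0$ by single modes $v(\wt(v)-1)$, again a formal consequence of the bracket relations and the filtration, not of the VOA axioms; then one shows the assignment $v(\wt(v)-1)\mapsto v+O_n(V)$ respects both the relations in $\mathcal{U}$ and the ideal $\mathrm{N}^{n+1}\mathcal{U}_0$, and that it is an algebra homomorphism for $*_n$, which is again He's / Dong–Li–Mason's bookkeeping.

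The honest way to write this is therefore as a ``transfer of proof'' argument: I would state that He's proof of \eqref{he-isomorphism} and the Dong–Li–Mason proof that $\hZhu{n}(V)$ is a unital associative algebra go through word-for-word once $L_{-1}, L_0$ are replaced by $T, \Delta$ and once one allows the grading index to run over all of $\mathbb{Z}$ and the graded pieces to be infinite-dimensional, with the completion/seminorm formalism of \cite{DGK} supplying exactly the analytic control that finite-dimensionality previously gave for free. I expect the main obstacle to be bookkeeping rather than a genuine new idea: one must be careful that, when $V$ has infinitely many negative (and positive) graded components, the sums appearing in \eqref{Jacobi-relations} and in the expansion of $u\circ_n v$ converge in the appropriate seminormed completion $\hat U$ so that the relations actually live in $\overline{\mathscr{J}}$ (not just $\mathscr{J}$) and so that $\mathcal{U}_0$ and its filtration $\mathrm{N}^{n+1}\mathcal{U}_0$ are the right objects; this is precisely why one passes to $\hat U$ and takes the closure $\overline{\mathscr{J}}$ rather than working with $U$ directly, and the tightness of the seminorm (noted in the excerpt) is what guarantees the quotients form a good triple. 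Once convergence is placed in the $\mathbb{C}\pp{t}$ / $\mathbb{C}\pp{t^{-1}}$ completions on the left and right, the zero-degree piece is unaffected and the isomorphism with $V/O_n(V)$ follows as in the conformal case.
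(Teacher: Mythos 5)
Your proposal is correct and follows essentially the same route as the paper, which states this proposition as a summary of the constructions in Sections 2.2--2.4 (extending the DGK filtration/completion machinery and He's mode-algebra realization verbatim to the $\mathbb{Z}$-graded setting, with the one genuinely new check being that $(T+\Delta)v$ maps to zero in $\mcU$ because $Tv\otimes t^{\wt(v)}+v\otimes\tfrac{d}{dt}t^{\wt(v)}\in\Ima(D)$). Your write-up is in fact more explicit than the paper's, which gives no separate proof block; your attention to convergence of the relations in the seminormed completion $\hat U$ and the closure $\overline{\mathscr{J}}$ matches the role the paper assigns to the tight-seminorm formalism of \cite{DGK}.
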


\subsection{The mode transition algebra for a \texorpdfstring{$\Z$}{Z}-graded vertex algebra}\label{sec:MTA_Zgraded}

Mode transition algebras are associative algebras introduced in \cite{DGK} for certain vertex operator algebras $V$ that carry information on their higher level Zhu algebras and on geometric aspects of the deformation of sheaves of $V$-modules over families of curves. In \cite{DGK}, the setting of most of the work is for vertex operator algebras of \emph{CFT-type}, which are vertex operator algebras with non-negative weight spaces and a one-dimensional zero weight space.

Importantly, mode transition algebras are more broadly defined for $\mathbb{Z}_{\geq 0}$-graded algebras, as was noted in \cite[Appendix B]{DGK}.
In this Section, we extend the notion to $\mathbb{Z}$-graded vertex algebras mainly by noting that the work of \cite{DGK} carries through in this setting.  The bulk of this work has in fact already been done in Section \ref{completion-subsection}.

Fix a $\mathbb{Z}$-graded vertex algebra $V$ and
define the vector space 
\[
\MTA = \mcU/\NL^1\mcU \otimes_{\mcU_0}\HeZhu{0}\otimes_{\mcU_0}\mcU/\NR^1\mcU,
\]
where, $\mathrm{A}_{ 0}:=\HeZhu{0}(V)$ and $\mcU$ is the (finite) universal enveloping algebra of $V$ constructed in Section \ref{universal-lie-section}. 
This $\MTA$ is the underlying vector space of the \emph{mode transition algebra} which is naturally doubly $\mathbb{Z}$-graded as follows. 
The quotients of $\mcU$ inherit a $\mathbb{Z}$-grading from $\mcU$ (which is derived from the degree grading $\deg v(m) = \wt(v) - m - 1$ on the Lie algebra $\mathfrak{L}(V)^f$ associated to $V$; see Section \ref{completion-subsection}).

Let $\left(\mcU/\NL^1\mcU\right)_{d},  \left(\mcU/\NR^1\mcU\right)_{d}$ be the respective spaces of equivalence classes of degree $d$ elements in $\mcU$. 
Concretely, we have
\begin{equation}\label{eq:quotients_N}
\begin{aligned}
    &\left(\mcU/\NL^1\mcU\right)_{d}:= \mcU_d/\NL^1\mcU_d=\mcU_d/\left(\sum_{j\leq-1}\mcU_{d-j}\mcU_j\right),\\
    &\left(\mcU/\NR^1\mcU\right)_{d}:= \mcU_d/\NR^1\mcU_d=\mcU_d/\left(\sum_{j\geq1}\mcU_{j}\mcU_{d-j}\right).
\end{aligned}
\end{equation}

For $d_1,d_2\in\Z$, we define $\MTA_{d_1,d_2}$ to be the vector space
\begin{equation}\label{eq:MTA}
    \MTA_{d_1,d_2}=\left(\mcU/\NL^1\mcU\right)_{d_1}\otimes_{\mcU_0}\HeZhu{0}\otimes_{\mcU_0}\left(\mcU/\NR^1\mcU\right)_{d_2}.
\end{equation}
Then we have the grading 
\begin{equation*}   \MTA=\bigoplus_{\substack{d_1\in\Z_{\geq0}\\d_2\in\Z_{\leq0}}}  \MTA_{d_1, d_2},
\end{equation*}
with $\MTA_{0,0} = \left(\mcU/\NL^1\mcU\right)_0\otimes_{\mcU_0}\mathrm{A}_ 0\otimes_{\mcU_0}\left(\mcU/\NR^1\mcU\right)_0 \cong  \mathrm{A}_0 \otimes_{\mathrm{A}_0} \mathrm{A}_0\otimes_{\mathrm{A}_0} \mathrm{A}_0 \cong \mathrm{A}_0$.

 We define an algebra structure on $\MTA$ by recalling the multiplicative structure described in \cite{DGK} and note that it extends to the setting of a $\mathbb{Z}$-graded vertex algebra.
For $\mathfrak{a}=u\otimes a\otimes u'$ and $\mathfrak{b}=v\otimes b\otimes v'$ two vectors in $\MTA$, set
\begin{equation}\label{eq:star_product}
    \mathfrak{a}\star\mathfrak{b}=u\otimes a(u'\circledast v)b\otimes v'\in \MTA 
\end{equation}
where $\circledast:\left(\mcU/\NR^1\mcU\right)\otimes_{\mcU}\left(\mcU/\NL^1\mcU\right)\to \mathrm{A}_0$ is defined on elements with homogeneous degree by:
\begin{equation}\label{acircle*b}
    a\circledast b:=\left\{\begin{array}{ll}
        0 & \text{if}\ \deg(a)+\deg(b)\neq0, \\
        \lbrack ab\rbrack_0  &  \text{otherwise},
    \end{array}\right.
\end{equation}
and extended by linearity.
Here $\lbrack a\rbrack_0$ denotes the image of $a$ in  $\mathrm{A}_0$. 

It is shown in \cite{DGK} in the context of vertex operator algebras of CFT-type that the map $\circledast$ is in fact an isomorphism, and that this extends to $\mathbb{Z}_{\geq 0}$-graded vertex algebras. Here we simply make note of the fact that if $V$ is a $\mathbb{Z}$-graded vertex algebra, then  $\circledast$ is still an isomorphism and the product $\star$ on $\MTA$ defines an associative algebra structure.  Thus we have

\begin{defn}[Mode transition algebra]\label{def:mode_transition_alg}
Let $V$ be a $\mathbb{Z}$-graded vertex algebra.  Then
   the product $\star:\MTA\times\MTA\to\MTA$ defines an algebra structure on $\MTA$, and $(\MTA, \star)$ is called the \emph{mode transition algebra} of $V$.
   Moreover, for $d \in \mathbb{Z}_{\geq 0}$, the subspace $\MTA_d:=\MTA_{d,-d}$ is closed under multiplication. Hence, it defines a subalgebra of $\MTA$ called the \emph{$d$-th mode transition algebra} of $V$.
\end{defn}

Note that the definition of $\star$ can be generalized to define a left and right $\MTA$-module structure on 
\begin{equation}\label{eq:induction_module}
     \Phi^L(W)=\left(\mcU/\NL^1\mcU\right)\otimes_{\mcU_0} W,\qquad
    \Phi^R(M)=M\otimes_{\mcU_0}\left(\mcU/\NR^1\mcU\right)
\end{equation}
for all left and right $\HeZhu{0}$-modules $W$ and $M$ respectively.
We will use this fact in the next Section. 

\subsection{Mode transition algebras with unities and the Splitting Theorem for Zhu algebras}

One of the strengths of the notion of mode transition algebras is that one may use them to determine the relation between the various levels of Zhu algebras as illustrated for the case of the Heisenberg vertex operator algebra in \cite{DGK}, which is dependent upon whether or not these algebras have unities \cite[Theorem 6.0.1]{DGK}.  In this subsection, we generalize these relations in the setting of $\mathbb{Z}$-graded vertex algebras.

Let $V$ be a $\mathbb{Z}$-graded vertex algebra.  For shorthand, we let $\HeZhu{d} := \HeZhu{d}(V)$, for $d\in \mathbb{Z}_{\geq 0}$, and we  
consider the map
\begin{eqnarray}\label{define-mu}
\mu_d : \MTA_d &\longrightarrow& \HeZhu{d} \\
\bar{\alpha} \otimes u \otimes \bar{\beta} & \mapsto & [\alpha u \beta]_d  \nonumber ,
\end{eqnarray}
where  $\bar{\alpha} = \alpha + \NL^1 \mathscr{U} \in (\mathscr{U}/\NL^1 \mathscr{U})_d$, $u \in \HeZhu{0}$, and $\bar{\beta} = \beta + \NR^1 \mathscr{U} \in (\mathscr{U}/\NR^1 \mathscr{U})_{-d}$,  and $[\alpha u \beta]_d$ denotes the equivalence class of $\alpha u \beta$ in $\HeZhu{d}$.
By \cite[Lemma B.3.1]{DGK} extended to the setting of a $\mathbb{Z}$-graded vertex algebra, this map is well-defined and there is an exact sequence
\begin{equation}\label{exact-seq} \MTA_d \xlongrightarrow{\mu_d} \HeZhu{d} \xlongrightarrow{\pi_d} \HeZhu{d-1} \longrightarrow 0
\end{equation}
where $\pi_d: \HeZhu{d}\to  \HeZhu{d-1}$ is the canonical projection.

We next note that the analog of \cite[Theorem B.3.3]{DGK} holds in the setting of $\mathbb{Z}$-graded vertex algebras, which we present as Lemma \ref{injective-lemma} and the Splitting Theorem \ref{thm:unity} below.   

\begin{lem}\label{injective-lemma}  
Let $V$ be a $\mathbb{Z}$-graded vertex algebra, and assume that for $d \in \mathbb{Z}_{\geq 0}$, the ring $\MTA_d$ has a unity $\mathscr{I}_d$.  Then the map $\mu_d : \MTA_d \longrightarrow \HeZhu{d}$ given by Eqn.\ \eqref{define-mu} is injective, i.e., the exact sequence \eqref{exact-seq} is a short exact sequence
\begin{equation}\label{short-exact} 
0 \longrightarrow \MTA_d \xlongrightarrow{\mu_d} \HeZhu{d} \xlongrightarrow{\pi_d} \HeZhu{d-1} \longrightarrow 0.
\end{equation}
\end{lem}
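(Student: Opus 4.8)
The plan is to exploit the $\star$-product structure on $\MTA_d = \MTA_{d,-d}$ together with the existence of the unity $\mathscr{I}_d$ to construct an explicit left inverse (a retraction) of $\mu_d$, from which injectivity is immediate. First I would recall from the exact sequence \eqref{exact-seq} that $\mu_d$ is already known to be a well-defined algebra map onto the kernel of $\pi_d$, so the only issue is the kernel of $\mu_d$. The idea, following \cite[Theorem B.3.3]{DGK}, is that the $\circledast$-pairing that defines $\star$ lets one ``read off'' an element of $\MTA_d$ from its image in $\HeZhu{d}$: given $a \in \HeZhu{d}$, lift it to $\mathscr{U}_0$ and pair it on the left and right against the two tensor-factor pieces of $\mathscr{I}_d = \sum_i \bar\alpha_i \otimes u_i \otimes \bar\beta_i$, producing an element $\rho(a) = \sum_i \bar\alpha_i \otimes u_i(\,\cdot\,) \otimes \bar\beta_i \in \MTA_d$ via the same bookkeeping as in \eqref{eq:star_product}. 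One then checks $\rho \circ \mu_d = \mathrm{id}_{\MTA_d}$ by a direct computation that collapses using the defining property $\mathscr{I}_d \star \mathfrak{x} = \mathfrak{x} = \mathfrak{x} \star \mathscr{I}_d$ for all $\mathfrak{x} \in \MTA_d$.

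In more detail, the key steps in order are: (1) unwind the definition of $\mu_d$ on a general element $\mathfrak{x} = \bar\alpha \otimes u \otimes \bar\beta$, so $\mu_d(\mathfrak{x}) = [\alpha u \beta]_d \in \HeZhu{d}$; (2) observe that the degree-zero element $\circledast$-pairing $a \circledast b = [ab]_0$ is exactly what converts a degree-$(-d)$ right-quotient class followed by a degree-$d$ left-quotient class back into $\HeZhu{0}$, and that $\mathscr{I}_d$ being a unity for $\star$ forces, for any $\mathfrak{x}$, the identity $\mathfrak{x} = \mathscr{I}_d \star \mathfrak{x} = \sum_i \bar\alpha_i \otimes u_i(\bar\beta_i \circledast \bar\alpha) u \otimes \bar\beta$ (and symmetrically on the other side); (3) note that $\bar\beta_i \circledast \bar\alpha$ depends only on the product $\beta_i \alpha$ in $\mathscr{U}_0$, i.e., on the image $[\alpha u\beta]_d$ together with the class of $\mathscr{I}_d$, hence defines a map $\rho$ factoring through $\HeZhu{d}$; (4) assemble (2) and (3) into $\rho \circ \mu_d = \mathrm{id}$, yielding injectivity and therefore the short exact sequence \eqref{short-exact}. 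The well-definedness of $\rho$ on $\HeZhu{d}$ — i.e., that pairing against $\mathscr{I}_d$ kills the relations defining $\HeZhu{d} = \mathscr{U}_0/\mathrm{N}^{d+1}\mathscr{U}_0$ — is where the unity hypothesis does real work, since $\mathscr{I}_d$ lives in $\MTA_d = \MTA_{d,-d}$, whose tensor factors are built from $\mathscr{U}/\NL^1\mathscr{U}$ and $\mathscr{U}/\NR^1\mathscr{U}$, precisely the quotients that annihilate $\mathrm{N}^{d+1}\mathscr{U}_0$ upon the relevant pairing.

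I expect the main obstacle to be step (3)/(4): verifying that the candidate retraction $\rho$ is well-defined on $\HeZhu{d}$ rather than merely on $\mathscr{U}_0$, and that it genuinely inverts $\mu_d$ rather than some twisted version. Concretely, one must track carefully how the $\mcU_0$-bimodule tensor products in \eqref{eq:MTA} interact with lifting representatives from $\HeZhu{d}$ to $\mathscr{U}_0$, ensuring the ambiguity in the lift lands in $\NL^1\mathscr{U}$ or $\NR^1\mathscr{U}$ after pairing against $\mathscr{I}_d$. This is exactly the content of the filtration identities $\NL^n U_p = \NR^{n+p}U_p$ and the ``good triple'' formalism recalled in Section \ref{completion-subsection}, so the argument should go through verbatim as in \cite[Theorem B.3.3]{DGK} once one has confirmed — as was done in Sections \ref{completion-subsection}--\ref{He-section} — that all the structural inputs (the grading on $\mcU$, the isomorphism $\circledast$, the identification $\HeZhu{n}(V) \cong \hZhu{n}(V)$) survive the passage from vertex operator algebras to $\mathbb{Z}$-graded vertex algebras. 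I would therefore present the proof as: ``The argument of \cite[Theorem B.3.3]{DGK} applies mutatis mutandis, using that $\circledast$ is an isomorphism (Section \ref{sec:MTA_Zgraded}) and the good-triple filtration identities of Section \ref{completion-subsection}; we indicate the construction of the retraction $\rho$ and leave the routine verification to the reader,'' followed by the explicit formula for $\rho$ and the one-line check that $\rho\circ\mu_d = \mathrm{id}$.
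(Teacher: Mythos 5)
Your argument is correct in substance and rests on the same mechanism as the paper's proof --- probing with the unity $\mathscr{I}_d$ and using that the $\star$-product only sees images in the Zhu algebra --- but it is packaged differently. The paper does not build an explicit retraction: it observes that for any left $\HeZhu{0}$-module $W$ the action of $\MTA_d$ on $\Phi^L(W)_d$ factors through $\HeZhu{d}$, specializes to $W=\Phi^R(\HeZhu{0})$ so that $\Phi^L(W)_d$ contains $\MTA_d$ with the action being left $\star$-multiplication, and concludes that a kernel element $\mathfrak{a}$ of $\mu_d$ satisfies $\mathfrak{a}=\mathfrak{a}\star\mathscr{I}_d=0$. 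Your retraction $\rho$ is the same idea run in reverse, and it does work, but one step needs sharpening: the one-sided expression $\mathscr{I}_d\star\mathfrak{x}=\sum_i\bar\alpha_i\otimes u_i[\beta_i\alpha]_0\,u\otimes\bar\beta$ in your step (2) does \emph{not} depend only on $[\alpha u\beta]_d$ --- it still involves $u$ and $\bar\beta$ separately --- so it cannot by itself define a map out of $\HeZhu{d}$. You must use the two-sided sandwich $\rho([\gamma]_d)=\sum_{i,j}\bar\alpha_i\otimes u_i[\beta_i\gamma\alpha_j]_0\,u_j\otimes\bar\beta_j$ (which you gesture at with ``symmetrically on the other side''); this is well defined on $\HeZhu{d}=\mcU_0/\mathrm{N}^{d+1}\mcU_0$ because the $\alpha_j$ have degree $d$, so for $\gamma\in\NL^{d+1}\mcU_0$ one gets $\beta_i\gamma\alpha_j\in\mcU\,\mcU_{\leq-1}=\NL^1\mcU$ and hence $[\beta_i\gamma\alpha_j]_0=0$, and then associativity in $\mcU$ together with the two-sided unity property gives $\rho\circ\mu_d=(\mathscr{I}_d\star(\cdot)\star\mathscr{I}_d)=\mathrm{id}$. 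What your version buys is an explicit splitting of $\mu_d$ visible at the level of formulas; what the paper's version buys is brevity, since the factoring-through-$\HeZhu{d}$ property of induced modules is already available from the construction of $\Phi^L$ and does not require re-verifying well-definedness by hand.
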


\begin{proof}
As in \cite{DGK} we have that for any left $\HeZhu{0}$-module  $W$, we can form the $\MTA$-module $\Phi^L(W)$ defined as in Eqn. \eqref{eq:induction_module},
where the action of $\MTA_d$ factors through the action of $\HeZhu{d}$.  
Let $\mathfrak{a} \in \MTA_d$ such that $\mu_d(\mathfrak{a}) = 0$.  Then $\mathfrak{a}$ acts on $\Phi^L(W)_d$ trivially.  
In particular, for $W$ being the $\HeZhu{0}$-module given by
\[
W = \Phi^R(\HeZhu{0}) = \HeZhu{0} \otimes _{\mcU_0}  \mcU/\NR^1\mcU ,
\]
this implies that the action of $\mathfrak{a}$ on $\MTA_d\subset  \left( \mcU/\NL^1\mcU \otimes_{\mcU_0}\HeZhu{0} \otimes_{\mcU_0}\mcU/\NR^1\mcU\right)_d = \Phi^L(W)_d$ is trivial.  
By definition, this action is nothing more than the algebra left product by $\mathfrak{a}$ on $\MTA_d$.
Since $\MTA_d$ has a unity $\mathscr{I}_d$, we have that $\mathfrak{a} \star \mathscr{I}_d = \mathfrak{a}$ is zero in $\MTA_d$. 
Hence, the kernel of $\mu_d$ is trivial. 
\end{proof}

We have the following Theorem, which extends \cite[Proposition 6.0.1(a) or Theorem B.3.3]{DGK} in the setting of vertex operator algebras of CFT-type to the setting of $\mathbb{Z}$-graded vertex algebras. 

\begin{thm}\label{thm:unity} Let $V$ be a $\mathbb{Z}$-graded vertex algebra and let $d \in \mathbb{Z}_{\geq 0}$.  
If the $d$-th mode transition algebra $\MTA_d$ of $V$ admits an identity element (i.e., a unity), then we have an isomorphism of rings
\begin{equation*}
    \HeZhu{d}(V)\cong \MTA_d\times \HeZhu{d-1}(V).
\end{equation*}
In particular, if $\MTA_j$ admits a unity for all $0\leq j\leq d$, then
\begin{equation*}
    \HeZhu{d}(V)\cong \bigoplus_{j=0}^d\MTA_j.
\end{equation*}
\end{thm}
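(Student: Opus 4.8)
The plan is to deduce the isomorphism $\HeZhu{d}(V)\cong \MTA_d\times\HeZhu{d-1}(V)$ from the short exact sequence \eqref{short-exact} established in Lemma~\ref{injective-lemma}, and then iterate. The key observation is that under the hypothesis that $\MTA_d$ has a unity $\mathscr{I}_d$, the exact sequence
\[
0 \longrightarrow \MTA_d \xlongrightarrow{\mu_d} \HeZhu{d} \xlongrightarrow{\pi_d} \HeZhu{d-1} \longrightarrow 0
\]
splits not merely as a sequence of abelian groups but as a sequence of (non-unital) rings, with the image $\mu_d(\MTA_d)$ being a two-sided ideal of $\HeZhu{d}$ that is itself a unital ring with identity $e_d := \mu_d(\mathscr{I}_d)$. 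First I would check that $\mu_d$ is a ring homomorphism onto its image: this follows by unwinding the definition of the $\star$-product in \eqref{eq:star_product}--\eqref{acircle*b} against the multiplication in $\HeZhu{d}$, exactly as in the proof of \cite[Theorem B.3.3]{DGK}, since all the relevant structure maps ($\circledast$, the tensor products over $\mcU_0$, the bracket $[\ ]_d$) are the same in the $\mathbb{Z}$-graded setting. The image $J_d := \mu_d(\MTA_d) = \ker \pi_d$ is automatically a two-sided ideal of $\HeZhu{d}$.

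Next I would exhibit $e_d = \mu_d(\mathscr{I}_d)$ as a central idempotent of $\HeZhu{d}$. It is idempotent because $\mu_d$ is a ring map and $\mathscr{I}_d\star\mathscr{I}_d = \mathscr{I}_d$. For centrality: given any $x \in \HeZhu{d}$, the elements $e_d x$ and $x e_d$ both lie in the ideal $J_d$, and on $J_d$ the element $e_d$ acts as a two-sided identity — this is precisely the content of $\mathscr{I}_d$ being a unity for $\MTA_d$, transported through the ring isomorphism $\mu_d: \MTA_d \xrightarrow{\sim} J_d$. Hence $e_d x e_d = e_d x = x e_d$, so $e_d$ is central. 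With a central idempotent in hand, the standard Peirce decomposition gives $\HeZhu{d} \cong e_d\HeZhu{d}e_d \times (1 - e_d)\HeZhu{d}(1 - e_d)$ as a direct product of rings (here $1$ is the unity of $\HeZhu{d}$, which exists by the Proposition preceding Remark~\ref{rem:virasoro_rel_in_zhu}). The first factor is $e_d\HeZhu{d}e_d = e_d \HeZhu{d} = J_d \cong \MTA_d$, using that $J_d$ is an ideal with identity $e_d$. The second factor maps isomorphically onto $\HeZhu{d-1}$ via $\pi_d$: indeed $\pi_d$ restricted to $(1-e_d)\HeZhu{d}(1-e_d)$ is surjective since $\pi_d$ is, and injective since its kernel is $J_d \cap (1-e_d)\HeZhu{d}(1-e_d) = 0$ (as $e_d$ kills nothing outside $J_d$ and acts as identity on $J_d$). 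This establishes $\HeZhu{d}(V) \cong \MTA_d \times \HeZhu{d-1}(V)$.

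For the final assertion, I would induct on $d$. The base case $d = 0$ is the identification $\HeZhu{0}(V) = \MTA_0 = \MTA_{0,0}$ noted after Eqn.~\eqref{eq:MTA}. For the inductive step, if $\MTA_j$ has a unity for all $0 \le j \le d$, then by the first part $\HeZhu{d}(V) \cong \MTA_d \times \HeZhu{d-1}(V)$, and by the inductive hypothesis $\HeZhu{d-1}(V) \cong \bigoplus_{j=0}^{d-1}\MTA_j$; combining gives $\HeZhu{d}(V) \cong \bigoplus_{j=0}^{d}\MTA_j$, where the direct product of finitely many rings is written as a direct sum. The main obstacle I anticipate is the bookkeeping in verifying that $\mu_d$ respects products — one must track how the $\circledast$ pairing feeds the "right tail" of one element into the "left tail" of the next and confirm this matches multiplication in $\HeZhu{d} = \mcU_0/\mathrm{N}^{d+1}\mcU_0$ — but since Lemma~\ref{injective-lemma} already gives injectivity of $\mu_d$ and \cite{DGK} provides the template, this is routine rather than genuinely difficult; the only genuinely new point over \cite{DGK} is confirming that nothing in the argument used positivity of the grading or finite-dimensionality of weight spaces, which it does not.
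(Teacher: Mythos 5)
Your proposal is correct and follows essentially the same route as the paper: the paper's proof also uses injectivity of $\mu_d$ from Lemma~\ref{injective-lemma} to obtain the nontrivial idempotent $e_d=\mu_d(\mathscr{I}_d)$ and then decomposes $\HeZhu{d}(V)\cong e_d\HeZhu{d}(V)\times(\mathbf{1}(-1)-e_d)\HeZhu{d}(V)$, iterating for the final claim. You merely supply details the paper leaves implicit — multiplicativity of $\mu_d$ and the standard argument that the identity of a unital two-sided ideal is a central idempotent — which is a worthwhile elaboration but not a different proof.
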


\begin{proof}
Since $\mu_d$ is injective,  $\mu_d(\mathscr{I}_d) \in \HeZhu{d}$ is nonzero and a nontrivial idempotent.  Thus
\begin{equation}
\HeZhu{d}(V) \cong \mu_d(\mathscr{I}_d) \HeZhu{d}(V) \times ({\bf 1}(-1) - \mu_d(\mathscr{I}_d))  \HeZhu{d}(V) \cong \MTA_d \times \HeZhu{d-1}(V).  
\end{equation}
The rest follows.  
\end{proof}

\subsection{Families of strong unities for mode transition algebras}\label{sec:strong_id}

In \cite{DGK}, it was shown that in the setting of vertex operator algebras of CFT-type, if the mode transition algebras admit a \emph{family of strong unities}, then certain properties of geometric constructions of moduli of curves related to ``admissible" $V$-modules can be determined.  Here we extend the notion of a \emph{family of strong unities} introduced in \cite{DGK} to the setting of $\mathbb{Z}$-graded vertex algebras. We also show that certain properties are equivalent to the existence of a family of strong unities analogous to those given in \cite{DGK}, although our equivalent notions contain some slight modifications compared to those in \cite{DGK}. 

\begin{defn}\label{def:strong_id}
Let $V$ be a $\mathbb{Z}$-graded vertex algebra, and assume that for every $d \in \mathbb{Z}_{\geq 0}$, the ring $\mathfrak{A}_d$ associated to $V$ is unital, with unity $\mathscr{I}_d \in \mathfrak{A}_d$.  We say that the family of unities $\{\mathscr{I}_d\}_{d\in  \mathbb{Z}_{\geq 0}}$ is a {\it family of strong unities for}  $\MTA$ if for every $m,n \in \mathbb{Z}_{\geq 0}$ and for every $\mathfrak{a} \in \mathfrak{A}_{n, -m}$,
\begin{equation}\label{strong-id}
\mathscr{I}_n \star \mathfrak{a} = \mathfrak{a} = \mathfrak{a} \star \mathscr{I}_m. 
\end{equation}
\end{defn}

When $V$ is a vertex operator algebra of {CFT type}\footnote{ A vertex operator algebra $V$ is said of \emph{CFT type} is $V=\bigoplus_{n\in\Z_{\geq0}}V_n$ and $V_0=\C\vac$, cf. \cite{DLMM}.}, \cite[Definition/Lemma 3.3.1]{DGK} both defines strong unities and explores other conditions for the unities $\mathscr{I}_d \in \mathfrak{A}_d$ to be strong unities.  Here, we have extended the definition of a family of strong unities to the setting of  $\Z$-graded vertex algebras by first considering what we believe is the most natural condition to be the definition of a family of strong unities. 

Recall that, for a left $\HeZhu{0}$-module $W$ and a right $\HeZhu{0}$-module $M$,  
$\Phi^L(W) = \mathscr{U}/\NL^1\mathscr{U} \otimes_{\mathscr{U}_0} W$ and 
$\Phi^R(M) = M \otimes_{\mathscr{U}_0} \mathscr{U}/\NR^1\mathscr{U}$
defined in Eqn.\eqref{eq:induction_module} are respectively left and right $\MTA$-modules.
We recall the following associativity result, a version of which appeared  in \cite[Proposition B.2.5]{DGK}, and that we apply to a $\mathbb{Z}$-graded vertex algebra.

\begin{lem}   \label{assoc-lemma}
Let $V$ be a $\mathbb{Z}$-graded vertex algebra and let $d, m, n \in \mathbb{Z}_{\geq 0}$. Then for all $u \in \mathfrak{L}(V)^f_d$, $\mathfrak{b} \in \MTA_{n}$, $\mathfrak{c} \in \MTA_{m}$  and $\mathfrak{a} \in \MTA_{n,-m}$, 
\[ 
(u \cdot \mathfrak{b}) \star \mathfrak{a}  = u \cdot (\mathfrak{b} \star \mathfrak{a} ) \quad \mbox{and} \quad \mathfrak{a} \star (\mathfrak{c} \cdot u) = (\mathfrak{a} \star \mathfrak{c}) \cdot u,
\] 
where $u$ is identified with its image in $\mcU$ (and its quotients) and $\cdot $ denotes the left (respectively the right) $\mcU$-module action on $\mcU/\NL^1\mcU$ (respectively $\mcU/\NR^1\mcU$).
\end{lem}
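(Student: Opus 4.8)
The plan is to reduce both identities, using bilinearity of $\star$ and of the two $\mcU$-actions, to a direct check on elementary tensors, and then to simply unwind the definitions. Recall that $\MTA$ is spanned by elements of the form $v\otimes b\otimes v'$ with $v\in\mcU/\NL^1\mcU$, $b\in\HeZhu{0}$, $v'\in\mcU/\NR^1\mcU$; that the left $\mcU$-module structure on $\mcU/\NL^1\mcU$ (resp.\ the right one on $\mcU/\NR^1\mcU$) acts, on such an elementary tensor, only on the first (resp.\ third) tensor slot; and that, by \eqref{eq:star_product}, the $\star$-product of two elementary tensors retains the first slot of the left operand and the third slot of the right operand, modifying only the middle slot through the pairing $\circledast$ of the third slot of the left operand with the first slot of the right operand. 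So I would fix $\mathfrak b=v\otimes b\otimes v'\in\MTA_n$, $\mathfrak a=p\otimes q\otimes p'\in\MTA_{n,-m}$ and $\mathfrak c=w\otimes c\otimes w'\in\MTA_m$ and compute.

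For the first identity the left action gives $u\cdot\mathfrak b=(u\cdot v)\otimes b\otimes v'$ with $u\cdot v\in(\mcU/\NL^1\mcU)_{d+n}$, whence
\[
(u\cdot\mathfrak b)\star\mathfrak a=(u\cdot v)\otimes b\,(v'\circledast p)\,q\otimes p'=u\cdot\bigl(v\otimes b\,(v'\circledast p)\,q\otimes p'\bigr)=u\cdot(\mathfrak b\star\mathfrak a),
\]
the middle equality being the definition of the left action on the first slot. Symmetrically, $\mathfrak c\cdot u=w\otimes c\otimes(w'\cdot u)$ and
\[
\mathfrak a\star(\mathfrak c\cdot u)=p\otimes q\,(p'\circledast w)\,c\otimes(w'\cdot u)=\bigl(p\otimes q\,(p'\circledast w)\,c\otimes w'\bigr)\cdot u=(\mathfrak a\star\mathfrak c)\cdot u.
\]
The structural reason both go through is the ``locality'' of $\star$: on elementary tensors it only touches the middle $\HeZhu{0}$-slot and the adjacent $\circledast$-pairing, whereas the $\mcU$-actions operate on the outer slots, so the two kinds of operations simply commute slot by slot.

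The genuine content, and the only step I expect to require real care, is not this formal manipulation but the surrounding well-definedness and grading bookkeeping: that $\NL^1\mcU$ and $\NR^1\mcU$ are respectively a left and a right ideal of $\mcU$, so the quotients are genuine one-sided $\mcU$-modules; that $\circledast$ is $\mcU$-balanced and $\star$ descends to the tensor products over $\mcU_0$ and to the quotients by $\NL^1\mcU$, $\NR^1\mcU$; and that the degrees are as claimed, placing $u\cdot\mathfrak b$ in $\MTA_{d+n,-n}$, $\mathfrak b\star\mathfrak a$ in $\MTA_{n,-m}$, $\mathfrak c\cdot u$ in $\MTA_{m,d-m}$, and the final terms in $\MTA_{d+n,-m}$ and $\MTA_{n,d-m}$, with the convention that any graded component whose indices leave the range $d_1\geq0$, $d_2\leq0$ vanishes (so that the second identity is the trivial $0=0$ when $d>m$). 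All of these foundational facts were set up for $\Z$-graded vertex algebras in Sections \ref{completion-subsection}--\ref{sec:MTA_Zgraded}, following \cite[Appendix B]{DGK}; indeed this lemma is the $\Z$-graded analogue of \cite[Proposition B.2.5]{DGK}, and once those facts are in place the displayed computations finish the proof, so I anticipate no further obstacle.
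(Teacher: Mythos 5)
Your proof is correct and follows essentially the same route as the paper's: both reduce to elementary tensors and observe that the $\star$-product only modifies the middle $\HeZhu{0}$-slot via the adjacent $\circledast$-pairing while the $\mcU$-actions touch only the outer slots, so the two operations commute; the paper likewise treats the second identity as "totally analogous." Your additional remarks on well-definedness and degree bookkeeping are accurate but are already covered by the setup in the preceding sections.
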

\begin{proof}
Let $\mathfrak{b}=w_L\otimes b\otimes w_R \in \mathfrak{A}_n$  and $\mathfrak{a}=v_L\otimes a\otimes v_R$ with $w_L,v_L\in (\mcU/\NL^1\mcU)_{n}$, $w_R\in (\mcU/\NR^1\mcU)_{-n}$, $v_R\in(\mcU/\NR^1\mcU)_{-m}$ and $b,a\in\HeZhu{0}$. Then, using the definition \eqref{eq:star_product} of the $\star$ product, we have  \begin{eqnarray*}
        (u \cdot \mathfrak{b}) \star \mathfrak{a}&=&\left( (u\ w_L)\otimes b\otimes w_R\right)\star \mathfrak{a}\\
        &=&(u\ w_L)\otimes b(w_R\circledast v_L) a\otimes v_R\\
        &=&u\cdot(w_L\otimes b(w_R\circledast v_L) a\otimes v_R)=u\cdot (\mathfrak{b} \star \mathfrak{a}).
    \end{eqnarray*}
By linearity, it follows that the associativity $(u\cdot \mathfrak{b})\star \mathfrak{a}=u\cdot (\mathfrak{b} \star \mathfrak{a} )$ holds for any $\mathfrak{b}\in \mathfrak{A}_n$. The proof of the other equality is totally analogous.
\end{proof}

Next we prove some properties of a mode transition algebra $\MTA$ which are equivalent to $\MTA$ having a family of strong unities.   A similar result was given in \cite{DGK} Definition/Lemma 3.3.1, but with an assumption that left unities are also right unities.  Here we do not make that assumption and thus our properties $(iii)-(v)$ below have extra conditions in comparison to those given in \cite{DGK}.

\begin{prop}\label{properties-of-Is}
Let $V$ be a $\mathbb{Z}$-graded vertex algebra and assume that for every $d \in \mathbb{Z}_{\geq 0}$, the ring $\mathfrak{A}_d$ associated to $V$ is unital, with unity $\mathscr{I}_d \in \mathfrak{A}_d$.  
Then, the following are equivalent: 
\begin{enumerate}[label=(\roman*),leftmargin=*]
    \item The family of unities $\{\mathscr{I}_d\}_{d\in  \mathbb{Z}_{\geq 0}}$ is a family of strong unities.
    
    \item For every $d, m, n \in \mathbb{Z}_{\geq 0}$, for all $u \in \mathfrak{L}(V)^f_d$ and $\mathfrak{a} \in \MTA_{n,-m}$ 
\[ (u \cdot \mathscr{I}_n) \star \mathfrak{a}  = u \cdot \mathfrak{a} \quad \mbox{and} \quad \mathfrak{a} \star (\mathscr{I}_m \cdot u) = \mathfrak{a} \cdot u.\] 

    \item For every $d \in \mathbb{Z}_{\geq 0}$, the homomorphisms $\mathfrak{A}_d \rightarrow \mathrm{End}(\Phi^L(\HeZhu{0})_d)$ and $\mathfrak{A}_d \rightarrow \mathrm{End}(\Phi^R(\HeZhu{0})_{-d})$ are unital.
    
    \item For every $d \in \mathbb{Z}_{\geq 0}$, the homomorphisms $\mathfrak{A}_d \rightarrow \mathrm{End}(\Phi^L(\HeZhu{0})_d)$ and $\mathfrak{A}_d \rightarrow \mathrm{End}(\Phi^R(\HeZhu{0})_{-d})$ are unital and injective.
    
    \item For every $d \in \mathbb{Z}_{\geq 0}$ and $\HeZhu{0}$-module $W$, the homomorphisms $\mathfrak{A}_d \rightarrow \mathrm{End}(\Phi^L(W)_d)$ and $\mathfrak{A}_d \rightarrow \mathrm{End}(\Phi^R(W)_{-d})$ are unital.
\end{enumerate}
\end{prop}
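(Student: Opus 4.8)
The plan is to prove the five conditions equivalent by establishing a cycle of implications, using the associativity result of Lemma \ref{assoc-lemma} as the main engine. First I would prove $(i) \Leftrightarrow (ii)$. For $(i) \Rightarrow (ii)$: given $u \in \mathfrak{L}(V)^f_d$ and $\mathfrak{a} \in \MTA_{n,-m}$, apply Lemma \ref{assoc-lemma} with $\mathfrak{b} = \mathscr{I}_n \in \MTA_n$ to get $(u \cdot \mathscr{I}_n) \star \mathfrak{a} = u \cdot (\mathscr{I}_n \star \mathfrak{a})$, and then $(i)$ says $\mathscr{I}_n \star \mathfrak{a} = \mathfrak{a}$, so this equals $u \cdot \mathfrak{a}$; the right-hand identity is symmetric using $\mathfrak{a} \star \mathscr{I}_m = \mathfrak{a}$. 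For $(ii) \Rightarrow (i)$: take $d = 0$ and $u = {\bf 1}(-1) = 1_{\mcU}$, so that $u \cdot \mathscr{I}_n$ projects to $\mathscr{I}_n$'s image, $u \cdot \mathfrak{a} = \mathfrak{a}$, and $(u\cdot \mathscr{I}_n)\star \mathfrak{a}$ becomes $\mathscr{I}_n \star \mathfrak{a}$ after identifying $1_\mcU \cdot \mathscr{I}_n$ with $\mathscr{I}_n$ (here one should check that acting by the unit of $\mcU$ on $(\mcU/\NL^1\mcU)_n$ is the identity, which is immediate); symmetrically $\mathfrak{a} \star (\mathscr{I}_m \cdot 1_\mcU) = \mathfrak{a} \star \mathscr{I}_m = \mathfrak{a}$.

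Next I would handle the $\mathrm{End}$-reformulations. The key observation is that $\Phi^L(\HeZhu{0})_d$ is, by the construction in Eqn. \eqref{eq:induction_module} and the discussion around Eqn. \eqref{eq:MTA}, naturally identified with $\MTA_d$ itself when one further tensors on the right by $\mcU/\NR^1\mcU$ — more precisely, as in the proof of Lemma \ref{injective-lemma}, taking $W = \Phi^R(\HeZhu{0})$ gives $\Phi^L(W)_d = \MTA_d$ with the $\MTA_d$-action being left $\star$-multiplication. So "the map $\MTA_d \to \mathrm{End}(\Phi^L(\HeZhu{0})_d)$ is unital" should be unpacked: the action of $\mathscr{I}_d$ on $\Phi^L(\HeZhu{0})_d = (\mcU/\NL^1\mcU)_d \otimes_{\mcU_0}\HeZhu{0}$ is the identity. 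Writing a general element as $u \cdot (\mathscr{I}_d \otimes [{\bf 1}])$-type expression and applying $(ii)$, or directly: an element of $\Phi^L(\HeZhu{0})_d$ has the form $\bar u \otimes a$ with $\bar u \in (\mcU/\NL^1\mcU)_d$, $a \in \HeZhu{0}$; the action of $\mathscr{I}_d = w_L \otimes b \otimes w_R$ on it is (via the left $\MTA$-module structure on $\Phi^L$ from Eqn. \eqref{eq:star_product}) $w_L \otimes b(w_R \circledast u) \cdot$ something — and unitality of this amounts exactly to condition $(ii)$ with $m = 0$, $\mathfrak{a}$ ranging over $\Phi^L(\HeZhu{0})_d$ viewed inside $\MTA_{d,0}$. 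I would therefore prove $(ii) \Leftrightarrow (iii)$ by this direct identification of module structures, $(iii) \Rightarrow (v)$ by noting every $\HeZhu{0}$-module $W$ receives a map from free modules $\bigoplus \HeZhu{0}$ and $\Phi^L$ is right exact (it is a tensor product), so unitality on $\Phi^L(\HeZhu{0})$ propagates to unitality on $\Phi^L(W)$; and $(v) \Rightarrow (iii)$ trivially by taking $W = \HeZhu{0}$.

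For the injectivity statement $(iii) \Leftrightarrow (iv)$: the implication $(iv) \Rightarrow (iii)$ is trivial. For $(iii) \Rightarrow (iv)$, the point is that once the action is unital, injectivity follows from the argument already used in Lemma \ref{injective-lemma}: if $\mathfrak{a} \in \MTA_d$ acts as zero on $\Phi^L(\HeZhu{0})_d$, then in particular, identifying $\MTA_d$ as a submodule of $\Phi^L(W)_d$ for $W = \Phi^R(\HeZhu{0})$ exactly as in that proof, $\mathfrak{a} \star \mathscr{I}_d = 0$; but unitality gives $\mathfrak{a} \star \mathscr{I}_d = \mathfrak{a}$, so $\mathfrak{a} = 0$. (One small wrinkle: Lemma \ref{injective-lemma} was about $\mu_d$ into $\HeZhu{d}$, but the mechanism — an element killing a faithful-up-to-unit module is zero — is identical, and I'd phrase it self-containedly.) Assembling, the cycle $(i) \Rightarrow (ii) \Rightarrow (iii) \Rightarrow (iv) \Rightarrow (iii)$, together with $(iii) \Rightarrow (v) \Rightarrow (iii)$ and $(ii) \Rightarrow (i)$, closes all equivalences.

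The main obstacle I anticipate is bookkeeping the identification between $\MTA_d$-module structures and $\mathrm{End}$-actions carefully enough that "unital" on each side means literally the same equation — in particular being careful about left versus right conditions, since (as the paper emphasizes in the paragraph before Proposition \ref{properties-of-Is}) we are \emph{not} assuming left unities are right unities, so the $\mathscr{I}_m \cdot u$ and $u \cdot \mathscr{I}_n$ sides must be tracked independently throughout, and the $\circledast$ pairing's degree-matching in Eqn. \eqref{acircle*b} must be invoked to see that only the $\MTA_{n,-m}$ with matching degrees contribute. The associativity Lemma \ref{assoc-lemma} does the real work in $(i) \Leftrightarrow (ii)$; everything else is careful unwinding of definitions plus the right-exactness of $\Phi^L, \Phi^R$.
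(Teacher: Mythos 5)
Your proposal is correct and follows essentially the same route as the paper: Lemma \ref{assoc-lemma} drives $(i)\Leftrightarrow(ii)$, the identifications $\Phi^L(\HeZhu{0})_d \cong \MTA_{d,0}$ and $\Phi^R(\HeZhu{0})_{-d}\cong\MTA_{0,-d}$ link $(iii)$ to the strong-unity condition, and injectivity in $(iv)$ comes from the Lemma \ref{injective-lemma} mechanism $\mathfrak{a}=\mathfrak{a}\star\mathscr{I}_d=0$. The only cosmetic differences are that you close the cycle through $(ii)\Leftrightarrow(iii)$ rather than the paper's $(i)\Leftrightarrow(iii)$ (the passage from unitality on $\MTA_{d,0}$ to all of $\MTA_{n,-m}$ still needs the associativity step $\mathscr{I}_n\star(v_L\otimes a\otimes v_R)=(\mathscr{I}_n\star(v_L\otimes a\otimes 1))\cdot v_R$, which you should make explicit), and that you derive $(iii)\Rightarrow(v)$ from right-exactness of $\Phi^L$ applied to a free presentation of $W$ instead of the paper's direct computation with $u\otimes w=(u\otimes 1_{\HeZhu{0}})\cdot(1_{\mcU}\otimes w)$; both are fine.
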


\begin{proof} 
For the proof, we follow the method of \cite{DGK}, and observe that these properties hold in the setting of $\mathbb{Z}$-graded vertex algebras.  

We first note that condition $(iv)$ implies $(iii)$, and $(v)$ implies $(iii)$. Thus it will be enough to show the following implications:
\[ (ii) \iff (i) \iff (iii) \quad \mbox{and} \quad (iii) \implies (v) \quad \mbox{and} \quad (iii) \implies (iv).\]
\begin{description}
    \item[$(ii) \implies (i)$] follows from taking $d = 0$ and $u = {\bf 1}(-1) \in \mathfrak{L}(V)^f_0$.
    
    \item[$(i) \implies (ii)$] 
    follows from the associativity Lemma \ref{assoc-lemma} taking $\mathfrak{b}=\mathscr{I}_n$, $\mathfrak{c}=\mathscr{I}_m$ and using the fact that $\mathscr{I}_n$ and $\mathscr{I}_m$  are strong unities.  
    

    \item[$(i) \implies (iii)$] note that 
    \begin{eqnarray*}
        \Phi^L(\HeZhu{0})_d &=& \left(\mathscr{U}/\NL^1\mathscr{U} \otimes_{\mathscr{U}_0} \HeZhu{0} \right)_d \ = \  \left(\mathscr{U}/\NL^1\mathscr{U}\right)_d \otimes_{\mathscr{U}_0} \HeZhu{0} \\
        &\cong& \left(\mathscr{U}/\NL^1\mathscr{U}\right)_d \otimes_{\mathscr{U}_0} \HeZhu{0}  \otimes_{\HeZhu{0}} \HeZhu{0} \\
        &\cong& \left(\mathscr{U}/\NL^1\mathscr{U}\right)_d \otimes_{\mathscr{U}_0} \HeZhu{0}  \otimes_{\mathscr{U}_0}  \left(\mathscr{U}/\NR^1 \mathscr{U} \right)_0 = \MTA_{d,0} .
    \end{eqnarray*} 
    Hence, identifying $\Phi^L(\HeZhu{0})_d\cong \MTA_{d,0}$ maps $\mathscr{I}_d\in\MTA_d$ onto the trivial endomorphism of $\MTA_{d,0}$. Similarly, one can identify $\Phi^R(\HeZhu{0})_{-d}\cong \MTA_{0,-d}$, showing that the second homomorphism is also unital.

    \item[$(iii) \implies (i)$]
    using the second equality of Lemma \ref{assoc-lemma} we show that, if $\mathfrak{a}=v_L\otimes a\otimes v_R$ with $v_L\in (\mcU/\NL^1\mcU)_{n}$, $v_R\in(\mcU/\NR^1\mcU)_{-m}$ and $a\in\HeZhu{0}$, then
    \[\mathscr{I}_n \star (v_L \otimes a \otimes v_R) = (\mathscr{I}_n \star (v_L \otimes a \otimes 1)) \cdot v_R.\]
    Since $\mathscr{I}_n$ acts trivially on $v_L \otimes a \otimes 1\in\MTA_{n,0}\cong \Phi^L(\HeZhu{0})_n$ by the previous identification, we deduce
    \[
    \mathscr{I}_n \star \mathfrak{a}
    = (v_L \otimes a \otimes 1) \cdot v_R
    = \mathfrak{a},
    \]
    which holds for any $\mathfrak{a}\in\MTA_{n,-m}$ by linearity.
    We prove the second identity relation $\mathfrak{a}\star\mathscr{I}_m=\mathfrak{a}$ using a similar argument, with $\mathscr{I}_m$ acting on a element of $\MTA_{0,-m}\cong\Phi^R(\HeZhu{0})_{-m}$.

    \item[$(iii) \implies (v)$] let $u \otimes w \in \Phi^L(W)_d$, with $u\in (\mcU/\NL^1\mcU)_d$ and note that $u\otimes w=(u\otimes 1_{\HeZhu{0}})\cdot(1_{\mcU}\otimes w)$ where $\cdot$ is the natural action of $\Phi^L(\HeZhu{0})_d$ onto $\Phi^L(W)_d$.
    The action of the mode transition algebra $\MTA_d$ acts by left multiplication on $\Phi^L(W)_d$ is compatible with the action $\cdot$ in the following sense:
    \begin{eqnarray*}
        \mathscr{I}_d \star (u \otimes w)
        &=& \iota_L\otimes \iota_0(\iota_R\circledast u) w \\
        &=& (\iota_L\otimes \iota_0(\iota_R\circledast u) 1_{\HeZhu{0}})\cdot (1_\mcU\otimes w)\\
        &=& \left(\mathscr{I}_d \star (u \otimes 1_{\HeZhu{0}})\right)\cdot (1_\mcU\otimes w).
    \end{eqnarray*}  
    Since $\mathscr{I}_d$ acts trivially on $\Phi^L(\HeZhu{0})_d$, the first factor reduces to
    $\mathscr{I}_d \star (u \otimes 1_{\HeZhu{0}})=(u \otimes 1_{\HeZhu{0}})$ and thus the homomorphism $\mathfrak{A}_d \rightarrow \mathrm{End}(\Phi^L(W)_d)$ is unital. 
    A similar argument using the action of $\Phi^R(\HeZhu{0})_{-d}$ by right multiplication onto $\Phi^R(M)_{-d}$ allows us to prove that the second homomorphism is also unital.

    \item[$(iii) \implies (iv)$]  we need to show that the two homomorphisms are injective.
    The action of the left multiplication by $\star$ of $\MTA_d$ on $\MTA\simeq \Phi^L(\HeZhu{0})\otimes \mcU/\NR^1\mcU$ is determined by the left multiplication by $\star$ on $\Phi^L(\HeZhu{0})$ only.
    Additionally, we showed in the proof of Lemma~\ref{injective-lemma} that when $\MTA_d$ has a unit, $\mathfrak{a}\in\MTA_d$ acts trivially if and only if $\mathfrak{a}=0$ has
    $$\mathfrak{a}=\mathfrak{a}\star\mathscr{I}_d=0.$$
    So the former action is injective as well as the latter.
    Again, one can adapt the argument to prove that the second homomorphism is injective as well.
\end{description}
\end{proof}

In the next Section, we construct the $d$-th mode transition algebras for a $\mathbb{Z}$-graded vertex algebra that is not a vertex operator algebra, but is a $\mathbb{Z}_{\geq 0}$-graded conformal vertex algebra with infinite dimensional weight spaces, namely for the Weyl vertex algebra at central charge 2. In addition, we prove that these algebras are unitary and that the unities form a family of strong unities.  

\section{The mode transition algebra and higher level Zhu algebras for the Weyl vertex algebra at central charge \texorpdfstring{$2$}{2}.}\label{sec:WeylVA}

In this Section, we introduce the Weyl vertex algebra $\WeylVA$ also known as the $\beta\gamma$-system or bosonic ghosts.
This vertex algebra has a one-parameter family of conformal vectors but we shall focus here on the one with conformal charge $c=2$.
With this choice of conformal element, the Weyl vertex algebra $\WeylVA$ is a conformal $\mathbb{Z}$-graded vertex algebra, and in fact a conformal $\mathbb{Z}_{\geq 0}$-graded vertex algebra, with infinite dimensional graded spaces.

We give the structure of the mode transition algebras at all levels for the Weyl vertex algebra $\WeylVA$.  We prove that these algebras are unital and that the unities provide a family of strong unities (see Section \ref{sec:strong_id}).
Using Theorem \ref{thm:unity}, we describe all the Zhu algebras for $\WeylVA$. 

A similar result, conjectured by Addabbo and Barron \cite{AB23}, for the Heisenberg vertex operator algebra was proved in \cite{DGK} using mode transition algebras.
It has applications to algebro-geometric settings. 
The Weyl vertex algebra $\WeylVA$ at central charge 2 is not $C_1$-cofinite (since $\WeylVA_0$ is infinite dimensional), even though its mode transition algebra admits a family of strong unities as we show below. It is not clear what algebro-geometric implications follow.

\subsection{The Weyl vertex algebra}\label{subsectionWeyl} 
The \emph{Weyl Lie algebra}, denoted $\Weylalg_{Lie}$, is the 3-dimensional Lie algebra with generators $\{ a, a^*, 1\}$ with the only non-trivial relation given by 
\begin{equation}\label{Weyl-relations} 
[a, a^*] = 1.
\end{equation}
The \emph{rank one Weyl algebra}, denoted $\Weylalg$, is the universal enveloping algebra $\mathcal{U}(\Weylalg_{Lie})$.
Concretely, it corresponds to the quotient of the free unital associative algebra on two generators $a$ and $a^*$ modulo the relation $aa^* - a^*a - 1$:
$$\Weylalg \cong \mathbb{C}\langle a, a^*\rangle/\langle aa^* - a^*a - 1\rangle.$$
Note that $\Weylalg$ is isomorphic to the associative algebra of endomorphisms $\mathbb{C}[x, x^{-1}]$ generated by multiplication by $x$ and the differentiation operator  $\frac{\partial }{\partial x}$ under the correspondence
\begin{equation*}
    \begin{gathered}
        \Weylalg \overset{\sim}{\longrightarrow}\mathbb{C}[x, x^{-1}],\qquad
        a\mapsto\frac{\partial }{\partial x},\qquad a^*\mapsto x.
    \end{gathered}
\end{equation*}

The group of automorphisms of $\Weylalg$ contains infinitely many elements and, 
as proved by Dixmier \cite{D}. 
Among them, the following family of automorphisms of $\Weylalg$ will be useful.
Consider for any $\lambda \in \mathbb{C}^\times$, consider the automorphism 
\begin{equation}\label{eq:varphi_def}
 \varphi_\lambda : \Weylalg \longrightarrow \Weylalg , \qquad a \mapsto \lambda a^*, \qquad a^* \mapsto -\lambda^{-1} a.
\end{equation}
that will be of particular interest. Each of these automorphisms $\varphi_\lambda$ is of order four with $\varphi_\lambda^2 = - \id_{\Weylalg}$ and $\varphi_\lambda^3 = \varphi_{\lambda^{-1}}$. 

The \emph{bosonic ghost Lie algebra} $\Lieghost$ is the infinite-dimensional Lie algebra
\begin{equation*}
    \Lieghost= \Weylalg_{Lie}  \otimes \C[s, s^{-1}]\oplus \C K,
\end{equation*}
generated by a central element $K$ and the elements $a_m:=a \otimes s^m$, and $a^*_n:=a^* \otimes s^n$ with $m,n \in \mathbb{Z}$ satisfying
\[ \left[ a_m, a^*_n \right] = \delta_{m+n,0} K.\]
The automorphisms of  $\Weylalg_{Lie}$ can be lifted into $\Lieghost$-automorphisms.
In addition, $\Lieghost$ admits the spectral flow automorphisms 
\begin{equation}\label{spectral-flow}
\sigma^\ell: \Lieghost \longrightarrow \Lieghost \qquad a_n \mapsto a_{n+ \ell} \qquad a^*_n \mapsto a^*_{n-\ell}, \qquad K\mapsto K,    
\end{equation} 
for $\ell \in \mathbb{Z}$. 

Following \cite{RW}, we fix the triangular decomposition
\begin{equation*}
    \Lieghost=\Lieghost^-\oplus\Lieghost^0\oplus\Lieghost^+ 
\end{equation*}
where 
\begin{align*}
 \Lieghost^{-}&= \mathrm{span}_{\mathbb{C}}\{ a_{-m}, a^*_{-n} \, | \, m\geq1 ,  n \geq 0 \}, \\
 \Lieghost^{0}&= \mathrm{span}_{\mathbb{C}}\{  K \}, \\
 \Lieghost^{+}&= \mathrm{span}_{\mathbb{C}}\{ a_{m}, a^*_{n} \, | \, m \geq 0, n\geq 1 \}.
\end{align*}
The \emph{rank one Weyl vertex algebra}, denoted $\WeylVA$, is the induced $\Lieghost$-module 
$$\WeylVA = \mathcal{U}(\mathcal{L}) \otimes_{\mathcal{U}(\Lieghost^0\oplus \Lieghost^+ )} \mathbb C \bf 1,$$
where $\mathbb{C} \bf 1$ is the one-dimensional $ \Lieghost^0\oplus \Lieghost^+$-module on which $\Lieghost^{+}$ acts trivially and $K$ acts as the multiplication by the scalar $1$:
\begin{align*}
a_m{\bf 1}&=0 \text{ for }m \geq 0, \\
a^*_n{\bf 1}&=0  \text{ for }n \geq 1,\\
K\vac&=\vac.
\end{align*}
Then, $\WeylVA\cong U(\mathcal{L}^-)$ as vector spaces, and $\WeylVA$ admits a PBW basis which consists of the monomials
\begin{equation}\label{eq:PBW}
    a_{-m_1-1}a_{-m_2-1} \dots a_{-m_k-1}a^*_{-n_1}\dots a^*_{-n_l}\vac,
\end{equation}
where $m_i,n_j\in\Z$ with
$m_1\geq m_2\geq\dots\geq m_k\geq0$,  $n_1\geq n_2\geq\dots\geq n_l\geq0$, and, abusing notation, $\vac$  denotes the image of $1\otimes \vac$ in $\WeylVA$.
There is a unique vertex algebra structure on  $\WeylVA$ (see for instance Theorem 5.7.1 in \cite{LL} or Lemma 11.3.8 in \cite{FBbook}) given by $(\WeylVA,Y,{\bf 1})$ with vertex operator map 
\[Y:  \WeylVA\longrightarrow \mathrm{End}(\WeylVA)[[z,z^{-1}]]\] 
such that 
\begin{equation}
\begin{aligned}\label{genweyl}
Y(a_{-1}{\bf 1},z)=a(z):=\sum_{n\in\mathbb{Z}}a_nz^{-n-1}, \qquad & Y(a^*_0{\bf 1},z)=a^*(z):=\sum_{n\in\mathbb{Z}}a^*_nz^{-n}.    
\end{aligned}
\end{equation}
The fields $a(z)$, $a^*(z)$ satisfy the operator product expansions
\begin{equation}
    \begin{gathered}
        a(z) a^*(w) = \frac{1}{z-w} +  \no{a(z) a^*(w)} \\
        a(z) a(w) =\no{a(z) a(w)},\qquad a^*(z) a^*(w) =\no{a^*(z) a^*(w)}.
    \end{gathered}
\end{equation}
where $\no{x(z)y(z)}$ denotes the ordered product of the fields $x(z)$ and $y(z)$ given by
\begin{align*}
\no{x(z)y(z)}\overset{\text{def}}{=} \ x(z)^+y(z)+y(z)x(z)^-,
\end{align*} 
where $x(z)^+=\sum_{n\leq -1}x_nz^{-n-1}$ and $x(z)^-=\sum_{n\geq 0}x_nz^{-n-1}$.
Moreover, the map $Y:  \WeylVA \rightarrow \mathrm{End}(\WeylVA)[[z,z^{-1}]]$ is defined by linearity on the elements of the basis \eqref{eq:PBW} by
\begin{align*}
Y(a_{-m_1-1}a_{-m_2-1}& \dots a_{-m_k-1}a^*_{-n_1}\dots a^*_{-n_l}\vac, z)\\
& = \prod_{i=1}^k \frac{1}{m_i!} \prod_{j=1}^\ell \frac{1}{n_j!} \no{\partial^{m_1}a(z)\dots  \partial^{m_k}a(z) \partial^{n_1}a^*(z)\dots\partial^{n_l}a^*(z)}.  
\end{align*}

\begin{rem}\label{bgrem}
\phantom{x}
\begin{enumerate}[label=(\roman*),leftmargin=*]
    \item The fields $a(z)$ and $a^*(z)$ defined in \eqref{genweyl} are usually denoted by $\beta(z)$ and $\gamma(z)$ in the Physics literature (up to a choice of sign) where the vertex algebra $\WeylVA$ is referred to as the $\beta\gamma$ vertex algebra, $\beta \gamma$-system, or bosonic ghosts.
    
    \item Since for all $n\in \mathbb{Z}$, the $n$ modes of the fields $Y(a_{-1}\vac, z)=a(z)$, $Y(a^*_0\vac,z)= a^*(z)$ satisfy 
\begin{align*}
    (a_{-1}\vac)_n=a_n, \qquad (a^*_0\vac)_n=a^*_{n+1},
\end{align*}
we have that $G=\{a_{-1}\vac, a^*_0\vac\}$ is a set of strong generators for the vertex algebra $\WeylVA$.
Namely, $\WeylVA$ is spanned by the set of normally ordered monomials 
$$\{ :(\partial^{k_1} \alpha^{i_1}) \dots (\partial^{k_l} \alpha^{i_l}): \mid \  k_1, \dots, k_l \geq 0, \  \alpha^{i_j}\in  G \}.$$ 
 Therefore, $\WeylVA$ is strongly finitely generated as a vertex algebra.
\end{enumerate}
\end{rem}

The {\it affinized rank one Weyl algebra}  $ \widehat{\Weylalg}$ is the algebra
\begin{equation}\label{A-hat}
\widehat{\Weylalg} = \frac{\mathcal{U}(\Lieghost)}{\langle K-1\rangle },
\end{equation}
where  $\mathcal{U}(\Lieghost)$ denotes the  universal enveloping algebra of $\Lieghost$ and $\langle K - 1\rangle$ is the two sided ideal generated by $K - 1$.
The algebra $\widehat{\Weylalg}$ corresponds to the mode algebra of $\WeylVA$. 
It is an associative algebra with generators $a_m$, $a^*_n$,  for $m, n\in\mathbb{Z}$, and relations
\begin{equation} \label{com}
[a_m,a^*_n]= \delta_{m+n,0}, \qquad
 \left[a_m,a_n \right]= [a^*_m,a^*_n] \ = \ 0.
\end{equation}
The spectral flow automorphisms \eqref{spectral-flow} of the Lie algebra $\mathcal{L}$ lift to automorphisms of $\mathcal{U}(\mathcal{L})$ (and thus of $\widehat{\Weylalg}$) via the Poincaré-Birkhoff-Witt theorem.

From the simple relations between the modes of the strong generators $a_{-1}\vac$ and $a^*_0\vac$ given by \eqref{com} together with Remark \ref{bgrem}, it follows that $\WeylVA$ is a simple vertex algebra.  

Set $J=a_{-1}a^*_0\vac$.
The corresponding field $J(z)=Y(J,z)=\sum_{n\in\mathbb{Z}}J_nz^{-n-1}$ is a Heisenberg vector in $\WeylVA$ of level $-1$ (i.e., $J$ generates a Heisenberg Lie algebra of central charge $-1$).
For $n,m\in\mathbb{Z}$, we have 
\begin{equation}\label{eq:heisenberg_field}
    [J_m,J_n]=-m\delta_{m+n,0}
\end{equation}
as operators on $\WeylVA$, and therefore
\begin{equation}
    J(z)J(w)=-\frac{1}{(z-w)^2}+\no{J(z)J(w)}.
\end{equation}
In addition, we have for all $m,n\in\Z$,
\begin{equation}\label{eq:J_grading}
    [J_m,a_n]=-a_{m+n}, \quad \mbox{ and} \quad [J_m,a^*_n]=a^*_{m+n}.
\end{equation}
The vertex algebra $\WeylVA$ admits a one parameter family of Virasoro vectors
\begin{equation} \label{conformal}
\omega_{\mu} 
=  a_{-1}a^*_{-1}\vac-\mu J_{-2}\vac,\qquad(\mu\in\C)
\end{equation}
with central charge 
\begin{equation} \label{central}
c_{\mu}=2(6\mu(\mu-1)+1).   
\end{equation}
The conformal vertex algebra $\WeylVA$ corresponding to the choice of the conformal vector given by $\omega_{\mu}$ is denoted by $_\mu \WeylVA$ following \cite{BBOHPTY}.

\begin{rem}
In \cite{BBOHPTY}, the affinized rank one Weyl algebra $\widehat{\Weylalg}$ is denoted by $\mathcal{A}_1$ in Section III.  Whereas in Section VI of \cite{BBOHPTY}, there is a typo and the $\mathcal{A}_1$ there should be $\Weylalg$. 
In addition, in \cite{BBOHPTY} the Weyl vertex algebra $_\mu \WeylVA$ was denoted $_\mu M$. 
\end{rem}

In \cite{BBOHPTY} three of the authors of this paper together with Batistelli and Pedi\'c Tomi\'c classified the different $\mathbb{C}$-graded conformal vertex algebra structures obtained on the Weyl vertex algebra $_\mu \WeylVA$ depending on the parameter $\mu$.
As shown in \cite{BBOHPTY}, only when $\mu$ is in a certain convex bounded region of the complex plane does $_\mu \WeylVA$ have the structure of a so-called \emph{admissible $\Omega$-generated $\mathbb{C}$-graded vertex operator algebra}, an analogous notion to that of a vertex operator algebra in the $\mathbb{C}$-graded setting. 
Moreover, for $\mu$ in this bounded convex region, $_\mu \WeylVA$ is rational and admits a unique irreducible ordinary module \cite{BBOHPTY}. 
On the other hand, outside of this specific region, $_\mu \WeylVA$ is only a finitely strongly generated $\mathbb{C}$-graded conformal vertex algebra and not a $\C$-graded analogue of a vertex operator algebra.

In the following, we will focus on the choice of parameter $\mu=0$ so that $_0 \WeylVA$
belongs to the latter family of Weyl vertex algebras.
In particular, it admits infinite dimensional conformal weight spaces. 
Thus, it has the structure of a $\mathbb{Z}_{\geq0}$-graded vertex algebra but not of a vertex {\it operator} algebra.
For simplicity, in the rest of the present paper, we denote $_0\WeylVA$ by $\WeylVA$.
Then, we fix $\omega = \omega_0 = a_{-1} a^*_{-1} {\bf 1}$ so that $\WeylVA$ has central charge $c = c_0 = 2$.
The corresponding Virasoro field $L(z)=Y(\omega,z)=\sum_{n\in\mathbb{Z}}L_nz^{-n-2}$ is given by
\begin{align*}
L(z)=\no{a(z)\partial a^*(z)} 
\end{align*}
and satisfies
\begin{align*}
    L(z) L(w)= \frac{1}{(z-w)^4}+\frac{2L(w)}{(z-w)^2}+\frac{\partial_w L(w)}{z-w}+\no{L(z) L(w)}.
\end{align*}
The vertex algebra $\WeylVA$ is $\Z$-graded by $L_0$ as, for all $m,n\in\Z$,
\begin{equation*}
\left[L_m,a_n \right] = -na_{m+n},\qquad
\left[L_m,a^*_n \right] = -(m+n)a^*_{m+n}.
\end{equation*}
In particular, $L_0$ acts semisimply with
\begin{equation*}
\left[L_0,a_n\right] \ = \ -na_n,\qquad \mbox{and} \qquad 
\left[L_0,a^*_n\right] \ = \ -na^*_n.
\end{equation*}
Notice that for integers $m_1\geq \dots \geq m_k\geq 1$, $n_1\geq \dots \geq n_\ell\geq 0$,  and $k,\ell \in \mathbb{Z}_{>0}$, we have 
\begin{equation*}
\begin{aligned}
    L_0a_{-m_1}\dots a_{-m_k}{\bf 1}&=
    (m_1+ \dots +m_k)a_{-m_1} \dots a_{-m_k}{\bf 1},\\
    L_0a^*_{-n_1}\dots a^*_{-n_\ell}{\bf 1}&=
    (n_1+\dots +n_\ell)a^*_{-n_1}\dots a^*_{-n_\ell}{\bf 1},    
\end{aligned}
\end{equation*}
so that 
\begin{equation}\label{L(0)-grading}
{L_0a_{-m_1} \dots a_{-m_k}a^*_{-n_1} \dots a^*_{-n_\ell}{\bf 1}}=\Big( \sum_{j=1}^km_j+\sum_{j=1}^\ell n_j \Big)  a_{-m_1} \dots a_{-m_k}a^*_{-n_1} \dots a^*_{-n_\ell}{\bf 1}.
\end{equation}
The weight zero subspace of $\WeylVA$ with respect to the $L_0$-grading is 
\begin{equation*}
\WeylVA_0= \mathrm{span}_\mathbb{C} \{ {(a^*_0)}^r {\bf 1} \; | \; r \in \mathbb{Z}_{\geq 0} \}.
\end{equation*}
Since this space $\WeylVA_0$ is infinite dimensional, we have that 
$(\WeylVA_0+C_1(\WeylVA))/C_1(\WeylVA) \subset \WeylVA/C_1(\WeylVA)$ is infinite dimensional where 
\[C_1(\WeylVA) = \mathrm{span} \{ u_{-1}v \in \WeylVA \; | \; u,v\in\WeylVA, \wt(u)>0  \}. \]
Thus in the terminology of the theory of vertex algebras (see, for instance, \cite{Miy2}), $\WeylVA$ is not $C_1$-cofinite. 
Also, the weight $d$ subspace of $\WeylVA$ is given by 
\begin{equation}\label{Md}
    \WeylVA_d =\WeylVA^{\tres}_d\ltimes \WeylVA_0,
\end{equation}
where
\begin{multline*}
\WeylVA^{\tres}_d=
\mathrm{span}_\mathbb{C}\{ a_{-m_1} \dots a_{-m_k}a^*_{-n_1} \dots a^*_{-n_\ell}{\bf 1} \; | \; \sum_{j = 1}^k m_j + \sum_{j=1}^\ell n_j = d, \\
k,\ell \in \mathbb{Z}_{\geq 0},\ m_j, n_j \in \mathbb{Z}_{>0},\ m_1\geq \dots \geq m_k,\ n_1\geq \dots \geq n_\ell \}.
\end{multline*}
The subspace $\WeylVA^{\tres}_d$ is finite dimensional with $\dim \WeylVA_d^{\tres} = |P_2(d)|$, where $P_2(d)$ denotes the set of 2-component multipartitions of $d$.
That is, the set $P_2(d)$ consists in pairs $({\bf \Lambda}^1,{\bf \Lambda}^2)$ of partitions, ${\bf \Lambda}^i=\{\lambda^i_1\geq\lambda^i_2\geq\dots\}$, $\lambda^i_j\in\Z_{\geq0}$, such that $|{\bf \Lambda}^1|+|{\bf \Lambda}^2|=d$. By convention, zero has one  2-component multipartitions denoted by $\emptyset$, while one has two $\{\emptyset,1\}$ and $\{1,\emptyset\}$.
For instance $(4+1,2+2+1)\in P_2(10)$.
This dimension $|P_2(d)|$ of $\WeylVA^{res}_d$ will appear in the $d$th level Zhu algebra of $\WeylVA$.

Furthermore, in addition to the $L_0$-grading,  we have a grading with respect to $J_0$ defined by Eqn.~\eqref{eq:J_grading}. 
Indeed, for all $m_1\geq\dots \geq m_k\geq 1$, $n_1\geq\dots\geq n_\ell \geq 1$, and $k,\ell \in\mathbb{Z}_{>0}$, $r\in\Z_{\geq0}$, we have
\begin{equation*}
J_0 (a_{-m_1} \dots a_{-m_k}a^*_{-n_1} \dots a^*_{-n_\ell} {(a^*_0)}^r {\bf 1})
=(-k+\ell+ r)a_{-m_1} \dots a_{-m_k}a^*_{-n_1}\dots a^*_{-n_\ell} {(a^*_0)}^r {\bf 1}.
\end{equation*} 
We can therefore, further decompose $\WeylVA_d$ into its $J_0$-graded components
\begin{equation}\label{L J grading for M}
\WeylVA=\bigoplus_{d=0}^{\infty}\WeylVA_d=\bigoplus_{d=0}^{\infty}\bigoplus_{j\in\Z}\WeylVA_d^j\end{equation} 
where $\WeylVA_d^j=\{u\in \WeylVA~|~L_0u=du,~J_0u=ju\}$. 
Note that $\dim \WeylVA_d^j<\infty$ and $\WeylVA_d^j=0$ when $j<-d$.
By Eqn. \eqref{L J grading for M}, $\dim{q_J,q_L}(\WeylVA):=\tr_\WeylVA q_J^{J_0}q_L^{L_0-c/24}$ is well defined.
It follows from the isomorphism of vector spaces
\begin{align*}
    \WeylVA\cong \C\lbrack a_{-n-1},a^*_{-n}\mid n\geq0\rbrack {\bf 1}
    \cong \bigotimes_{n\geq0}\C\lbrack a_{-n-1}\rbrack {\bf 1}\otimes\bigotimes_{n\geq0}\C\lbrack a^*_{-n}\rbrack  {\bf 1}
\end{align*}  
that 
\begin{equation*}
\begin{aligned}
    \dim{q_J,q_L}(\WeylVA)
    &=q_L^{-1/12}\left(\prod_{n=0}^\infty\dim_{q_J,q_L}(\C\lbrack a_{-n-1}\rbrack  {\bf 1} )\right)\left(\prod_{n=0}^\infty\dim_{q_J,q_L}(\C\lbrack a^*_{-n}\rbrack  {\bf 1} )\right)\\
    &=q_L^{-1/12} \prod_{n = 1}^\infty (1-q_Jq_L^n)^{-1} (1 - q_J^{-1} q_L^{n-1})^{-1}.
\end{aligned}
\end{equation*}

\begin{rem} In \cite{RW}, it is shown that this doubly-graded trace for $\WeylVA$ and similar doubly-graded traces for certain classes of $\WeylVA$-modules have interesting modular invariance properties under the action of the modular group $SL_2(\mathbb{Z})$.  One of the motivations of the current paper is to see if appropriate classes of $\WeylVA$-modules admit doubly-graded pseudo-traces using, for instance this double grading with respect to $L_0$ and $J_0$, to define the notion analogous to doubly-graded traces in the setting of graded pseudo-traces as defined in, for instance  \cite{BBOHY}.  
In Section \ref{Weyl-modules-first-section}, we show that two recently studied classes of $\WeylVA$-modules do not admit such well-defined nontrivial doubly-graded pseudo-traces since they do not satisfy a key property, namely the indecomposable reducible modules are not weakly interlocked, see Theorem \ref{first-non-interlocked-thm} and Corollary \ref{non-interlocked-cor}.
\end{rem}

\subsection{Mode transition algebras and higher level Zhu algebras for the Weyl vertex algebra} 
In this section, we describe the $d$-th mode transition algebra for the Weyl vertex algebra at central charge 2, $\WeylVA$, for all $d\in \mathbb{Z}_{\geq 0}$.
Since  $\MTA_0(\WeylVA) = \HeZhu{0}(\WeylVA)$ is isomorphic to $\hZhu{0}(\WeylVA)$, we have that $\mathfrak{A}_0 (\WeylVA) \cong \Weylalg$ as shown in \cite{L1} (see also \cite{BBOHPTY}).
To describe $\MTA_d(\WeylVA)$ for $d\geq 1$, we show the following result.

\begin{prop}\label{mode-transition}
For $d \in \mathbb{Z}_{\geq 0}$, there is a natural algebra isomorphism
\begin{equation} \label{mat}
\mathfrak{A}_d(\WeylVA) \cong \Mat_{|P_2(d)|}(\Weylalg) \cong \Weylalg \otimes_{\mathbb{C}} \Mat_{|P_2(d)|}(\mathbb{C}),
\end{equation}
where $|P_2(d)|$ is the number of 2-component multipartitions of $d$ as in Section \ref{subsectionWeyl}. 
In particular,  the $d$-th mode transition algebra, $\mathfrak{A}_d(\WeylVA)$, is unital for every $d \in \mathbb{Z}_{\geq 0}$.
\end{prop}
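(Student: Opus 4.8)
The plan is to compute $\MTA_d(\WeylVA) = \MTA_{d,-d}(\WeylVA)$ directly from its definition in Eqn.~\eqref{eq:MTA}, using the identification $\HeZhu{0}(\WeylVA) \cong \Weylalg$ and the explicit structure of the universal enveloping algebra $\mcU$ of $\WeylVA$, which by the work of Section~\ref{universal-lie-section} is essentially the affinized Weyl algebra $\widehat{\Weylalg}$ suitably completed and $\Z$-graded by $L_0$-degree (i.e., by $\deg v(m) = \wt(v) - m - 1$). First I would describe the graded pieces $(\mcU/\NL^1\mcU)_d$ and $(\mcU/\NR^1\mcU)_{-d}$ as bimodules over $\mcU_0$. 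The key structural input is that modulo $\NL^1\mcU = \mcU\,\mcU_{\leq -1}$, a degree-$d$ element of $\mcU$ can be pushed so that all ``annihilation-type'' modes are moved to the right and killed; what survives is spanned by the images of the PBW-type monomials in the negative modes $a_{-m},a^*_{-n}$ of total $L_0$-weight $d$ acting from the left, tensored with the surviving copy of $\mcU_0 \cong \Weylalg$ generated by $a_0, a^*_0$. Concretely I expect $(\mcU/\NL^1\mcU)_d$ to be a free right $\Weylalg$-module with basis indexed by the finite set $\WeylVA^{\tres}_d$ of $2$-component multipartitions of $d$ — this is exactly where $|P_2(d)|$ enters, via Eqn.~\eqref{Md} and $\dim\WeylVA^{\tres}_d = |P_2(d)|$. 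Symmetrically $(\mcU/\NR^1\mcU)_{-d}$ is a free left $\Weylalg$-module of the same rank.

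Second, I would feed these two descriptions into the tensor product
\[
\MTA_{d,-d} = (\mcU/\NL^1\mcU)_d \otimes_{\mcU_0} \HeZhu{0} \otimes_{\mcU_0} (\mcU/\NR^1\mcU)_{-d}
\]
and use freeness over $\Weylalg \cong \HeZhu{0}$ to collapse the middle factor: the result is a free $\Weylalg$-bimodule with basis indexed by $P_2(d) \times P_2(d)$, i.e.\ as a vector space $\MTA_{d,-d} \cong \Weylalg \otimes_\C \Mat_{|P_2(d)|}(\C)$. Then I would check that the $\star$-product of Eqn.~\eqref{eq:star_product}, built from the pairing $\circledast$ of Eqn.~\eqref{acircle*b}, matches matrix multiplication on the $\Mat_{|P_2(d)|}$-factor and ordinary multiplication on the $\Weylalg$-factor. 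This amounts to computing, for basis elements $\bar\alpha \in (\mcU/\NL^1\mcU)_d$ (indexed by a multipartition $\lambda$) and $\bar\beta \in (\mcU/\NR^1\mcU)_{-d}$ (indexed by $\mu$), the value $\bar\beta \circledast \bar\alpha = [\beta\alpha]_0 \in \Weylalg$, and seeing that it is $\delta_{\mu\lambda}$ times a unit (after normalizing the PBW basis) — the ``orthogonality'' of the multipartition basis under the trace-like pairing $[\,\cdot\,]_0$. Unitality of $\MTA_d(\WeylVA)$ is then immediate since $\Mat_{|P_2(d)|}(\Weylalg)$ has the identity matrix.

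The main obstacle I anticipate is the bookkeeping in the second step: verifying that moving annihilation modes past creation modes inside $\mcU$ and projecting mod $\NL^1$/$\NR^1$ really does produce a \emph{free} $\Weylalg$-module with the multipartition basis, with no extra relations and no collapse. This requires a careful PBW/straightening argument in the completed enveloping algebra $\mcU$, keeping track of the zero-mode subalgebra $\mcU_0$ (which is not just $\Weylalg$ but its completion/quotient $\HeZhu{0}$, so one must confirm $\HeZhu{0}(\WeylVA) \cong \Weylalg$ carefully from \cite{L1}) and checking that the commutators produced along the way land in $\NL^1\mcU$ or contribute only lower-order terms already accounted for. A secondary subtlety is checking that the pairing $\circledast$ is nondegenerate on the multipartition basis in exactly the right normalization so that the induced product is genuine matrix multiplication and not a twisted version; I would handle this by exhibiting explicit dual bases of negative-mode monomials under $[\,\cdot\,]_0$. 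Once freeness and the pairing computation are in hand, the ``natural algebra isomorphism'' of Eqn.~\eqref{mat} and unitality follow formally.
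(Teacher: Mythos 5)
Your proposal is correct and follows essentially the same route as the paper's proof: identify $(\mcU/\NL^1\mcU)_d$ and $(\mcU/\NR^1\mcU)_{-d}$ as free $\Weylalg$-modules with basis indexed by $P_2(d)$, compute that the pairing $\circledast$ is diagonal on this basis with positive integer values $c({\bf m}_k,{\bf n}_\ell)$, and rescale (the paper uses $\sqrt{c}$ normalizations) to match genuine matrix units, whence unitality. The two subtleties you flag --- freeness of the multipartition basis and the normalization of the pairing --- are exactly the points the paper addresses, the latter explicitly and the former by the same straightening argument you describe.
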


\begin{proof}
 
Elements of $\MTA_d(\WeylVA)$ are sums of elements of the form  
\begin{multline}
\epsilon^w_{(({\bf m}_k, {\bf n}_\ell), ({\bf q}_s, {\bf r}_t))} = a(-m_1)  \dots  a(-m_k) a^*(-n_1)  \dots  a^*(-n_\ell) \otimes w\\  
\otimes a^*(q_s) \dots  a^*(q_1)  a(r_t)  \dots a(r_1) 
\end{multline}
where $({\bf m}_k, {\bf n}_\ell) = \left( (m_1, \dots, m_k), (n_1, \dots, n_\ell)\right)$ and $({\bf q}_s, {\bf r}_t)=\left( (q_1, \dots, q_s), (r_1, \dots, r_t)\right)$ are two elements in $ P_2(d)$, $w\in\Weylalg$, and where $a(m)$ denotes the equivalence class of the element $a_{-1}{\bf 1} \otimes t^m$ in $\mathfrak{L}(V)^f$ of degree $-m$, and $a^*(n)$ denotes the equivalence class of the element $a^*_{-1}{\bf 1} \otimes t^n$ in $\mathfrak{L}(V)^f$ of degree $-n$. 
We have that 
\begin{equation}\label{Ud-Weyl-set} 
\MTA_d(\WeylVA) = \Span_{\C}\{ 
\epsilon^w_{(({\bf m}_k, {\bf n}_\ell), ({\bf q}_s, {\bf r}_t))} \mid ({\bf m}_k, {\bf n}_\ell), ({\bf q}_s, {\bf r}_t)\in P_2(d), \ w \in \Weylalg \}.
\end{equation}
Moreover, we can define a $\Weylalg$-module structure on $\mathfrak{A}_d(\WeylVA)$ by 
\begin{equation*}
w'.\epsilon^w_{(({\bf m}_k, {\bf n}_\ell), ({\bf q}_s, {\bf r}_t))} = \epsilon^{w'.w}_{(({\bf m}_k, {\bf n}_\ell), ({\bf q}_s, {\bf r}_t))}
\end{equation*}
for $({\bf m}_k, {\bf n}_\ell), ({\bf q}_s, {\bf r}_t)\in P_2(d)$ and  $w,w' \in \Weylalg$, so that 
the set 
\begin{equation}\label{epsilons} 
\{\epsilon_{(({\bf m}_k, {\bf n}_\ell), ({\bf q}_s, {\bf r}_t))}:=\epsilon^1_{(({\bf m}_k, {\bf n}_\ell), ({\bf q}_s, {\bf r}_t))} \}_{({\bf m}_k, {\bf n}_\ell), ({\bf q}_s, {\bf r}_t) \in P_2(d)}
\end{equation}
freely generates $\MTA_d(\WeylVA)$ as a $\Weylalg$-module, i.e., as $\Weylalg$-module 
\begin{equation*}
\mathfrak{A}_d(\WeylVA) \cong \Weylalg \otimes_{\mathbb{C}} \mathrm{span}_\mathbb{C} \{\epsilon_{(({\bf m}_k, {\bf n}_\ell), ({\bf q}_s, {\bf r}_t))} \}_{({\bf m}_k, {\bf n}_\ell), ({\bf q}_s, {\bf r}_t) \in P_2(d)}.
\end{equation*} 

The algebra structure of $\MTA_d(\WeylVA)$ is given by the product $\star$, defined in Section \ref{sec:MTA_Zgraded}, which depends on the product $\circledast$ of two elements in $\mathscr{U}(\WeylVA)$ , resulting in an element in $A_0(\WeylVA)= \Weylalg$.
In our setting, we have
\begin{multline*} 
a^*(q_s)  \dots  a^*(q_1)  a(r_t)  \dots a(r_1) \circledast a(-m_1)  \dots  a(-m_k) 
a^*(-n_1)  \dots  a^*(-n_\ell) \\
    =\begin{cases}
    c({\bf m}_k, {\bf n}_\ell) & \text{if } ({\bf q}_s, {\bf r}_t)= ({\bf m}_k, {\bf n}_\ell),\\
    0 & \text{otherwise},
    \end{cases}
\end{multline*} 
where $c({\bf m}_k, {\bf n}_\ell)$ is a positive integer that only depends on $({\bf m}_k, {\bf n}_\ell)$. 
Thus in $\mathfrak{A}_d(\WeylVA)$,
\begin{equation}\label{circle-product}
 \epsilon_{(({\bf m}'_{k'}, {\bf n}'_{\ell'}), ({\bf q}_s, {\bf r}_t))} \star \epsilon_{(({\bf m}_k, {\bf n}_\ell), ({\bf q}'_{s'}, {\bf r}'_{t'}))} 
    =\begin{cases}
        c({\bf m}_k, {\bf n}_\ell) \epsilon_{(({\bf m}'_{k'}, {\bf n}'_{\ell'}), ({\bf q}'_{s'}, {\bf r}'_{t'}))} & \text{if } ({\bf q}_s, {\bf r}_t)= ({\bf m}_k, {\bf n}_\ell),\\
        0 & \text{otherwise}.
    \end{cases}
\end{equation}
Therefore, there is an algebra isomorphism between the space spanned by the vectors \eqref{epsilons} and $\Mat_{|P_2(d)|}(\mathbb{C})$, identifying $\epsilon_{(({\bf m}_k, {\bf n}_\ell), ({\bf q}_s, {\bf r}_t))}$ with the elementary matrix denoted $\mathfrak{I}_{(({\bf m}_k, {\bf n}_\ell), ({\bf q}_s, {\bf r}_t))}$ in $\Mat_{|P_2(d)|}(\mathbb{C})$ having $ \sqrt{c({\bf m}_k, {\bf n}_\ell)}\sqrt{c ({\bf q}_s, {\bf r}_t))} $ in the $(({\bf m}_k, {\bf n}_\ell), ({\bf q}_s, {\bf r}_t))$-entry, and zero otherwise. 
Thus as an algebra $\mathfrak{A}_d(\WeylVA) \cong \Weylalg \otimes \Mat_{|P_2(d)|}(\mathbb{C})$.  

In addition, this implies that  $\mathfrak{A}_d(\WeylVA)$ is unital for every $d \in \mathbb{Z}_{\geq 0}$  with unity canonically isomorphic to $1 \otimes I_{|P_2(d)|}$ where $1 \in \Weylalg$ and $I_{|P_2(d)|} \in \Mat_{|P_2(d)|}$ the identity matrix.
\end{proof}

From Proposition \ref{mode-transition}, we have that the mode transition algebra $ \mathfrak{A}_d(\WeylVA)$  is unital. We now prove that the unities in $\mathfrak{A}_d(\WeylVA)$, for each $d \in \mathbb{Z}_{\geq 0}$, form a family of strong unities in the sense of Definition~\ref{def:strong_id}.  

\begin{thm}\label{weyl-unity-thm}
The unity elements $\mathscr{I}_d =1\otimes I_{|P_2(d)|}$ of the mode transitions algebras $\mathfrak{A}_d(\WeylVA) \cong \Weylalg\otimes \Mat_{|P_2(d)|}(\mathbb{C})$ form a family of strong unities.  
\end{thm}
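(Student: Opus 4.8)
The plan is to verify condition $(iii)$ of Proposition~\ref{properties-of-Is}, namely that for each $d \in \mathbb{Z}_{\geq 0}$ the natural homomorphisms $\mathfrak{A}_d(\WeylVA) \to \mathrm{End}(\Phi^L(\HeZhu{0})_d)$ and $\mathfrak{A}_d(\WeylVA) \to \mathrm{End}(\Phi^R(\HeZhu{0})_{-d})$ are unital; by that Proposition this is equivalent to $\{\mathscr{I}_d\}$ being a family of strong unities. Since Proposition~\ref{mode-transition} already identifies $\mathfrak{A}_d(\WeylVA) \cong \Weylalg \otimes \Mat_{|P_2(d)|}(\mathbb{C})$ as an \emph{abstract} algebra with unit $1 \otimes I_{|P_2(d)|}$, what remains is genuinely about how this algebra \emph{acts}: one must show the unit $\mathscr{I}_d$ acts as the identity operator on $\Phi^L(\HeZhu{0})_d$ (and symmetrically on $\Phi^R(\HeZhu{0})_{-d}$), rather than merely being an abstract idempotent.

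First I would make explicit the identification, already used in the proof of $(i)\Rightarrow(iii)$ in Proposition~\ref{properties-of-Is}, that $\Phi^L(\HeZhu{0})_d \cong \mathfrak{A}_{d,0} = \left(\mcU/\NL^1\mcU\right)_d \otimes_{\mcU_0} \HeZhu{0} \otimes_{\mcU_0}\left(\mcU/\NR^1\mcU\right)_0$. Under this identification a typical spanning vector is $a(-m_1)\cdots a(-m_k)a^*(-n_1)\cdots a^*(-n_\ell) \otimes w \otimes 1$ with $(({\bf m}_k,{\bf n}_\ell)) \in P_2(d)$ and $w \in \Weylalg$. The action of $\mathscr{I}_d \star -$ on such a vector unfolds, via the $\star$-product \eqref{eq:star_product} and the definition of $\circledast$, into a sum over the matrix components of $\mathscr{I}_d = \sum_{(({\bf q}_s,{\bf r}_t))} \epsilon_{((({\bf q}_s,{\bf r}_t)),(({\bf q}_s,{\bf r}_t)))}$; the pairing $\circledast$ picks out only the term where $(({\bf q}_s,{\bf r}_t)) = (({\bf m}_k,{\bf n}_\ell))$, contributing the factor $c({\bf m}_k,{\bf n}_\ell)$, exactly as in \eqref{circle-product}. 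The same positive-integer normalization $\sqrt{c(\cdot)}$ built into the elementary matrices $\mathfrak{I}_{(\cdot,\cdot)}$ in the proof of Proposition~\ref{mode-transition} then shows this factor is cancelled, so $\mathscr{I}_d$ acts as $\mathrm{Id}$ on $\Phi^L(\HeZhu{0})_d$; the computation for $\Phi^R(\HeZhu{0})_{-d}$ is entirely analogous using the right-module structure. This gives $(iii)$, hence $(i)$.

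An alternative, perhaps cleaner, route is to invoke Proposition~\ref{mode-transition} more directly: the isomorphism $\mathfrak{A}_d(\WeylVA) \cong \Mat_{|P_2(d)|}(\Weylalg)$ is, by construction in that proof, compatible with the $\MTA$-module structures on $\Phi^L(\HeZhu{0})_d \cong \MTA_{d,0}$ and $\Phi^R(\HeZhu{0})_{-d} \cong \MTA_{0,-d}$, which correspond respectively to a column and a row of $\Mat_{|P_2(d)|}(\Weylalg)$; left (resp.\ right) matrix multiplication by the identity matrix $1 \otimes I_{|P_2(d)|}$ manifestly acts as the identity on a column (resp.\ row). I expect the main obstacle to be purely bookkeeping: one must confirm that the normalization constants $c({\bf m}_k, {\bf n}_\ell)$ and the square-root rescalings introduced in Proposition~\ref{mode-transition} interact correctly with the $\star$-product so that the abstract unit really is realized as $\mathrm{Id}$ and not some scalar multiple, and that no degree-mismatch term in \eqref{acircle*b} is overlooked. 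Once the module identifications $\Phi^L(\HeZhu{0})_d \cong \MTA_{d,0}$, $\Phi^R(\HeZhu{0})_{-d} \cong \MTA_{0,-d}$ are taken as $\MTA_d$-modules (not merely vector spaces), the strong-unity property is immediate from Proposition~\ref{properties-of-Is}$(iii)$.
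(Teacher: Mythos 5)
Your proposal is correct, and the underlying computation is the same one the paper performs: the $\star$-product with $\mathscr{I}_d$ collapses, via the pairing $\circledast$ of Eqn.~\eqref{acircle*b}, to the single matching index in $P_2(d)$, and the resulting factor $c({\bf m}_k,{\bf n}_\ell)$ is cancelled by the normalization built into $\mathscr{I}_d$. The difference is in the packaging. The paper verifies the definition of a family of strong unities (condition $(i)$ of Proposition~\ref{properties-of-Is}) directly: it writes an arbitrary $\mathfrak{a}\in\mathfrak{A}_{n,-m}(\WeylVA)$ as a combination of elements $w\otimes\epsilon_{(({\bf b}^n(i),{\bf u}^n(j)),({\bf b}^m(k),{\bf u}^m(\ell)))}$ and computes $\mathscr{I}_n\star\mathfrak{a}=\mathfrak{a}$ and $\mathfrak{a}\star\mathscr{I}_m=\mathfrak{a}$ from Eqn.~\eqref{circle-product}, so it never needs the equivalence of conditions. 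You instead verify condition $(iii)$ on $\Phi^L(\HeZhu{0})_d\cong\MTA_{d,0}$ and $\Phi^R(\HeZhu{0})_{-d}\cong\MTA_{0,-d}$ and then invoke $(i)\iff(iii)$ from Proposition~\ref{properties-of-Is}; this works because the $\star$-product only ever sees one side of the bidegree, so acting on $\MTA_{d,0}$ is no loss compared to acting on $\MTA_{n,-m}$. What the paper's route buys is independence from Proposition~\ref{properties-of-Is}; what yours buys is that you only need to check the action on the two ``extremal'' components. One small correction to your write-up: the unity is not $\sum_{j}\epsilon_{(j,j)}$ as displayed in your second paragraph, but $\sum_{j}\tfrac{1}{c({\bf b}^d(j),{\bf u}^d(j))}\,\epsilon_{(j,j)}$ --- this is precisely the normalization issue you flag at the end, and it is the source of the cancellation of $c({\bf m}_k,{\bf n}_\ell)$ that makes $\mathscr{I}_d$ act as the identity rather than as a diagonal operator with positive integer eigenvalues.
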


\begin{proof} 
 
First, $I_{|P_2(d)|}$ is identified with an element in $\mathfrak{A}_d(\WeylVA)$, via a choice of basis for $P_2(d)$ denoted  $\{({\bf b}^d(j), {\bf u}^d(j))\}_{j = 1}^{|P_2(d)|}$, so that each $({\bf b}^d(j), {\bf u}^d(j))=({\bf m}_k, {\bf n}_\ell)$ for some $({\bf m}_k, {\bf n}_\ell) \in P_2(d)$ with $m_1 + \cdots + m_k + n_1 + \cdots + n_\ell = d$.
Then with respect to this basis  $I_{|P_2(d)|}$ corresponds to 
\[\sum_{j = 1}^{|P_2(d)|} \frac{1}{c({\bf b}^d(j), {\bf u}^d(j))}\epsilon_{(({\bf b}^d(j), {\bf u}^d(j), ({\bf b}^d(j), {\bf u}^d(j))} . \] 

Moreover, every $\mathfrak{a} \in \mathfrak{A}_{n,-m}(\WeylVA)$ can be written as a linear combination of elements of the form $w \otimes \epsilon_{(({\bf b}^n(i), {\bf u}^n(j)), ({\bf b}^m(k), {\bf u}^m(\ell))}$ for $w \in \Weylalg$.
Using Eqn.\ \eqref{circle-product},  we have that  
\begin{eqnarray*} 
\lefteqn{\mathscr{I}_n \star (w \otimes \epsilon_{(({\bf b}^n(i), {\bf u}^n(j)), ({\bf b}^m(k), {\bf u}^m(\ell))})}\\
&=&  (1 \otimes I_{|P_2(n)| }) \star (w \otimes \epsilon_{(({\bf b}^n(i), {\bf u}^n(j)), ({\bf b}^m(k), {\bf u}^m(\ell))})  \\
&=& 1\cdot w \otimes (I_{|P_2(n)|} \star \epsilon_{(({\bf b}^n(i), {\bf u}^n(j)), ({\bf b}^m(k), {\bf u}^m(\ell))}  ) \\
&=& w \otimes \Big(\sum_{s = 1}^{|P_2(n)|} \frac{1}{c({\bf b}^n(s), {\bf u}^n(s))}\epsilon_{(({\bf b}^n(s), {\bf u}^n(s), ({\bf b}^n(s), {\bf u}^n(s))}\star \epsilon_{(({\bf b}^n(i), {\bf u}^n(j)), ({\bf b}^m(k), {\bf u}^m(\ell))}  \Big) \\
&=& w \otimes \frac{1}{c({\bf b}^n(i), {\bf u}^n(j))} \epsilon_{(({\bf b}^n(i), {\bf u}^n(j)), ({\bf b}^n(i), {\bf u}^n(j))) } \star  \epsilon_{(({\bf b}^n(i), {\bf u}^n(j)), ({\bf b}^m(k), {\bf u}^m(\ell))} \\
&=& w \otimes  \epsilon_{(({\bf b}^n(i), {\bf u}^n(j)), ({\bf b}^m(k), {\bf u}^m(\ell))}  .
\end{eqnarray*}
Extending by linearity to any $\mathfrak{a} \in \mathfrak{A}_{n,-m}(\WeylVA)$, we have that $\mathscr{I}_n \star \mathfrak{a} = \mathfrak{a}$.
The fact that $\mathfrak{a} = \mathfrak{a} \star \mathscr{I}_m$ for all $\mathfrak{a} \in \mathfrak{A}_{n,-m}(\WeylVA)$ is proved analogously. 
\end{proof}

In \cite{L1}, it is shown that the level zero Zhu algebra $\hZhu{0}(\WeylVA)$ is the Weyl algebra $\Weylalg$.   
Moreover, preliminary computations with Addabbo in \cite{BBOHPTY} suggested that $\mathrm{A}_1(\WeylVA) \cong \Weylalg \oplus (\Weylalg \otimes \Mat_2(\mathbb{C}))$. Next we note that  Proposition \ref{mode-transition} and  Theorem \ref{thm:unity} allow us to describe 
all higher level Zhu algebras, in particular, verifying the conjecture in \cite{BBOHPTY} for $\mathrm{A}_1(\WeylVA)$.

\begin{cor}\label{Zhu-alg} 
For $d\geq 1$, the $d$-th level Zhu algebra of the Weyl algebra of central charge 2, $\WeylVA$, satisfies  
\begin{equation*}
    \hZhu{d}(\WeylVA) \cong \mathrm{A}_d(\WeylVA) \cong \bigoplus_{j = 0}^d (\Weylalg \otimes \Mat_{|P_2(j)|}(\mathbb{C})),
\end{equation*}
where $|P_2(d)|$ is the number of 2-component multipartitions of $d$.
\end{cor}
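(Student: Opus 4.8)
The plan is to combine the two main structural inputs already established: Proposition~\ref{mode-transition}, which identifies the $d$-th mode transition algebra $\MTA_d(\WeylVA)$ with $\Weylalg\otimes_\C\Mat_{|P_2(d)|}(\C)$ and in particular shows each $\MTA_d(\WeylVA)$ is unital, together with the Splitting Theorem~\ref{thm:unity}, which says that whenever $\MTA_j$ admits a unity for all $0\le j\le d$ one has the ring isomorphism $\HeZhu{d}(V)\cong\bigoplus_{j=0}^d\MTA_j$. One also needs Proposition~\ref{define-Zhu-alg-prop} to translate between the two realizations of the Zhu algebra, i.e.\ $\hZhu{d}(\WeylVA)\cong\HeZhu{d}(\WeylVA)$, since the statement of the Corollary is phrased in terms of $\hZhu{d}$.

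Concretely, I would proceed as follows. First, invoke Proposition~\ref{mode-transition} to record that for every $j\in\Z_{\ge0}$ the mode transition algebra $\MTA_j(\WeylVA)$ is unital with $\MTA_j(\WeylVA)\cong\Weylalg\otimes_\C\Mat_{|P_2(j)|}(\C)$; note in passing that $|P_2(0)|=1$ so that $\MTA_0(\WeylVA)\cong\Weylalg$, consistent with the known computation $\hZhu{0}(\WeylVA)\cong\Weylalg$ of \cite{L1}. Second, since $\MTA_j(\WeylVA)$ is unital for all $0\le j\le d$, apply the second part of Theorem~\ref{thm:unity} to conclude
\[
\HeZhu{d}(\WeylVA)\cong\bigoplus_{j=0}^d\MTA_j(\WeylVA)\cong\bigoplus_{j=0}^d\bigl(\Weylalg\otimes_\C\Mat_{|P_2(j)|}(\C)\bigr).
\]
Third, apply Proposition~\ref{define-Zhu-alg-prop} to identify $\hZhu{d}(\WeylVA)=\WeylVA/O_d(\WeylVA)$ with $\HeZhu{d}(\WeylVA)=\mcU_0/\mathrm{N}^{d+1}\mcU_0$, which yields the stated chain of isomorphisms. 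For $d\ge1$ this is exactly the claimed description, and for $d=1$ it specializes to $\hZhu{1}(\WeylVA)\cong\Weylalg\oplus(\Weylalg\otimes\Mat_2(\C))$, confirming the conjecture attributed to \cite{BBOHPTY} (recall $|P_2(1)|=2$).

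I do not expect any genuine obstacle here: the Corollary is a direct consequence of results proved earlier in the paper, and the proof is essentially a two-line citation argument once Proposition~\ref{mode-transition} is in hand. The only point deserving a word of care is making sure the isomorphism in Theorem~\ref{thm:unity} is applied with the hypothesis verified at \emph{every} level $j\le d$ — which is exactly what Proposition~\ref{mode-transition} provides uniformly in $d$ — and being explicit that the ring structure on the right-hand side is the direct sum (product) of algebras, with the block $\Weylalg\otimes\Mat_{|P_2(j)|}(\C)$ being the image of the idempotent $\mu_j(\mathscr{I}_j)$ under the iterated splitting. So the heart of the matter was really the earlier computation of the mode transition algebras and their unities, and this Corollary merely harvests it.
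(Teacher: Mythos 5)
Your proposal is correct and follows exactly the paper's own argument: the paper's proof is precisely the two-line citation of Proposition~\ref{mode-transition} (for unitality and the matrix-algebra description of each $\MTA_j(\WeylVA)$) followed by Theorem~\ref{thm:unity}, with the identification $\hZhu{d}\cong\HeZhu{d}$ from Proposition~\ref{define-Zhu-alg-prop} used implicitly. Your additional remarks about $|P_2(0)|=1$, the $d=1$ case, and the idempotents $\mu_j(\mathscr{I}_j)$ are accurate but not needed beyond what the paper records.
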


\begin{proof}
It follows from Proposition \ref{mode-transition} that the algebras $\mathfrak{A}_d(\WeylVA)$ are unital. We then apply Theorem \ref{thm:unity}.  
\end{proof}

\section{Modules for \texorpdfstring{$\Z$}{Z}-graded vertex algebras}\label{modules-section} 
In this Section, we introduce various notions of modules for a $\mathbb{Z}$-graded vertex algebra $V$.  
However, we will quickly restrict to the types of $V$-modules we are interested in for the rest of the paper, namely $V$-modules which are $\mathbb{Z}_{\geq 0}$-gradable. Then, we recall the induction functor from modules for a Zhu algebra for $V$ to 
$\mathbb{Z}_{\geq 0}$-gradable $V$-modules from \cite{DLM} in the setting of vertex operator algebras which we extend to the setting of $\mathbb{Z}_{\geq 0}$-graded vertex algebras.  Then, we recall Li's $\mathbf{\Delta}$ operator \cite{Li97} in the setting of vertex operator algebras which we extend here to the setting of vertex algebras, as well as the specific operator that gives rise to spectral flow of $V$-modules.  

Finally we recall the notion of weakly interlocked (cf. \cite{BBOHY}) and prove results about when $V$-modules are weakly interlocked when they are induced from Zhu algebras, and also prove a result characterizing whether or not a weakly interlocked modules subjected to Li's $\mathbf{\Delta}$ operator (including the spectral flow operator) will necessarily be weakly interlocked. 

\subsection{Definitions}

Let $V$ be a $\mathbb{Z}$-graded vertex algebra.

\begin{defn}\label{N-gradable-definition}

    A \emph{$\mathbb{Z}_{\geq 0}$-gradable $V$-module} is a weak $V$-module $W$ that is $\mathbb{Z}_{\geq 0}$-gradable, $W = {\bigoplus}_{k \in \mathbb{Z}_{\geq 0}} W(k)$, with $v_m W(k) \subset W(k + \wt(v) - m -1)$ for homogeneous $v \in V$, $m \in \Z$ and $k \in \mathbb{Z}_{\geq 0}$. Without loss of generality, we can and do assume $W(0) \neq 0$, unless otherwise specified.  We say  that non-zero elements of $W(k)$ have \emph{degree} $k \in \mathbb{Z}_{\geq 0}$.  
\end{defn}

Additionally, if $V$ is a conformal $\mathbb{Z}$-graded vertex algebra, we recall the following definition.
\begin{defn}
 A \emph{$\mathbb{Z}_{\geq 0}$-gradable generalized $V$-module} is a $\mathbb{Z}_{\geq 0}$-gradable weak $V$-module $W$ that admits a decomposition into generalized eigenspaces via the spectrum of $L_0 = \omega_1$ as follows: $W={\bigoplus}_{\lambda \in{\C}}W_\lambda$ where $W_{\lambda}=\{w\in W \mid (L_0 - \lambda \, id_W)^j w= 0 \ \mbox{for some $j \in \mathbb{Z}_{>0}$}\}$, and in addition, $W_{\lambda+n}=0$ for a fixed $\lambda$ and all sufficiently small integers $n$. We say that non-zero elements of $W_\lambda$ have \emph{weight} $\lambda \in \mathbb{C}$.

\end{defn}

\begin{rem}
In the definitions above, we have chosen to restrict to $\mathbb{Z}_{\geq 0}$-gradable (generalized) $V$-modules as opposed to $\mathbb{Z}$-gradable, since these correspond to ``positive energy" modules in the sense of physical models, where the grading $\Delta$ ($L_0$ in the conformal case) defines the energy level.  And, since we want $V$ itself to be a $\mathbb{Z}_{\geq 0}$-gradable (generalized) $V$-module,  we restrict to $\mathbb{Z}_{\geq 0}$-gradable vertex algebras going forward.
\end{rem}

\subsection{\texorpdfstring{$\Z_{\geq0}$}{Z+}-gradable modules and Zhu's correspondence}\label{subsec:zhu_correspondence}

Next, we revisit the functors $\Omega_n$ and $\mathscr{L}_n$, for $n \in \mathbb{Z}_{\geq 0}$, originally defined and studied in \cite{DLM} in the context of a vertex operator algebra $V$ and $\mathbb{Z}_{\geq 0}$-gradable $V$-modules. In this work, we extend their definitions to the setting of $\mathbb{Z}_{\geq 0}$-gradable vertex algebras.

Let $W$ be a $\mathbb{Z}_{\geq 0}$-gradable $V$-module, and set
\begin{equation*}
\Omega_n(W) = \{w \in W \; | \; v_jw = 0\;\text{if } \wt (v) -j-1< -n \; 
\text{for }v\in V \text{ of homogeneous weight}\}.
\end{equation*}
Then $\Omega_n(W)$ is an $\HeZhu{n}(V)$-module, via the action $[a] \mapsto o(a) = a_{\wt(a) -1}$ for $a \in V$ and $[a] = a + O_n(V)$.
This was proved in \cite{DLM} for $V$ a vertex operator algebra but their proof can be extended to the setting of $\mathbb{Z}_{\geq 0}$-gradable vertex algebras straightforwardly, replacing $L_{-1}$ and $L_0$ by the translation and the grading operators respectively.
The functor $\Omega_n:W\mapsto\Omega_n(W)$ is then a covariant functor for the category of $\Z_{\geq0}$-gradable $V$-modules to the one of $\Zhu_n(V)$-modules.

\begin{rem} 
The functor $\Omega_n$ from $\mathbb{Z}_{\geq 0}$-gradable $V$-modules to $\Zhu_n(V)$-modules we have defined here is the same functor defined in \cite{DLM} and \cite{BVY}, but is not the functor called $\Omega_n$ in \cite{Eke11}; rather in \cite{Eke11} the functor denoted $\Omega_n$ is the projection functor onto the $n$-th graded subspace of $W$.  Denoting this projection functor by $\Pi_n$, we have that for $W$ a $\mathbb{Z}_{\geq 0}$-gradable $V$-module,  $\Pi_n(W) = W(n)$.
\end{rem}

Given a weak $V$-module $W$, consider the following linear map
\begin{equation}\label{defining-phi}
\begin{aligned}
\psi_W : \quad U & \longrightarrow \End(W)\\
v^1(m_1)v^2(m_2) \dots v^k(m_k) & \longmapsto (v^1_W)_{m_1}(v^2_W)_{m_2} \dots (v^k_W)_{m_k}  ,
\end{aligned}
\end{equation}
where $(v_W)_m$ is the coefficient of $x^{-m-1}$ in the vertex operator action $Y_W(v,x)$ on the module $W$. 
We will often denote $\psi_W$ by $\psi$ and $(v_W)_m$ by $v_m$  if the module is $V$ itself or if $W$ is clearly implied.

The Zhu algebra $\Zhu_n(V)$ can be regarded as a Lie algebra via the bracket $[u,v] = u *_n v - v *_n u$, and then the map $v( \wt(v) -1) \mapsto v + O_n(V)$ is a well-defined Lie algebra epimorphism from $\mathfrak{L}(V)^f(0)$ onto $\Zhu_n(V)$.
Since $\Zhu_n(V)$ is naturally a Lie algebra homomorphic image of $\mathfrak{L}(V)^f(0)$, a $\Zhu_n(V)$-module $Z$ can be lifted to a $\mathfrak{L}(V)^f(0)$-module, and then to a module for 
$\mathcal{P}_n = \bigoplus_{p < -n} \mathfrak{L}(V)^f(p) \oplus \mathfrak{L}(V)^f(0)$ by letting $\mathfrak{L}(V)^f(p)$ acts trivially for $p\neq 0$.  
Set
\[M_n(Z) = \mbox{Ind}_{\mathcal{P}_n}^{\mathfrak{L}(V)^f}(Z) = U(\mathfrak{L}(V)^f)\otimes_{U(\mathcal{P}_n)}Z.\]
We impose a grading on $M_n(Z)$ by giving $Z$ degree $n$, and by letting $M_n(Z)(k)$, for $k\in\Z$, to be the subspace of $M_n(Z)$ induced from $\mathfrak{L}(V)^f$, i.e., $M_n(Z)(k) = U(\mathfrak{L}(V)^f)_{k-n}Z$. 

For $v \in V$, define $Y_{M_n(Z)}(v,x) \in \mathrm{End} (M_n(Z))((x))$ by
\begin{equation}\label{define-Y_M}
Y_{M_n(Z)}(v,x) = \sum_{m\in\mathbb{Z}} v(m) x^{-m-1}
\end{equation} 
and let $W_{Z}$ be the subspace of $M_n(Z)$ spanned linearly by the coefficients of 
\begin{multline}\label{relations-for-M}
(x_0 + x_2)^{\wt(v) + n} Y_{M_n(Z)}(v, x_0 + x_2) Y_{M_n(Z)}(w, x_2) u \\ 
- (x_2 + x_0)^{\wt( v) + n} Y_{M_n(Z)}(Y(v, x_0)w, x_2) u
\end{multline}
for $v,w \in V$, with $v$ homogeneous, and $u \in Z$.  
It is shown in \cite[Theorem 4.1]{DLM} that if $V$ is a vertex operator algebra and $Z$ is a $\Zhu_n(V)$-module which does not factor through $\Zhu_{n-1}(V)$, then 
\[\overline{M}_n(Z) := M_n(Z)/U (\mathfrak{L}(V)^f)W_Z \]
is an $\mathbb{Z}_{\geq 0}$-gradable $V$-module, $\overline{M}_n(Z) = \bigoplus_{k \in \mathbb{Z}_{\geq 0}} \overline{M}_n(Z) (k)$, with $\overline{M}_n(Z) (0)\neq 0$ and $\overline{M}_n(Z) (n) \cong Z$ as $\Zhu_n(V)$-module. Note that the condition that $Z$ itself does not factor through $\Zhu_{n-1}(V)$ is indeed a necessary and sufficient condition for $\overline{M}_n(Z) (0)\neq 0$ to hold. 
This result still holds if we replace $V$ with a $\mathbb{Z}_{\geq0}$-gradable vertex algebra.

It is also observed in \cite{DLM}, in the setting of vertex operator algebras, that $\overline{M}_n(Z)$ satisfies the following universal property:  for any weak $V$-module $M$ and any $\Zhu_n(V)$-module homomorphism $\Psi: Z \longrightarrow \Omega_n(M)$, there exists a unique weak $V$-module homomorphism $\overline{\Psi}: \overline{M}_n(Z) \longrightarrow M$, such that $\overline{\Psi} \circ \iota = \Psi$ where $\iota$ is the natural injection of $Z$ into $\overline{M}_n(Z)$. This follows from the fact that $\overline{M}_n(Z)$ is generated by $Z$ as a weak $V$-module. 
Again, this universal property of $\overline{M}_n(Z)$ still holds if we allow $V$ to be a $\mathbb{Z}_{\geq 0}$-gradable vertex algebra.

Let $Z^* = \mbox{Hom}(Z, \C)$.  As in the construction in \cite{DLM}, $Z^*$ can be extended to $M_n(Z)$ first by inducing on $M_n(Z)(n)$, and then by letting $Z^*$ annihilate $\bigoplus_{k \neq n} M_n(Z)(k)$. 
Indeed, we have that elements of $M_n(Z)(n) = U(\mathfrak{L}(V)^f)_0Z$ are spanned by elements of the form 
\[o_{p_1}(a^1) \dots o_{p_s}(a^s)Z\]
where $s \in \mathbb{Z}_{\geq 0}$, $p_1 \geq \dots \geq p_s\geq -n$, $p_i \neq 0$, $p_1 + \dots + p_s =0$, $a^i \in V$ and $o_{p_i}(a^i) = a^i(\wt(a^i) - 1 - p_i)$. Then, inducting on $s$ by using \cite[Remark 3.3]{DLM} to reduce from length $s$ vectors to length $s-1$ vectors, we have a well-defined action of $Z^*$ on $M_n(Z)(n)$.  

Set $\mathscr{L}_n(Z) = M_n(Z)/\mathcal{J}$ where
\[\mathcal{J} = \{v \in M_n(Z) \, | \, u'(xv) = 0 \;\mbox{for all}\; u' \in U^{*}, x \in U(\hat{V})\}.\]
It is shown in \cite{DLM} that if $V$ is a vertex operator algebra, and the $\Zhu_n(V)$-module $Z$ does not factor through $\Zhu_{n-1}(V)$, then $\mathscr{L}_n(Z)$ is a well-defined $\mathbb{Z}_{\geq 0}$-gradable $V$-module with $\mathscr{L}_n(Z)(0) \neq 0$; in particular, it is shown that $U(\mathfrak{L}(V)^f)W_Z \subset \mathcal{J}$, for $W_Z$ the subspace of $M_n(Z)$ spanned by the coefficients of \eqref{relations-for-M}, i.e., giving the associativity relations for the weak vertex operators on $M_n(Z)$.
We have the following result proved in \cite{BVY}, which is a necessary modification to what was presented in \cite[Theorem 4.2]{DLM}, and its extended version to the setting of $\mathbb{Z}_{\geq 0}$-graded vertex algebras which follows the same proof (up to changing $L_{-1}$ and $L_0$ appropriately here again).
\begin{prop}
\phantom{x}
\begin{enumerate}[leftmargin=*]
    \item\cite{BVY} For $V$ a vertex operator algebra and $n \in \mathbb{Z}_{\geq 0}$, let $Z$ be a nonzero $\Zhu_n(V)$-module such that if $n>0$, then $Z$ does not factor through $\Zhu_{n-1}(V)$. Then $\mathscr{L}_n(Z)$ is a $\mathbb{Z}_{\geq 0}$-gradable $V$-module with $\mathscr{L}_n(Z)(0) \neq 0$.  If we assume further that there is no nonzero submodule of $Z$ that factors through $\Zhu_{n-1}(V)$, then $\Omega_n/\Omega_{n-1}(\mathscr{L}_n(Z)) \cong Z$.
    
    \item For $V$ a $\mathbb{Z}_{\geq 0}$-graded vertex algebra and $n \in \mathbb{Z}_{\geq 0}$, let $Z$ be a nonzero $\Zhu_n(V)$-module such that if $n>0$, then $Z$ does not factor through $\Zhu_{n-1}(V)$. Then $\mathscr{L}_n(Z)$ is a $\mathbb{Z}_{\geq 0}$-gradable generalized  $V$-module with $\mathscr{L}_n(Z)(0) \neq 0$.  If we assume further that there is no nonzero submodule of $Z$ that factors through $\Zhu_{n-1}(V)$, then $\Omega_n/\Omega_{n-1}(\mathscr{L}_n(Z)) \cong Z$.
\end{enumerate} 
\end{prop}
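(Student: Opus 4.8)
The first claim is quoted verbatim from \cite{BVY}, which supplies a necessary modification of \cite[Theorem 4.2]{DLM}, so there is nothing to prove there beyond citing it; the work is in the second claim, i.e.\ in removing the vertex operator algebra hypothesis. The plan is to show that every step of the construction and proof in \cite{DLM} and its correction in \cite{BVY} survives once one replaces $L_{-1}$ by the translation operator $T$ and $L_0$ by the grading operator $\Delta$. These are the only two structural inputs the \cite{DLM} argument uses beyond the bare vertex-algebra axioms: the conformal element enters only through the $L_{-1}$-derivative property, which becomes $Y(Tv,x)=\tfrac{d}{dx}Y(v,x)$ (valid in any vertex algebra), and the $L_0$-grading property $L_0 v=\wt(v)v$, which becomes $\Delta v=\wt(v)v$ (the defining property of a $\mathbb{Z}$-graded vertex algebra). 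Crucially, \emph{no finiteness of graded pieces is ever used}, which is exactly why the vertex operator algebra hypothesis is dispensable and infinite-dimensional weight spaces cause no trouble.

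I would carry this out in three steps. First, one checks that $M_n(Z)=\mathrm{Ind}_{\mathcal{P}_n}^{\mathfrak{L}(V)^f}(Z)$, equipped with the operators $Y_{M_n(Z)}(v,x)=\sum_m v(m)x^{-m-1}$ of \eqref{define-Y_M}, is a lower-truncated $\mathfrak{L}(V)^f$-module on which the vacuum acts as the identity; lower-truncation and weak commutativity of the fields $Y_{M_n(Z)}(v,x)$ follow directly from the bracket \eqref{commutator-larger} and the PBW theorem, none of which sees a conformal structure. Second, one passes to $\mathscr{L}_n(Z)=M_n(Z)/\mathcal{J}$ and verifies the Jacobi identity. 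The essential input here is the inclusion $U(\mathfrak{L}(V)^f)W_Z\subseteq\mathcal{J}$, with $W_Z$ the span of the coefficients of \eqref{relations-for-M}; granting it, the associativity relations \eqref{relations-for-M} hold on $\mathscr{L}_n(Z)$, and, combined with weak commutativity, yield the Jacobi identity by the standard argument (cf.\ \cite{LL}). The proof of this inclusion in \cite{DLM} is the delicate combinatorial part: it proceeds via \cite[Remark 3.3]{DLM}, reducing length-$s$ mode expressions to length $s-1$. That reduction manipulates only modes and the defining relations \eqref{Jacobi-relations}, so it carries over; in the conformal case one also records that $\omega_1$ acts on $\mathscr{L}_n(Z)(k)$ as $k$ plus the action of $[\omega]\in\Zhu_n(V)$ on $Z$, which need not be semisimple, whence the module is $\mathbb{Z}_{\geq 0}$-gradable but only \emph{generalized} rather than $L_0$-semisimple.

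Third, one establishes $\mathscr{L}_n(Z)(0)\neq 0$ and $\Omega_n/\Omega_{n-1}(\mathscr{L}_n(Z))\cong Z$. For the first, $Z^*=\mathrm{Hom}(Z,\C)$ extends to $M_n(Z)$ by inducing on $M_n(Z)(n)$ (again via the length-reduction of \cite[Remark 3.3]{DLM}) and annihilating the other graded pieces; well-definedness of this extension is precisely where the hypothesis that $Z$ does not factor through $\Zhu_{n-1}(V)$ enters, since a length-$s$ expression in $M_n(Z)(n)$ may reduce to lower length with terms involving the image of $Z$ in $\Zhu_{n-1}(V)$. Non-vanishing of the extended functional on $U(\mathfrak{L}(V)^f)_{-n}Z$ then forces $\mathscr{L}_n(Z)(0)\neq 0$. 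For the isomorphism, I would run the \cite{DLM}/\cite{BVY} argument: the natural inclusion $Z\hookrightarrow\Omega_n(\mathscr{L}_n(Z))$ descends to a map $Z\to\Omega_n/\Omega_{n-1}(\mathscr{L}_n(Z))$; surjectivity follows since $\mathscr{L}_n(Z)$ is generated by $Z$ as a weak module together with the universal property of $\overline{M}_n(Z)$ (already extended to $\mathbb{Z}_{\geq 0}$-gradable vertex algebras above), and injectivity uses the stronger hypothesis that $Z$ has no nonzero submodule factoring through $\Zhu_{n-1}(V)$ --- this is the correction supplied in \cite{BVY}, absent which one only obtains a proper quotient of $Z$.

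I expect the main obstacle to be bookkeeping rather than a new idea: one must confirm that the length-reduction procedure of \cite[Remark 3.3]{DLM} --- which underpins both $U(\mathfrak{L}(V)^f)W_Z\subseteq\mathcal{J}$ and the extension of $Z^*$ --- invokes only the bracket relations of $\mathfrak{L}(V)^f$ and the relation $(T+\Delta)v=0$ in $\Zhu_n(V)$, and never secretly uses semisimplicity of $L_0$ or finite-dimensionality of graded pieces. Verifying this, and correctly articulating the generalized-module structure coming from the possibly non-semisimple action of $[\omega]$, is the crux of the extension; everything else is a transcription of \cite{DLM} and \cite{BVY}.
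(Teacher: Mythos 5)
Your proposal is correct and follows essentially the same route as the paper, which gives no detailed argument here but simply asserts that the proofs in \cite{DLM} and \cite{BVY} carry over to $\mathbb{Z}_{\geq 0}$-graded vertex algebras once $L_{-1}$ and $L_0$ are replaced by the translation and grading operators. Your elaboration --- identifying that no finite-dimensionality of graded pieces is ever used, locating where each of the two hypotheses on $Z$ enters, and noting that a non-semisimple action of $[\omega]$ forces the ``generalized'' qualifier --- is a faithful and more explicit account of exactly the argument the paper intends.
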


 This leads us to the following result if we restrict to the irreducible setting, establishing a correspondence between certain irreducible modules of $V$ and $\Zhu_{n}(V)$.
The initial version of this theorem was proved by Zhu \cite{Zhu} when $n=0$ and $V$ is a vertex operator algebra. It was then extended to all higher level Zhu algebras (still for a vertex operator algebra) in \cite{DLM}. Similarly to before, the proof of \cite[Theorem 4.9]{DLM} generalizes to any $\Z_{\geq0}$-graded vertex algebra.

\begin{thm}\label{thm:zhu_correspondence}
Let $V$ be a $\Z_{\geq0}$-graded vertex algebra. Then for $n \in \mathbb{Z}_{\geq 0}$, there is a bijection between isomorphism classes of irreducible $\Zhu_{n}(V)$-modules that cannot factor through $\Zhu_{n-1}(V)$ and irreducible $\mathbb{Z}_{\geq 0}$-gradable $V$-modules.   
\end{thm}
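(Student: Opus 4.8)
The plan is to prove the bijection by showing that the functors $\Omega_n/\Omega_{n-1}$ and $\mathscr{L}_n$ recalled above restrict to mutually inverse maps on isomorphism classes, following the argument of \cite[Theorem~4.9]{DLM} with $L_{-1}$ and $L_0$ replaced throughout by the translation operator $T$ and the grading operator $\Delta$. Most of the work is already in place: the commutator, iterate and associativity identities for vertex operators are axioms of a $\Z$-graded vertex algebra, and the preceding Proposition records exactly the facts specific to this generality, namely that for a nonzero $\Zhu_n(V)$-module $Z$ not factoring through $\Zhu_{n-1}(V)$ the module $\mathscr{L}_n(Z)$ is $\Z_{\geq 0}$-gradable with $\mathscr{L}_n(Z)(0)\neq 0$, and that if moreover $Z$ has no nonzero submodule factoring through $\Zhu_{n-1}(V)$ then $\Omega_n/\Omega_{n-1}(\mathscr{L}_n(Z))\cong Z$.

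First I would treat the direction from $\Zhu_n(V)$-modules to $V$-modules. Let $Z$ be an irreducible $\Zhu_n(V)$-module not factoring through $\Zhu_{n-1}(V)$; being irreducible it has no nonzero submodule factoring through $\Zhu_{n-1}(V)$, so the Proposition gives $\Omega_n/\Omega_{n-1}(\mathscr{L}_n(Z))\cong Z$. It remains to check that $\mathscr{L}_n(Z)$ is irreducible. I would argue this from the defining description $\mathscr{L}_n(Z)=M_n(Z)/\mathcal{J}$, in which $\mathcal{J}$ is the largest graded $V$-submodule of $M_n(Z)$ whose degree-$n$ component is annihilated by $Z^{*}$, equivalently the largest graded submodule meeting the copy of $Z$ in degree $n$ trivially. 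A nonzero graded submodule $N$ of $\mathscr{L}_n(Z)$ has degree-$n$ part a $\Zhu_n(V)$-submodule of $\mathscr{L}_n(Z)(n)\cong Z$, hence $0$ or all of $Z$; in the first case the preimage of $N$ in $M_n(Z)$ has degree-$n$ part inside $\mathcal{J}$ and so lies in $\mathcal{J}$, forcing $N=0$, while in the second $N$ contains $Z$, which generates $\mathscr{L}_n(Z)$ as a weak $V$-module, so $N=\mathscr{L}_n(Z)$. Thus $[Z]\mapsto[\mathscr{L}_n(Z)]$ is a well-defined map into isomorphism classes of irreducible $\Z_{\geq 0}$-gradable $V$-modules, and it is injective because $Z$ is recovered from $\mathscr{L}_n(Z)$ as $\Omega_n/\Omega_{n-1}(\mathscr{L}_n(Z))$.

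Next I would treat the reverse direction. Given an irreducible $\Z_{\geq 0}$-gradable $V$-module $W$ with $W(0)\neq 0$, set $Z=\Omega_n(W)/\Omega_{n-1}(W)$. I would verify, as in \cite{DLM}, that $Z$ is an irreducible $\Zhu_n(V)$-module not factoring through $\Zhu_{n-1}(V)$, using that $\Omega_{n-1}(W)\subseteq\Omega_n(W)$ is precisely the submodule on which the kernel of $\Zhu_n(V)\to\Zhu_{n-1}(V)$ acts trivially, together with the fact that $W$ is generated by $\Omega_n(W)$ and is irreducible. The $\Zhu_n(V)$-module inclusion $Z\hookrightarrow\Omega_n(W)$ then induces, via the universal property of $\overline{M}_n(Z)$ recalled above, a nonzero $V$-module homomorphism $\mathscr{L}_n(Z)\to W$; since $\mathscr{L}_n(Z)$ is irreducible by the first step and $W$ is irreducible, this homomorphism is an isomorphism. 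Hence $[W]=[\mathscr{L}_n(\Omega_n/\Omega_{n-1}(W))]$, so $[Z]\mapsto[\mathscr{L}_n(Z)]$ is surjective and $[W]\mapsto[\Omega_n/\Omega_{n-1}(W)]$ is its two-sided inverse, giving the asserted bijection.

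The hard part is not conceptual but bookkeeping: one must confirm that every step of \cite{DLM} that invoked the conformal vector or the finiteness of the graded pieces of a vertex operator algebra --- in particular the identification $\overline{M}_n(Z)(n)\cong Z$, the well-definedness of the action of $Z^{*}$ on $M_n(Z)(n)$ obtained by inducting on word length via \cite[Remark~3.3]{DLM}, and the comparison of $\mathcal{J}$ with the radical of the contravariant pairing --- goes through once $L_{-1},L_0$ are replaced by $T,\Delta$ and no finite-dimensionality is used. This is precisely what the preceding Proposition, together with the refinements of \cite{BVY}, already guarantees, so the remaining task is to assemble these ingredients exactly as in the proof of \cite[Theorem~4.9]{DLM}.
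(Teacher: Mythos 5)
Your proposal is correct and takes essentially the same route as the paper, which simply observes that the proof of \cite[Theorem 4.9]{DLM} carries over verbatim once $L_{-1}$ and $L_0$ are replaced by the translation and grading operators; you have in fact supplied more detail than the paper does, correctly isolating the two key inputs (the BVY-corrected statement that $\Omega_n/\Omega_{n-1}(\mathscr{L}_n(Z))\cong Z$ when $Z$ has no nonzero submodule factoring through $\Zhu_{n-1}(V)$, and the universal property of $\overline{M}_n(Z)$ for surjectivity). No gaps.
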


\begin{rem}
A proof of this correspondence (for $n=0$) has also been established in the context of a general vertex algebra in \cite{KDS}.
\end{rem}

\begin{rem} Note that the induction functor $\Phi^L$ from $\Zhu_0(V)$-modules to modules for $\MTA$ of $V$, as well as $\mathfrak{L}^f(V)$-modules, is the same as the  functor $M_0$ in \cite{DLM}. 
\end{rem}

\begin{rem} 
Several papers contain misstatements about higher-level Zhu algebras.
Although \cite[Theorem 1.10]{He} is correct, the preceding statement -- that the induction functor $\mathscr{L}_n$ from the category of $\hZhu{n}(V)$-modules to the category of admissible $V$-modules is an inverse to the restriction functor $\Omega_n/\Omega_{n-1}$ going the other way -- is only valid for \emph{completely reducible} $\mathbb{Z}_{\geq 0}$-gradable $V$-modules. 
It is not true in general that $\Omega_n/\Omega_{n-1}(\mathscr{L}_n(Z)) \cong Z$, even if $Z$ does not factor through $\hZhu{n-1}(V)$.  
The correct statement, given in \cite{BVY}, requires the non-existence of a nontrivial submodule of $Z$ which factors through $\hZhu{n-1}(V)$.
Examples to illustrate the need for this extra condition are given in \cite{BVY,BVY-Virasoro}. 
\end{rem}

\subsection{Li's \texorpdfstring{$\mathbf{\Delta}(x)$}{Δ} operator and spectral flow}\label{subsec:Delta_op}

In this subsection, we recall Li's operator $\mathbf{\Delta}(x) \in \End(V)[[x,x^{-1}]]$ introduced in \cite{Li97} for a vertex operator algebra $V$ and extend it to the setting of vertex algebras. This is the operator that formally gives both simple current extensions of $V$-modules and the notion of spectrally flowed modules.

Note that we are using a bold type $\mathbf{\Delta}$ for Li's operator here since in this paper $\Delta$ denotes the weight operator. 
A lot of properties of this operator proven in \cite{Li97} in the context of vertex operator algebra, naturally extend to the general setting of vertex algebra. For instance, we have the following results, generalizing \cite[Propositions 2.1-2.2]{Li97}.

\begin{prop}\label{Setup for Delta}
Let $V = (V, Y(\cdot, x))$ be a vertex algebra and $T \in \End(V)$ the translation operator. 
\begin{enumerate}[leftmargin=*]
\item  Let $\mathbf{\Delta}(x)\in \End(V)[[x,x^{-1}]]$ be any operator such that 
\begin{equation}\label{Delta1}
\mathbf{\Delta}(x)v\in V[x,x^{-1}],\quad\text{ for all }v\in V.
\end{equation}   
Suppose that $(\tilde{V},\tilde{Y}(\cdot, x)):=(V, Y(\mathbf{\Delta}(x)\cdot, x))$ is a weak $V$-module. Then the following properties hold:
\begin{eqnarray}
\mathbf{\Delta}(x){\bf 1}&=&{\bf 1};\label{Delta2}\\
{[T,\mathbf{\Delta}(x)]}&=&-\frac{d}{dx}\mathbf{\Delta}(x);\label{Delta3}\\
Y(\mathbf{\Delta}(x_2+x_0)v,x_0)\mathbf{\Delta}(x_2)&=&\mathbf{\Delta}(x_2)Y(v,x_0)\text{ for all }v\in V,\label{Delta4}
\end{eqnarray}
where $x_0$ and $x_2$ are formal commuting variables.
Conversely, if the above three properties hold, for any weak $V$-module $(M,Y_M(\cdot,x))$, then $(\tilde{M}, Y_{\tilde{M}}(\cdot,x)):=(M, Y_M(\mathbf{\Delta}(x)\cdot,x))$ is a weak $V$-module.
\bigskip

\item If $V$ is simple and  $\mathbf{\Delta}(x)\in\End(V)[[x,x^{-1}]]$ satisfies properties \eqref{Delta1}-\eqref{Delta4}, then the following holds: 
\[ \mathbf{\Delta}(x)v=0 \mbox{ for $v \in V$ if and only if $v=0$.}\]
\end{enumerate}
\end{prop}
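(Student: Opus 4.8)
The plan is to prove the contrapositive combined with a simplicity argument: if $\mathbf{\Delta}(x)v = 0$ for some nonzero $v \in V$, I will produce a nonzero proper submodule (or left ideal) structure that contradicts simplicity. More precisely, consider the set
\[
N = \{ v \in V \mid \mathbf{\Delta}(x) v = 0 \}.
\]
This is clearly a linear subspace of $V$, and by \eqref{Delta2} it does not contain ${\bf 1}$, so $N \neq V$. The goal is to show $N$ is a $V$-submodule of $V$ (viewing $V$ as a module over itself in the usual way), hence $N = 0$ by simplicity.

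The key computation is to show $N$ is stable under all the operators $u_n$ for $u \in V$, $n \in \mathbb{Z}$. For this I would use the intertwining-type identity \eqref{Delta4}, namely $Y(\mathbf{\Delta}(x_2+x_0)u, x_0)\mathbf{\Delta}(x_2) = \mathbf{\Delta}(x_2) Y(u, x_0)$. Applying both sides to $v \in N$ and using $\mathbf{\Delta}(x_2) v = 0$, the right-hand side vanishes, so
\[
Y(\mathbf{\Delta}(x_2+x_0)u, x_0)\,\mathbf{\Delta}(x_2) v = 0.
\]
But this is the statement that $\mathbf{\Delta}(x_2) v = 0$ already, which is circular; the correct move is instead to apply \eqref{Delta4} and feed in a vector $w$ with $\mathbf{\Delta}(x_2)w$ nonzero, then isolate $\mathbf{\Delta}(x_0 + x_2)(u_n v)$-type coefficients. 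Concretely, I would rearrange \eqref{Delta4} to express $\mathbf{\Delta}(x_2) Y(u,x_0) v$ and then, expanding $Y(u,x_0) v = \sum_n (u_n v) x_0^{-n-1}$, I would argue that if $\mathbf{\Delta}(x_2)v = 0$ then each $\mathbf{\Delta}$-image of $u_n v$ is forced to vanish because the left-hand side $Y(\mathbf{\Delta}(x_2+x_0)u, x_0)\mathbf{\Delta}(x_2)v$ is identically zero. This shows $u_n N \subseteq N$ for all $u, n$, so $N$ is a $V$-submodule. Since $V$ is simple and $N \neq V$, we get $N = 0$, which is exactly the claim.

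An alternative, perhaps cleaner, route: since $\mathbf{\Delta}(x)$ is invertible in an appropriate sense on the twisted module $\tilde V = (V, Y(\mathbf{\Delta}(x)\cdot, x))$ being a weak $V$-module, and $\tilde V$ contains the cyclic vector ${\bf 1}$ which generates it, one can also argue via the fact that the twisted structure map is an isomorphism of vector spaces when restricted appropriately. But the submodule argument above is the most elementary and is the one I would write up. I would also need the preliminary observation that $N$ is a $T$-stable subspace (using \eqref{Delta3}: if $\mathbf{\Delta}(x)v = 0$ then $\mathbf{\Delta}(x)(Tv) = T(\mathbf{\Delta}(x)v) + \frac{d}{dx}\mathbf{\Delta}(x) v = 0$), which is automatic once we know $N$ is a submodule, but is a useful sanity check.

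The main obstacle I anticipate is making the coefficient-extraction in \eqref{Delta4} rigorous: one must be careful that $\mathbf{\Delta}(x_2 + x_0)u$ expands as a well-defined element of $V[x_2, x_2^{-1}][[x_0]]$ (using \eqref{Delta1}) so that the left-hand side $Y(\mathbf{\Delta}(x_2+x_0)u, x_0)\mathbf{\Delta}(x_2)v$ makes sense and equating it to zero legitimately forces the vanishing of the relevant coefficients. Once the formal-variable bookkeeping is handled, the argument that $N$ is a submodule is straightforward, and simplicity finishes it. I expect the write-up to be short, with the bulk of the care going into the expansion conventions for $\mathbf{\Delta}(x_2+x_0)$.
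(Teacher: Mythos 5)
Your proposal addresses only part (2) of the proposition. Part (1) --- that the weak-module structure on $(\tilde V,\tilde Y)$ forces \eqref{Delta2}--\eqref{Delta4}, and conversely that these three properties let one twist any weak $V$-module $(M,Y_M)$ into a weak $V$-module $(M, Y_M(\mathbf{\Delta}(x)\cdot,x))$ --- is not touched at all. That part is the bulk of the statement: one must deduce \eqref{Delta2} from the vacuum axiom, \eqref{Delta3} from translation covariance, and \eqref{Delta4} from weak associativity, and in the converse direction verify the Jacobi identity for $Y_M(\mathbf{\Delta}(x)\cdot,x)$ using \eqref{Delta4}. The paper disposes of all of this by observing that the proofs of Li's Propositions 2.1 and 2.2 carry over verbatim from vertex operator algebras to vertex algebras (with $L_{-1}$ replaced by the translation operator $T$); as a standalone proof of the proposition as stated, your proposal is therefore incomplete.

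For part (2) your strategy is the right one and is exactly Li's argument: the kernel $N=\{v\in V \mid \mathbf{\Delta}(x)v=0\}$ is a proper subspace by \eqref{Delta2} and a submodule of the adjoint module by \eqref{Delta4}, hence zero by simplicity. But your write-up of the key step is muddled. The direct application of \eqref{Delta4} to $v\in N$ is not circular: the \emph{left}-hand side $Y(\mathbf{\Delta}(x_2+x_0)u,x_0)\mathbf{\Delta}(x_2)v$ vanishes because the operator is applied to $\mathbf{\Delta}(x_2)v=0$, and therefore the \emph{right}-hand side
\[
\mathbf{\Delta}(x_2)Y(u,x_0)v=\sum_{n\in\Z} \mathbf{\Delta}(x_2)(u_nv)\,x_0^{-n-1}
\]
vanishes identically; extracting the coefficient of each power of $x_0$ gives $u_nv\in N$ for all $u$ and $n$. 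No auxiliary vector $w$ and no rearrangement are needed --- you had it right the first time but attributed the vanishing to the wrong side. You should also record explicitly that $N$ is $T$-stable (your \eqref{Delta3} computation does this), so that ``submodule of the adjoint module'' matches the notion of ideal used in defining simplicity. With that cleaned up and part (1) supplied, the argument goes through.
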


\begin{defn}
    Let $V$ be a vertex algebra and let $\mcU$ be its universal enveloping algebra defined in Section \ref{MTA-Zhu-section} above. We define $G(V)$ to be the set consisting of each $\mathbf{\Delta}(x)=\sum_{r\in \mathbb{C}}\mathbf{\Delta}_rx^r\in \mcU\{x\}$ satisfying the following property: for any weak $V$-module $W$ and any $w\in W$, there exists (finitely many) $n_1,...,n_k\in\mathbb{C}$ such that 
    $$\mathbf{\Delta}(x)w\in x^{n_1}W[x]+...+x^{n_k}W[x],$$
    and the properties  \eqref{Delta1}-\eqref{Delta4} of Proposition \ref{Setup for Delta} hold.
\end{defn}

\begin{prop}\label{hom Delta}
Let $V$ be a vertex algebra and $\mathbf{\Delta}(x) \in G(V)$.
\begin{enumerate}[leftmargin=*]
\item If $M,W$ are (weak) $V$-modules and  $\psi: M \rightarrow W$ is a $V$-module homomorphism, then $\psi: \tilde{M} \rightarrow \tilde{W}$ is also a $V$-module homomorphism.
\item Let $\mathbf{\Delta}(x)\in G(V)$ such that $\mathbf{\Delta}(x)$ has an inverse $\mathbf{\Delta}^{-1}(x)\in \mcU\{x\}$. Then $\mathbf{\Delta}^{-1}(x)\in G(V)$. 
\end{enumerate}
\end{prop}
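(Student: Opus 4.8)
The plan is to prove both parts by reducing them to the characterization of $G(V)$ given in Proposition~\ref{Setup for Delta} together with the conversely-statement there. For part (1), let $\psi: M \to W$ be a $V$-module homomorphism, so $\psi Y_M(v,x) = Y_W(v,x)\psi$ for all $v \in V$. I would compute directly: for $v \in V$, since $\mathbf{\Delta}(x)v \in \mcU\{x\}$ acts on $M$ and $W$ through their module structures and $\psi$ intertwines those module structures, we get $\psi\bigl(Y_M(\mathbf{\Delta}(x)v, x)m\bigr) = Y_W(\mathbf{\Delta}(x)v, x)\psi(m)$ — here one must be slightly careful that $\mathbf{\Delta}(x)$ is an element of $\mcU\{x\}$ so its action is built from the same modes that $\psi$ intertwines, hence $\psi \circ \mathbf{\Delta}(x)_M = \mathbf{\Delta}(x)_W \circ \psi$ as maps $M[[x,x^{-1}]] \to W[[x,x^{-1}]]$. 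Then by definition $\tilde{Y}_M(v,x) = Y_M(\mathbf{\Delta}(x)v, x)$ and $\tilde{Y}_W(v,x) = Y_W(\mathbf{\Delta}(x)v,x)$, so $\psi \tilde{Y}_M(v,x) = \tilde{Y}_W(v,x)\psi$, i.e.\ $\psi: \tilde{M} \to \tilde{W}$ is a $V$-module homomorphism.

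For part (2), I would verify that $\mathbf{\Delta}^{-1}(x)$ satisfies the defining conditions of $G(V)$. The condition \eqref{Delta2} is immediate: applying $\mathbf{\Delta}^{-1}(x)$ to \eqref{Delta2} for $\mathbf{\Delta}$ gives $\mathbf{\Delta}^{-1}(x){\bf 1} = {\bf 1}$. For \eqref{Delta3}, differentiate the identity $\mathbf{\Delta}(x)\mathbf{\Delta}^{-1}(x) = \id$ and use the Leibniz rule together with \eqref{Delta3} for $\mathbf{\Delta}$ to solve for $[T, \mathbf{\Delta}^{-1}(x)]$: from $\tfrac{d}{dx}\bigl(\mathbf{\Delta}(x)\mathbf{\Delta}^{-1}(x)\bigr) = 0$ and $[T,\mathbf{\Delta}(x)] = -\tfrac{d}{dx}\mathbf{\Delta}(x)$ one obtains $[T,\mathbf{\Delta}^{-1}(x)] = -\tfrac{d}{dx}\mathbf{\Delta}^{-1}(x)$ after multiplying on both sides by $\mathbf{\Delta}^{-1}(x)$. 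For \eqref{Delta4}, the cleanest route is conceptual rather than computational: by the converse direction of Proposition~\ref{Setup for Delta}(1), since $\mathbf{\Delta}(x) \in G(V)$, the pair $(\tilde V, \tilde Y) = (V, Y(\mathbf{\Delta}(x)\cdot, x))$ is a weak $V$-module; the effect of then twisting \emph{this} module by $\mathbf{\Delta}^{-1}(x)$ recovers $(V, Y)$, so the twisted structure $Y(\mathbf{\Delta}(x)\mathbf{\Delta}^{-1}(x)\cdot,x) = Y(\cdot,x)$ is again a weak module — but to invoke the forward direction of Proposition~\ref{Setup for Delta}(1) and conclude \eqref{Delta4} for $\mathbf{\Delta}^{-1}(x)$, I would instead argue directly: substitute $v \mapsto \mathbf{\Delta}^{-1}(x_2+x_0)v$ into \eqref{Delta4} for $\mathbf{\Delta}$, yielding $Y(v, x_0)\mathbf{\Delta}(x_2) = \mathbf{\Delta}(x_2)Y(\mathbf{\Delta}^{-1}(x_2+x_0)v, x_0)$, then multiply on the left by $\mathbf{\Delta}^{-1}(x_2)$ and on the right by $\mathbf{\Delta}^{-1}(x_2)$ appropriately to isolate $Y(\mathbf{\Delta}^{-1}(x_2+x_0)v,x_0)\mathbf{\Delta}^{-1}(x_2) = \mathbf{\Delta}^{-1}(x_2)Y(v,x_0)$.

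Finally I would check the remaining part of the $G(V)$ definition — that for any weak $V$-module $W$ and any $w \in W$, $\mathbf{\Delta}^{-1}(x)w$ lies in a finite sum of spaces of the form $x^{n_i}W[x]$. This is the step I expect to require the most care, since $\mathbf{\Delta}^{-1}(x) \in \mcU\{x\}$ is a priori an infinite series in possibly infinitely many powers of $x$, and its action on a fixed vector need not obviously truncate. The natural approach is to observe that on a $\mathbb{Z}_{\geq 0}$-gradable (generalized) module the operator $\mathbf{\Delta}(x)$ is block-triangular with respect to the degree (or $L_0$-eigenvalue) filtration with invertible "diagonal" blocks, so its inverse has the same triangular shape and hence, restricted to the finitely many graded pieces in which $w$ and its images live, acts by a Laurent polynomial in $x$ — this is exactly the structure exhibited by the spectral flow operators. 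I would phrase this carefully using the grading hypotheses already in force, as this finiteness is the genuine content beyond formal manipulation; the identities \eqref{Delta2}--\eqref{Delta4} for $\mathbf{\Delta}^{-1}$ are then routine consequences of those for $\mathbf{\Delta}$ as sketched above.
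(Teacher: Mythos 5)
Your overall architecture matches the paper's: part (1) is the straightforward intertwining argument (Li's Proposition 2.5), and part (2) reduces to checking \eqref{Delta1}--\eqref{Delta4} for $\mathbf{\Delta}^{-1}(x)$, with \eqref{Delta2} and \eqref{Delta3} following formally from the corresponding identities for $\mathbf{\Delta}(x)$ exactly as you say. (In part (1), note that $\mathbf{\Delta}(x)$ is applied to $v$ \emph{inside $V$} before feeding the result into $Y_M$ or $Y_W$; you do not need, and should not invoke, an intertwining of actions of $\mathbf{\Delta}(x)$ on $M$ and $W$ themselves --- the claim follows from linearity of $\psi$ and the ordinary intertwining property applied coefficientwise to $\mathbf{\Delta}(x)v\in V\{x\}$.)

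The genuine gap is in your verification of \eqref{Delta4} for $\mathbf{\Delta}^{-1}(x)$. You propose to ``substitute $v \mapsto \mathbf{\Delta}^{-1}(x_2+x_0)v$'' into \eqref{Delta4} for $\mathbf{\Delta}$ and then cancel $\mathbf{\Delta}(x_2+x_0)\mathbf{\Delta}^{-1}(x_2+x_0)$. This is precisely the step that is not automatic in formal calculus: \eqref{Delta4} is an identity for $v\in V$, and extending it to the formal series $\mathbf{\Delta}^{-1}(x_2+x_0)v$ requires that the substitution $x\mapsto x_2+x_0$ (expanded in nonnegative powers of $x_0$) commute with the product of the two doubly-infinite series $\mathbf{\Delta}(x)$ and $\mathbf{\Delta}^{-1}(x)$; coefficient extraction in such a product involves infinite sums whose regrouping after substitution is not justified a priori. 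The paper's proof exists exactly to repair this: one inserts $\mathbf{\Delta}(x_1)\mathbf{\Delta}^{-1}(x_1)=1$ with an \emph{independent} third variable $x_1$ (where the cancellation is a genuine identity), and only then trades $x_1$ for $x_2+x_0$ under the formal $\delta$-function, using $x_2^{-1}\delta\bigl(\tfrac{x_1-x_0}{x_2}\bigr)=x_1^{-1}\delta\bigl(\tfrac{x_2+x_0}{x_1}\bigr)$ and the substitution property of $\delta$; taking $\res_{x_1}$ at the end yields \eqref{Delta4} for $\mathbf{\Delta}^{-1}(x)$. Without some such device your argument does not close. Separately, your proposed proof of the truncation condition for $\mathbf{\Delta}^{-1}(x)$ via block-triangularity with respect to a degree or $L_0$-grading cannot work as stated, since the condition must hold for arbitrary \emph{weak} $V$-modules, which carry no grading; this part needs a different justification (or an appeal to the structure of $\mcU\{x\}$ and the hypothesis that the inverse lies there).
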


\begin{proof}
The proof of statement (1), respectively (2), is exactly the same as \cite[Proposition 2.5]{Li97}, respectively \cite[Proposition 2.7]{Li97}, extended in both cases from the setting of vertex operator algebras to that of vertex algebras.  Since property \eqref{Delta4} for $\mathbf{\Delta}^{-1}(x)$ requires a few extra steps involving $\delta$-function identities other than what is given in \cite{Li97}, we detailed this part of the proof here. 

Let $\delta(x) = \sum_{n \in \mathbb{Z}}x^n$.  For formal commuting variables $x_0,x_1, x_2$, we have 
\begin{eqnarray*}
\lefteqn{x_2^{-1} \delta\left(\frac{x_1 - x_0}{x_2}\right) \mathbf{\Delta}^{-1}(x_2) Y(v, x_0) }\\
&=& x_2^{-1} \delta \left( \frac{x_1 - x_0}{x_2}\right) \mathbf{\Delta}^{-1} (x_2) Y(\mathbf{\Delta}(x_1)\mathbf{\Delta}^{-1}(x_1) v, x_0) \mathbf{\Delta}(x_2) \mathbf{\Delta}^{-1} (x_2)\\
&=& x_1^{-1} \delta \left( \frac{x_2 + x_0}{x_1} \right)  \mathbf{\Delta}^{-1} (x_2) Y(\mathbf{\Delta}(x_1)\mathbf{\Delta}^{-1}(x_1) v, x_0) \mathbf{\Delta}(x_2) \mathbf{\Delta}^{-1} (x_2)\\
&=& x_1^{-1} \delta \left( \frac{x_2 + x_0}{x_1} \right)  \mathbf{\Delta}^{-1} (x_2) Y(\mathbf{\Delta}(x_2 + x_0)\mathbf{\Delta}^{-1}(x_2 + x_0) v, x_0) \mathbf{\Delta}(x_2) \mathbf{\Delta}^{-1} (x_2)\\
&=& x_1^{-1} \delta \left( \frac{x_2 + x_0}{x_1} \right)  \mathbf{\Delta}^{-1} (x_2) \mathbf{\Delta}(x_2) Y(\mathbf{\Delta}^{-1}(x_2 + x_0) v, x_0)  \mathbf{\Delta}^{-1} (x_2) \\
&=& x_1^{-1} \delta \left( \frac{x_2 + x_0}{x_1} \right)   Y(\mathbf{\Delta}^{-1}(x_2 + x_0) v, x_0)  \mathbf{\Delta}^{-1} (x_2)\\
&=& x_2^{-1} \delta\left(\frac{x_1 - x_0}{x_2}\right)  Y(\mathbf{\Delta}^{-1}(x_2 + x_0) v, x_0)  \mathbf{\Delta}^{-1} (x_2),
\end{eqnarray*}
giving the result, where we have used 
the $\delta$-function properties \cite[(2.3.17) and (2.3.21)]{LL} and the property \eqref{Delta4} for $\mathbf{\Delta}(x)$. 
\end{proof}

We let $G^0(V)$ be the subset of all invertible elements of $G(V)$. By Proposition \ref{hom Delta}(2), $G^0(V)$ is a group. Extending \cite{Li97} to the setting of vertex algebras, we have the following.

\begin{thm}\label{first-Delta-thm} 
Let $V$ be a vertex algebra and let $\mathbf{\Delta}(x)\in G^0(V)$. 
Let $W$ be a weak $V$-module and recall that $(\tilde{W},Y_{\tilde{W}}(\cdot,x))=(W,Y_W(\mathbf{\Delta}(x)\cdot,x))$ is also a weak $V$-module. Then
\begin{enumerate}[leftmargin=*]
\item $(W,Y_W)$ is an irreducible $V$-module if and only if $(\tilde{W},Y_{\tilde{W}})$ is. 
\item $(W,Y_W)$ is a maximal $V$-module if and only if $(\tilde{W}, Y_{\tilde{W}})$ is. 
\end{enumerate}
\end{thm}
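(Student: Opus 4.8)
The plan is to leverage the fact that $\mathbf{\Delta}(x) \in G^0(V)$ is invertible, so that the operation $W \mapsto \tilde{W}$ is an \emph{equivalence} on the category of weak $V$-modules, with inverse given by $\mathbf{\Delta}^{-1}(x)$ via Proposition \ref{hom Delta}(2). Concretely, if $(\tilde{W}, Y_{\tilde{W}}) = (W, Y_W(\mathbf{\Delta}(x)\cdot, x))$, then applying the construction again with $\mathbf{\Delta}^{-1}(x)$ recovers $(W, Y_W)$ up to the identity, since $Y_{\tilde{W}}(\mathbf{\Delta}^{-1}(x)\cdot, x) = Y_W(\mathbf{\Delta}(x)\mathbf{\Delta}^{-1}(x)\cdot, x) = Y_W(\cdot, x)$. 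The core observation, to be recorded first, is that the underlying vector space of $W$ and $\tilde{W}$ is literally the same, and moreover the two module structures have \emph{the same submodules}: if $N \subseteq W$ is a $V$-submodule with respect to $Y_W$, then because $\mathbf{\Delta}(x) N \subseteq N[x,x^{-1}]$ (this uses $\mathbf{\Delta}(x) \in \mcU\{x\}$ acting through the module structure, hence preserving submodules), $N$ is also closed under $Y_{\tilde{W}}(\cdot, x) = Y_W(\mathbf{\Delta}(x)\cdot, x)$, so $N$ is a $\tilde{W}$-submodule. Conversely, a $\tilde{W}$-submodule is a $W$-submodule by the same argument applied to $\mathbf{\Delta}^{-1}(x)$.

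Once the submodule lattices of $W$ and $\tilde{W}$ are identified, both statements are immediate. For (1): $W$ is irreducible iff its only submodules are $0$ and $W$ (and $W \neq 0$), and this property of the submodule lattice is preserved; note $\tilde{W} \neq 0$ since the underlying space is unchanged and $W \neq 0$ (or invoke Proposition \ref{Setup for Delta}(2) / Proposition \ref{hom Delta} in the simple case, though for general weak modules the vector-space identification already suffices). For (2): here I would first pin down what ``maximal $V$-module'' means in this context — presumably $W$ is maximal (among, say, $\mathbb{Z}_{\geq 0}$-gradable or weak modules generated by a fixed lowest piece, in the sense of the $\overline{M}_n$ / $\mathscr{L}_n$ constructions) iff it has no proper quotient that is again such a module, equivalently $W$ has no nonzero submodule meeting the generating subspace trivially. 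Since $\mathbf{\Delta}(x)$ fixes $\mathbf{1}$ by \eqref{Delta2} and, in the graded setting, is compatible with the grading operator up to the explicit conjugation Li records, the generating subspace ($\Omega_n$-type data, or $W(0)$) is sent to itself, so maximality transfers verbatim through the submodule-lattice identification.

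The main obstacle I anticipate is bookkeeping around the grading, not the lattice argument itself: Li's $\mathbf{\Delta}(x)$ typically shifts conformal weights by a ``charge'' (this is exactly the spectral-flow phenomenon), so $\tilde{W}$ need not be $\mathbb{Z}_{\geq 0}$-gradable even when $W$ is, and one must be careful that ``maximal'' is interpreted in a grading-robust way — e.g.\ as maximality among weak modules with a fixed restricted/lowest-weight space, or equivalently phrased purely in terms of not admitting a nonzero submodule disjoint from that space. With that interpretation fixed, the submodule correspondence does all the work and no genuine computation is needed beyond checking that $\mathbf{\Delta}(x)$ and $\mathbf{\Delta}^{-1}(x)$ each preserve submodules, which follows because they act through the universal enveloping algebra $\mcU$ and hence through the module action. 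I would therefore structure the proof as: (i) recall $\mathbf{\Delta}^{-1}(x) \in G^0(V)$ and that $(W, Y_W) \mapsto (\tilde W, Y_{\tilde W})$ is invertible; (ii) show $N \mapsto N$ is a bijection between $Y_W$-submodules and $Y_{\tilde W}$-submodules; (iii) deduce (1) since irreducibility is a property of the submodule lattice plus nonvanishing; (iv) deduce (2) since maximality, suitably phrased, is likewise a property of the submodule lattice together with the $\mathbf{\Delta}(x)$-invariant generating subspace.
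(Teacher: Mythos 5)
Your proposal is correct and follows essentially the same route as the paper: invertibility of $\mathbf{\Delta}(x)$ (via Proposition \ref{hom Delta}(2)) gives a bijection between the submodule lattices of $(W,Y_W)$ and $(\tilde{W},Y_{\tilde{W}})$ on the common underlying space, from which (1) and (2) follow immediately. One small phrasing point: the reason a $Y_W$-submodule $N$ is closed under $Y_{\tilde{W}}(v,x)=Y_W(\mathbf{\Delta}(x)v,x)$ is that $\mathbf{\Delta}(x)$ is applied to $v\in V$ (landing in $V[x,x^{-1}]$ by \eqref{Delta1}), not that $\mathbf{\Delta}(x)N\subseteq N[x,x^{-1}]$, but this does not affect the argument.
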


\begin{proof}  By definition, since $\mathbf{\Delta}(x)\in G^0(V)$,   $\mathbf{\Delta}(x)$ is invertible, with inverse denoted $\mathbf{\Delta}^{-1}(x)$ in $G^0(V)$. Thus on the one hand, if  $(M, Y_M)$ is a submodule of $(W, Y_W)$ then applying $\mathbf{\Delta}(x)$, we have that  $(\tilde{M}, Y_{\tilde{M}})$ is a submodule of $(\tilde{W}, Y_{\tilde{W}})$ so that if $(\tilde{W}, Y_{\tilde{W}})$ is irreducible then $(W, Y_W)$ is.  On the other hand if $(\tilde{M}, Y_{\tilde{M}}(\cdot, x))$ is a submodule of $(\tilde{W}, Y_{\tilde{W}}(\cdot, x)) = (W, Y_W(\mathbf{\Delta}(x) \cdot, x))$ and  $(\tilde{M}, Y_{\tilde{M}}(\cdot, x)) = (M, Y_M(\mathbf{\Delta}(x) \cdot, x))$.  Thus applying $\mathbf{\Delta}^{-1}(x)$, we have that $(M, Y_M(\mathbf{\Delta}^{-1} (x) \mathbf{\Delta}(x)\cdot, x)) = (M, Y_M(\cdot, x))$ is a submodule of $(W, Y_W(\mathbf{\Delta}^{-1}(x) \mathbf{\Delta}(x) \cdot, x)) = (W, Y_W(\cdot, x))$, so that if $(W, Y_W)$ is irreducible then $(\tilde{W}, Y_{\tilde{W}})$ is. This proves part (1).  Part (2) is proved 
similarly. 
\end{proof}

Next we introduce a specific operator $\mathbf{\Delta}(x)\in G^0(x)$ that realizes spectral flow.  This operator was first introduced in \cite{Li97} in the setting of vertex operator algebras.  We show here that many of the results of \cite{Li97} (with minor typos taken into account) still hold for vertex algebras.  

\begin{prop}\label{define-Delta(h,x)} Let $V$ be a vertex algebra  and let $h\in V_1$ satisfying the conditions that $h_nh=\delta_{n,1}\gamma{\bf 1}$, for $n \in \mathbb{Z}_{\geq1}$  where $\gamma \in \mathbb{C}$ is fixed,  and $h_0$ acts semisimply on $V$. We define \begin{equation}\label{define-special-Delta} 
\mathbf{\Delta}(h,x)=x^{h_0}\exp\left(\sum_{k=1}^{\infty}\frac{h_k}{-k}(-x)^{-k}\right).
\end{equation}
Then $\mathbf{\Delta}(h,x)\in G^0(V)$. Moreover, the inverse of $\mathbf{\Delta}(h,x)$ is $\mathbf{\Delta}(-h,x)$.
\end{prop}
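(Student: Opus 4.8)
The plan is to verify that $\mathbf{\Delta}(h,x)$ lies in $G(V)$ by checking the three defining properties \eqref{Delta2}--\eqref{Delta4} (together with the polynomiality condition \eqref{Delta1}), and then to exhibit $\mathbf{\Delta}(-h,x)$ as a two-sided inverse, which by Proposition \ref{hom Delta}(2) places $\mathbf{\Delta}(h,x)$ in $G^0(V)$. First I would record the formal consequences of the hypotheses on $h$: since $h_nh=\delta_{n,1}\gamma\mathbf{1}$ for $n\geq 1$ and $h_0$ acts semisimply, the modes $h_k$ for $k\geq 1$ commute among themselves (their brackets $[h_k,h_j]=\sum_{i\geq 0}\binom{k}{i}(h_ih)_{k+j-i}$ vanish for $k,j\geq 1$ because $h_ih=0$ for $i\geq 2$ and the surviving $i=1$ term carries a central factor $\gamma\mathbf 1$ whose positive modes act as zero), so the exponential $\exp(\sum_{k\geq 1}\frac{h_k}{-k}(-x)^{-k})$ is an unambiguous well-defined element of $(\End V)[[x^{-1}]]$, and on any fixed $v\in V$ only finitely many terms act nontrivially by the truncation condition on modes. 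Combined with the semisimple, hence integral-eigenvalue (after the usual normalization), action of $h_0$ giving the $x^{h_0}$ factor, this yields \eqref{Delta1} and the membership $\mathbf{\Delta}(h,x)w\in x^{n_1}W[x]+\cdots+x^{n_k}W[x]$ required in the definition of $G(V)$ for an arbitrary weak module $W$.

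Next I would check \eqref{Delta2}: $h_k\mathbf 1=0$ for $k\geq 1$ (since $\mathbf 1$ is the vacuum and $Y(\mathbf 1,x)=\mathrm{id}$ forces $u_k\mathbf 1=0$ wait --- rather one uses $h_k\mathbf 1 = $ the coefficient, which vanishes for $k\geq 0$; more simply $Y(h,x)\mathbf 1\in V[[x]]$ so $h_k\mathbf 1=0$ for $k\geq 0$) and $h_0\mathbf 1=0$, hence $x^{h_0}\mathbf 1=\mathbf 1$ and the exponential acts as the identity on $\mathbf 1$, giving $\mathbf{\Delta}(h,x)\mathbf 1=\mathbf 1$. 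For \eqref{Delta3} I would use $[T,h_k]=-k\,h_{k-1}$ (the standard commutator of the translation operator with modes, coming from $[T,u_k]=-k\,u_{k-1}$, equivalently $(Tu)_k=-k\,u_{k-1}$), compute $[T,x^{h_0}]$ and $[T,\exp(\sum_{k\geq 1}\frac{h_k}{-k}(-x)^{-k})]$ separately, and observe that the resulting expression matches $-\frac{d}{dx}\mathbf{\Delta}(h,x)$ term by term --- this is the computation carried out in \cite[Prop.~2.8]{Li97} in the vertex operator algebra case, and since it only involves the bracket relations just cited (which hold in any $\mathbb Z$-graded, indeed any, vertex algebra) it transfers verbatim.

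The main obstacle, and the step deserving the most care, is the conjugation formula \eqref{Delta4}, $Y(\mathbf{\Delta}(h,x_2+x_0)v,x_0)\mathbf{\Delta}(h,x_2)=\mathbf{\Delta}(h,x_2)Y(v,x_0)$. The strategy is the one from \cite{Li97}: one shows separately that the two factors $x^{h_0}$ and $E(x):=\exp(\sum_{k\geq 1}\frac{h_k}{-k}(-x)^{-k})$ each satisfy an analogous conjugation identity. For the $x^{h_0}$ part one uses that $h_0$ acts as a derivation of all the products $Y(\cdot,x)$ (from $[h_0,u_m]=\sum_{i\geq 0}\binom{0}{i}(h_iu)_{m-i}=(h_0u)_m$) together with $[h_0,v_m x_0^{-m-1}]$-type bookkeeping to get $x_2^{h_0}Y(v,x_0)x_2^{-h_0}=Y(x_2^{h_0}v,x_0)$ after the shift $x_0\mapsto x_0$ --- actually the clean statement is $z^{h_0}Y(v,x_0)z^{-h_0}=Y(z^{h_0}v,x_0)$. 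For the $E(x)$ part the key input is the commutator formula for $h_k$ ($k\geq1$) against a general mode $v_m$, namely $[h_k,v_m]=\sum_{i\geq 0}\binom{k}{i}(h_iv)_{k+m-i}=(h_0v)_{k+m}+\text{(a $\gamma$-central term for }i=1)$; because $h_1v$ contributes only when paired back against $h$ and the higher $h_i v$ vanish, the generating-function manipulation of $\sum_k$ collapses exactly as in \cite{Li97} to give $E(x_2)Y(v,x_0)E(x_2)^{-1}=Y(E(x_2+x_0)v,x_0)$ --- here one must be attentive to the binomial-expansion convention $(x_2+x_0)^{\pm k}$ and to the appearance of the constant $\gamma$, which in the VOA case is the central charge of the Heisenberg subalgebra and plays no essential role beyond being a fixed scalar. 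Multiplying the two conjugation identities (noting $x^{h_0}$ and $E(x)$ do not commute but their product is handled by first conjugating by $E$, then by $x^{h_0}$, in the correct order) yields \eqref{Delta4}. Finally, for the inverse: replacing $h$ by $-h$ flips the sign of every $h_k$ and of $h_0$, so $\mathbf{\Delta}(-h,x)=x^{-h_0}\exp(\sum_{k\geq1}\frac{-h_k}{-k}(-x)^{-k})=x^{-h_0}\exp(\sum_{k\geq1}\frac{h_k}{k}(-x)^{-k})$; since the $h_k$ ($k\geq1$) mutually commute, $\exp(\sum\frac{h_k}{-k}(-x)^{-k})\exp(\sum\frac{h_k}{k}(-x)^{-k})=\mathrm{id}$, and $x^{h_0}x^{-h_0}=\mathrm{id}$, and one checks the two exponential factors can be slid past the $x^{\pm h_0}$ factors at the cost of a substitution that the commuting structure renders harmless --- giving $\mathbf{\Delta}(h,x)\mathbf{\Delta}(-h,x)=\mathrm{id}=\mathbf{\Delta}(-h,x)\mathbf{\Delta}(h,x)$. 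Hence $\mathbf{\Delta}(h,x)\in G^0(V)$ with inverse $\mathbf{\Delta}(-h,x)$, and all the cited VOA-level computations from \cite{Li97} go through because each uses only vertex-algebra bracket identities, not finiteness of graded pieces or the presence of a conformal vector.
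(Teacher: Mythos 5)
Your overall route is the same as the paper's: verify \eqref{Delta1}--\eqref{Delta4}, defer \eqref{Delta4} to Li's Lemma 3.3, and exhibit $\mathbf{\Delta}(-h,x)$ as a two-sided inverse. But there is one genuine gap, and it sits exactly at the step the paper spends most of its effort on. When you compute $[h_k,h_j]=\sum_{i\geq 0}\binom{k}{i}(h_ih)_{k+j-i}$ you dispose of the terms $i\geq 2$ (zero by hypothesis) and $i=1$ (a positive mode of $\gamma\mathbf 1$, hence zero), but you never address the $i=0$ term $(h_0h)_{k+j}$. The hypothesis $h_nh=\delta_{n,1}\gamma\mathbf 1$ is only stated for $n\geq 1$, so it says nothing about $h_0h$, and semisimplicity of $h_0$ does not force $h_0h=0$ either. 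This is not a cosmetic omission: if $h_0h=\lambda h$ with $\lambda\neq 0$, then $[h_0,h_k]=\lambda h_k$, so $x^{h_0}$ conjugates the exponential factor nontrivially ($x^{h_0}h_kx^{-h_0}=x^{\lambda}h_k$), and your claim that ``the two exponential factors can be slid past the $x^{\pm h_0}$ factors'' and that $\mathbf{\Delta}(h,x)\mathbf{\Delta}(-h,x)=\mathrm{id}$ would both fail. The paper closes this hole by a short skew-symmetry argument: from $Y(u,x)v=e^{xT}Y(v,-x)u$ one gets $h_0h=-h_0h+\sum_{i\geq 1}(-1)^{i-1}\frac{T^i}{i!}h_ih$, and since $h_ih=\delta_{i,1}\gamma\mathbf 1$ and $T\mathbf 1=0$ the sum vanishes, giving $h_0h=0$. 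You need to insert this (or an equivalent) argument; once it is in place, $h_0$ and all $h_k$ with $k\geq 1$ pairwise commute and the rest of your proof goes through.

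A secondary, smaller point: in your sketch of \eqref{Delta4} you assert that for a general mode $v_m$ ``the higher $h_iv$ vanish'' and that $h_1v$ only contributes a central term. That is only true for $v=h$; for general $v$ the commutator $[h_k,v_m]=\sum_i\binom{k}{i}(h_iv)_{k+m-i}$ has no such collapse, and Li's actual proof of the conjugation identity does not rely on one. Since you (like the paper) ultimately defer to \cite[Lemma 3.3]{Li97}, this does not break the proof, but the justification you give for why that lemma's computation ``collapses'' is not the right one.
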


\begin{proof}
We first note that by the commutator formula for vertex algebra modes and the property $h_nh = \delta_{n,1} \gamma {\bf 1}$, we have that 
\begin{eqnarray} 
[h_m,h_n] &=& \sum_{j \in \mathbb{Z}_{\geq 0}} \binom{m}{j} (h_jh)_{m+n - j} \ =  (h_0h)_{m+n} + m \gamma {\bf 1}_{m +n - 1} \nonumber \\
&=&  \lambda h_{m+n} + m \gamma \delta_{m+n,0} \label{heisenberg-comm}
\end{eqnarray}
for some $\lambda \in \mathbb{C}$ which is the eigenvalue for $h$ by hypothesis.  In fact $h_0h = 0$, i.e., $\lambda = 0$ in Eqn.\ \eqref{heisenberg-comm} due to the following argument:
Recall that for $u,v\in V$, we have $Y(u,x)v=e^{xT}Y(v,-x)u$. Hence, $u_0v=\sum_{i=0}^{\infty}(-1)^{-i-1}\frac{T^i}{i!}v_iu$. In particular, we have $h_0h=-h_0h+\sum_{i=1}^{\infty}(-1)^{i-1}\frac{T^i}{i!}h_ih$. Since $h_nh=\delta_{n,1}\gamma{\bf 1}$ for $n\geq 1$ and $T({\bf 1})=0$, we then have that $h_0h=-h_0h$. Therefore, $h_0h=0$. 
Thus, in particular, by Eqn.\ \eqref{heisenberg-comm}, $[h_0, h_k] = 0$ for all $k \in \mathbb{Z}_{>0}$. Then
writing $\mathbf{\Delta}(h,x) = x^{h_0} \exp(A)$, we have $\mathbf{\Delta}(-h,x) = x^{-h_0} \exp(-A)$, and we also have that $A, -A$ and $h_0$ all pairwise commute.  Thus  
\[ \mathbf{\Delta}(h,x) \mathbf{\Delta}(-h,x) = x^{h_0} \exp(A) x^{-h_0} \exp(-A) = x^{h_0} x^{-h_0} \exp(A) \exp(-A)  = 1.\]
Thus $\mathbf{\Delta}(h,x)$ is invertible with inverse $\mathbf{\Delta}(-h,x)$. We still need to prove that properties \eqref{Delta1}--\eqref{Delta4} hold for $\mathbf{\Delta}(h,x)$. The fact that $\mathbf{\Delta}(h,x)v \in V[x,x^{-1}]$ for all $V$ follows from the truncation property for $V$.
The fact that 
$\mathbf{\Delta}(h,x){\bf 1} = {\bf 1}$ follows from the creation property, i.e., $h_{k}{\bf 1} = 0$ for $k \geq 0$. To prove $[T, \mathbf{\Delta}(h,x)] = - \frac{d}{dx} \mathbf{\Delta}(h,x)$ we first note that by the $T$-bracket derivative property, we have that 
\[[T,h_k] = -k h_{k-1} \quad \mbox{for } k \in \mathbb{Z}.\]
Next recall that since $\ad(T)$ is a derivation, we have $[T,e^A] = [T,A]e^A$, and thus
\begin{eqnarray*}
[T, \mathbf{\Delta}(h,x)] &=& [T, x^{h_0}] e^A + x^{h_0} [T, e^A ] \\
&=& 0 + x^{h_0} [T,A]e^A\\
&=& x^{h_0} \left( \sum_{k = 1}^\infty h_{k-1} (-x)^{-k} \right) \exp\left(\sum_{k=1}^{\infty}\frac{h_k}{-k}(-x)^{-k}\right)\\
&=& x^{h_0}\left( -h_0 x^{-1} + \sum_{k = 1}^\infty h_k (-x)^{-k-1} \right) e^A \\
&=& - \frac{d}{dx} \mathbf{\Delta}(h,x).
\end{eqnarray*}
Finally, we note that property 
(\ref{Delta4}) is given by \cite[Lemma 3.3]{Li97} the proof of which follows as written in the case of a vertex algebra.  
\end{proof}

Note that if we allow $h_0$ to act semisimply on $V$ with eigenvalues in $\frac{1}{t}\mathbb{Z}$ for $t \in \mathbb{Z}_{>1}$, then as is shown in \cite{Li96-2}, if $(W, Y_W)$ is a weak $V$-module then $(\tilde{W}, Y_{\tilde{W}})$ is a weak $\sigma_h$-twisted $V$-module for $\sigma_h = e^{2\pi i h_0}$. Whereas if we allow only integer values, then $(\tilde{W}, Y_{\tilde{W}})$ is called a \emph{spectrally flowed weak $V$-module}.  The spectrally flowed $V$-modules are also often called ``twisted" modules, following the classical Lie theory terminology, but are not twisted $V$-modules in the vertex algebra theory sense.   

In \cite{Li96-2, Li97} both twisted and spectral flow settings are studied for, for instance, vertex operator algebras associated to affine Lie algebras and lattices.  In Section \ref{Weyl-spectral-flow} we will study certain $\WeylVA$-modules under spectral flow for $\WeylVA$ the Weyl vertex algebra with central charge $c = 2$.  
  
\subsection{Weakly interlocked modules}\label{subsec:weakly_interlocked}
One aspect of the representation theory of $\WeylVA$ we are interested in in the next section is the study of \emph{weakly interlocked modules}.
This property reflects the degree of rigidity of the internal structure of a module $M$ over a ring. It involves two specific submodules of $M$: the socle and the radical.
Recall that the \emph{socle} of a module $M$, denoted by $\Soc(M)$, is the sum of all its irreducible submodules, while the \emph{radical} of $M$, denoted by $\Rad(M)$, is the intersection of all its maximal proper submodules.
Then, we have the following definitions.

\begin{defn}\label{weakly-interlocked-def}
\phantom{x}
\begin{enumerate}[leftmargin=*,label=(\roman*)]
\item Let $A$ be an associative algebra and $\overline{W}$ be an $A$-module.  We say that $\overline{W}$ is \emph{weakly interlocked} if $\overline{W}/\Soc(\overline{W}) \cong \Rad(\overline{W})$ and $\overline{W}/\Rad(\overline{W}) \cong \Soc(\overline{W})$.   
\item Similarly, for $V$ a vertex algebra and $W$ a weak $V$-module, $W$ is said \emph{weakly interlocked} if $W/\Soc(W) \cong \Rad(W)$ and $W/\Rad(W) \cong \Soc(W)$.   
\end{enumerate}
\end{defn}

\begin{rem}
This notion of weakly interlocked is a weaker notion than that of interlocked defined in \cite{Miy1} (see also \cite{BBOHY}) and \emph{strongly interlocked} defined in \cite{BBOHY}. 
These are settings in which \emph{pseudo-traces} have been shown to be well defined for certain vertex operator algebras and their modules.  The notion of weakly interlocked is a minimal condition for any of the known settings in which well-defined pseudo-traces occur.
\end{rem}

We have the following result about $V$-modules induced from Zhu algebras:

\begin{thm}\label{Zhu-weakly-interlocked-thm}
Let $V$ be a vertex algebra, and $Z$ an indecomposable $\Zhu_d(V)$-module such that no nontrivial submodule factors through $\Zhu_{d-1}(V)$.
If $Z$ is not weakly interlocked, then $\mathscr{L}_d(Z)$ is an indecomposable $\mathbb{Z}_{\geq 0}$-gradable $V$-module that is not weakly interlocked.    
\end{thm}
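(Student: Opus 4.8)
The plan is to exploit the functoriality and faithfulness of Zhu's induction-restriction correspondence between $\Zhu_d(V)$-modules and $\mathbb{Z}_{\geq 0}$-gradable $V$-modules, together with the compatibility between the submodule lattices on the two sides. First I would record the structural dictionary: by the results recalled in Section~\ref{subsec:zhu_correspondence}, the functor $\mathscr{L}_d$ from $\Zhu_d(V)$-modules (with no nonzero submodule factoring through $\Zhu_{d-1}(V)$) to $\mathbb{Z}_{\geq 0}$-gradable $V$-modules, together with the functor $\Omega_d/\Omega_{d-1}$ going back, satisfy $\Omega_d/\Omega_{d-1}(\mathscr{L}_d(Z)) \cong Z$, and moreover $\mathscr{L}_d(Z)$ is generated by its degree-$d$ piece as a weak $V$-module (this is the universal property). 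From this one deduces that $\mathscr{L}_d$ induces an order-preserving injection of the submodule lattice of $Z$ into that of $\mathscr{L}_d(Z)$: for a $\Zhu_d(V)$-submodule $Z' \subseteq Z$, the $V$-submodule of $\mathscr{L}_d(Z)$ generated by $Z'$ has degree-$d$ piece exactly $Z'$, because $Y_{\mathscr{L}_d(Z)}$-modes cannot create new degree-$d$ elements outside $\mathfrak{L}(V)^f(0)\cdot Z' = \Zhu_d(V)\cdot Z' = Z'$. Conversely, $\Omega_d/\Omega_{d-1}$ sends $V$-submodules of $\mathscr{L}_d(Z)$ back to $\Zhu_d(V)$-submodules of $Z$, and these two assignments are mutually inverse on the relevant sublattices, giving a lattice isomorphism onto the submodules of $\mathscr{L}_d(Z)$ that are ``generated in degree $d$''. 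The key point will be that \emph{every} nonzero submodule and every maximal proper submodule of $\mathscr{L}_d(Z)$ lies in this sublattice: the whole module is generated in degree $d$, so any maximal proper submodule $N$ must have $\Omega_d/\Omega_{d-1}(N) \subsetneq Z$ and is the submodule generated by $\Omega_d/\Omega_{d-1}(N)$ plus possibly lower-degree material --- here one argues that $N$ must be generated in degree $d$ since otherwise enlarging it by the degree-$d$ part of the module would contradict maximality, or rather one shows $N$ is contained in the submodule generated by $N \cap \mathscr{L}_d(Z)(d)$; and the socle is a sum of irreducibles, each of which is of the form $\mathscr{L}_d(Y)$ for an irreducible $\Zhu_d(V)$-submodule $Y$ by Theorem~\ref{thm:zhu_correspondence}.

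Granting the lattice correspondence, the proof proceeds as follows. Indecomposability: $\mathscr{L}_d(Z)$ is indecomposable because a direct sum decomposition $\mathscr{L}_d(Z) = M_1 \oplus M_2$ with both summands nonzero would restrict under $\Omega_d/\Omega_{d-1}$ to a nontrivial decomposition of $Z$ (both $\Omega_d/\Omega_{d-1}(M_i)$ are nonzero since $M_i$ is generated in degree $d$ and nonzero), contradicting indecomposability of $Z$. Socle: under the lattice isomorphism, $\Soc(\mathscr{L}_d(Z)) = \mathscr{L}_d(\Soc(Z))$, since irreducible $V$-submodules correspond exactly to irreducible $\Zhu_d(V)$-submodules (Theorem~\ref{thm:zhu_correspondence} plus the fact that no submodule of $Z$ factors through $\Zhu_{d-1}(V)$, so the hypothesis transfers to submodules). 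Radical: similarly $\Rad(\mathscr{L}_d(Z)) = \mathscr{L}_d(\Rad(Z))$, as maximal proper $V$-submodules correspond to maximal proper $\Zhu_d(V)$-submodules under the lattice isomorphism. Then using that $\mathscr{L}_d$ respects quotients in the appropriate sense --- $\mathscr{L}_d(Z)/\mathscr{L}_d(Z') \cong \mathscr{L}_d(Z/Z')$ when $Z'$ has no submodule factoring through $\Zhu_{d-1}(V)$, or at least that the degree-$d$ piece of the quotient is $Z/Z'$ --- we get $\mathscr{L}_d(Z)/\Soc(\mathscr{L}_d(Z))$ has degree-$d$ piece $Z/\Soc(Z)$ and $\mathscr{L}_d(Z)/\Rad(\mathscr{L}_d(Z))$ has degree-$d$ piece $Z/\Rad(Z)$, while $\Rad(\mathscr{L}_d(Z))$ has degree-$d$ piece $\Rad(Z)$ and $\Soc(\mathscr{L}_d(Z))$ has degree-$d$ piece $\Soc(Z)$. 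If $\mathscr{L}_d(Z)$ were weakly interlocked, i.e.\ $\mathscr{L}_d(Z)/\Soc(\mathscr{L}_d(Z)) \cong \Rad(\mathscr{L}_d(Z))$ and $\mathscr{L}_d(Z)/\Rad(\mathscr{L}_d(Z)) \cong \Soc(\mathscr{L}_d(Z))$, then applying $\Omega_d/\Omega_{d-1}$ (which is functorial and sends isomorphisms to isomorphisms) yields $Z/\Soc(Z) \cong \Rad(Z)$ and $Z/\Rad(Z) \cong \Soc(Z)$, contradicting the assumption that $Z$ is not weakly interlocked. Hence $\mathscr{L}_d(Z)$ is not weakly interlocked.

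I expect the main obstacle to be establishing precisely the lattice-theoretic compatibility statements: namely that (a) every submodule of $\mathscr{L}_d(Z)$ relevant to the socle and radical is generated by its degree-$d$ component, and (b) the functor $\mathscr{L}_d$ and the functor $\Omega_d/\Omega_{d-1}$ are mutually inverse on the sublattices of such submodules, including the behavior on quotients. The subtlety in (a) is that a general submodule of a $\mathbb{Z}_{\geq 0}$-gradable module need not be generated in its top or any single degree, so one must argue that maximal proper submodules and irreducible submodules are special: an irreducible submodule, being $\mathbb{Z}_{\geq 0}$-gradable with nonzero degree-$d$ piece forced by the non-factoring-through-$\Zhu_{d-1}$ hypothesis (this needs checking --- in principle an irreducible $V$-submodule could be concentrated below degree $d$, but such a submodule would be killed by $\Omega_d/\Omega_{d-1}$, and one must rule this out using that $Z$ has no submodule factoring through $\Zhu_{d-1}(V)$, which should propagate to $\mathscr{L}_d(Z)$ via the $\Omega_{d-1}$ functor); and a maximal proper submodule $N$ satisfies $N + (\text{submodule generated by }\mathscr{L}_d(Z)(d)) = N$ or $= \mathscr{L}_d(Z)$, and the latter is impossible if $N \supseteq$ that submodule would force $N = \mathscr{L}_d(Z)$, so $\mathscr{L}_d(Z)(d) \not\subseteq N$, and then $N \cap \mathscr{L}_d(Z)(d)$ is a proper $\Zhu_d(V)$-submodule of $Z$ whose generated $V$-submodule is contained in $N$ and, by maximality and the generation property, equals $N$. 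These arguments are essentially those used in \cite{DLM} and \cite{BVY} to prove $\Omega_d/\Omega_{d-1}(\mathscr{L}_d(Z)) \cong Z$, so I would cite and adapt those, keeping the exposition focused on the interlocked-specific conclusion.
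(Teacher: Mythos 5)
Your proposal is correct and follows essentially the same route as the paper's proof: identify $\Soc(\mathscr{L}_d(Z))=\mathscr{L}_d(\Soc(Z))$ and $\Rad(\mathscr{L}_d(Z))=\mathscr{L}_d(\Rad(Z))$ via the submodule correspondence coming from Zhu induction, then restrict the hypothetical weak-interlocking isomorphisms to the degree-$d$ component to deduce that $Z$ would be weakly interlocked, and conclude by contraposition. Your treatment is somewhat more careful about the lattice-compatibility details (and you additionally address indecomposability, which the paper's proof leaves implicit), but the underlying argument is the same.
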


\begin{proof}
Suppose $Z$ is a $\Zhu_d(V)$-module such that no nontrivial submodule is a module for $\Zhu_{d-1}(V)$. Set $W=\mathscr{L}_d(Z)$. 
Then $W(d) = Z$, and by Theorem \ref{thm:zhu_correspondence}, irreducible submodules of $W$ are in bijective correspondence with submodules $Z'\subset Z$.  
Thus $\mathscr{L}_d(\Soc(Z)) = \Soc(W)$.

Suppose $Z'$ is a maximal submodule of $Z$.  Then $Z/Z'$ is irreducible and $\mathscr{L}_d(Z/Z') = \mathscr{L}_d(Z)/\mathscr{L}_d(Z')$ is an irreducible $\mathbb{Z}_{\geq 0}$-gradable $V$-module, implying that $\mathscr{L}_d(Z')$ is a maximal submodule of $W$.  This implies that $\mathscr{L}_d(\Rad(Z)) = \Rad(W)$ and thus $\Rad(W)(d) = \Rad(Z)$.  

Therefore if $W$ is weakly interlocked, then $W(d)/\Soc(W)(d) \cong (W/\Soc(W))(d) \cong (\Rad(W))(d)$ implying that $Z/\Soc(Z) \cong \Rad(Z)$. Similarly $W(d)/\Rad(W)(d) \cong (W/\Rad(W))(d) \cong \Soc(W)(d)$ implying that $Z/\Rad(Z) \cong \Soc(Z)$, and thus $Z$ is weakly interlocked.
\end{proof}

\begin{rem}
    The converse of Theorem \ref{Zhu-weakly-interlocked-thm} is not true, as illustrated in \cite{BBOHY} with the example of $V$ the universal Virasoro vertex operator algebra, and $W$ a $\mathbb{Z}_{\geq 0 }$-gradable $V$-module induced from the level zero Zhu algebra.  If $V$ has central charge $c$ and $Z$ is an indecomposable reducible module for the level zero Zhu algebra with conformal weight $h$,  then $Z$ will be weakly interlocked, but the resulting induced module $W = \mathscr{L}_0(Z)$ will not necessarily be weakly interlocked.  For instance, if $(c,h)$ is in the extended Kac table with $c\neq 1,25$, then $W = \mathscr{L}_0(Z)$ will not be weakly interlocked. 
\end{rem}

We have the following results for modules arising from twisting the action by the operator $\mathbf{\Delta}(x)\in G^0(V)$.

\begin{thm}\label{spectral-flow-thm} Let $V$ be a vertex algebra, $W$  a weak $V$-module, and $\mathbf{\Delta}(x)\in G^0(V)$.  
Then \begin{enumerate}[leftmargin=*]
\item $(M, Y_M)$ is a socle of $(W,Y_W)$ if and only if  $(\tilde{M}, Y_{\tilde{M}})$ is a socle of $(\tilde{W},Y_{\tilde{W}})$.
\item $(M, Y_M)$ is a radical of $(W,Y_W)$ if and only if  $(\tilde{M}, Y_{\tilde{M}})$ is a radical of $(\tilde{W},Y_{\tilde{W}})$.
\item $(W,Y_W)$ is weakly interlocked if and only if $(\tilde{W},Y_{\tilde{W}})$ is weakly interlocked.
\end{enumerate}
In particular, if $\mathbf{\Delta}(x) = \mathbf{\Delta}(h,x)$ as defined in Proposition \ref{define-Delta(h,x)}, then (1)-(3) hold. 
\end{thm}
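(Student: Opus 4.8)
The plan is to observe that twisting by $\mathbf{\Delta}(x)$ is an autoequivalence of the category of weak $V$-modules, and to let this autoequivalence transport all the module-theoretic data (submodules, quotients, isomorphisms, irreducibility, maximality) occurring in the definition of \emph{weakly interlocked}. Write $\mathcal{T}$ for the assignment $W \mapsto \tilde W = (W, Y_W(\mathbf{\Delta}(x)\cdot, x))$, acting as the identity on underlying vector spaces and on $V$-module homomorphisms; by Proposition~\ref{hom Delta}(1) this is a functor on weak $V$-modules, and since $\mathbf{\Delta}^{-1}(x) \in G^0(V)$ by Proposition~\ref{hom Delta}(2) and $Y_W(\mathbf{\Delta}^{-1}(x)\mathbf{\Delta}(x)\cdot,x) = Y_W(\cdot,x)$, the functor given by twisting with $\mathbf{\Delta}^{-1}(x)$ is a two-sided inverse of $\mathcal{T}$.

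I would first record three elementary consequences. (a) A subspace $N \subseteq W$ is a weak $V$-submodule of $(W,Y_W)$ if and only if the subspace $\tilde N \subseteq \tilde W$ is a weak $V$-submodule of $(\tilde W, Y_{\tilde W})$ -- one direction is already contained in the proof of Theorem~\ref{first-Delta-thm}, and the other follows by applying the inverse twist $\mathbf{\Delta}^{-1}(x)$ -- and every submodule of $\tilde W$ arises this way. (b) $\mathcal{T}$ is exact: for a submodule $N\subseteq W$ the quotient map $W \twoheadrightarrow W/N$ is a $V$-module map, hence by Proposition~\ref{hom Delta}(1) a surjection $\tilde W \twoheadrightarrow \widetilde{W/N}$ with kernel $\tilde N$, which identifies $\widetilde{W/N}$ with $\tilde W/\tilde N$ as weak $V$-modules; similarly $\mathcal{T}$ carries isomorphisms to isomorphisms. (c) By Theorem~\ref{first-Delta-thm}(1) a weak $V$-module is irreducible if and only if its $\mathcal{T}$-image is; combining this with (b), a submodule $N\subseteq W$ is maximal proper if and only if $W/N$ is irreducible, if and only if $\tilde W/\tilde N$ is irreducible, if and only if $\tilde N$ is maximal proper in $\tilde W$.

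For part~(1): by (a) and (c) the irreducible submodules of $\tilde W$ are exactly the $\tilde N$ with $N$ an irreducible submodule of $W$; since the underlying subspace of $\tilde N$ is $N$, we get $\widetilde{\Soc(W)} = \sum_N \tilde N = \Soc(\tilde W)$. Hence $M = \Soc(W)$ forces $\tilde M = \Soc(\tilde W)$, and running the same argument with $\mathbf{\Delta}^{-1}(x)$ gives the converse. Part~(2) is identical with ``irreducible submodule'' replaced by ``maximal proper submodule'' and the sum replaced by an intersection, yielding $\widetilde{\Rad(W)} = \Rad(\tilde W)$. For part~(3): if $(W,Y_W)$ is weakly interlocked there are $V$-module isomorphisms $W/\Soc(W) \cong \Rad(W)$ and $W/\Rad(W) \cong \Soc(W)$; applying $\mathcal{T}$ and using (b) together with parts~(1)--(2) turns these into $V$-module isomorphisms $\tilde W/\Soc(\tilde W) \cong \Rad(\tilde W)$ and $\tilde W/\Rad(\tilde W) \cong \Soc(\tilde W)$, so $(\tilde W, Y_{\tilde W})$ is weakly interlocked; the converse follows once more by twisting with $\mathbf{\Delta}^{-1}(x)$. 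The final assertion is immediate since $\mathbf{\Delta}(h,x) \in G^0(V)$ by Proposition~\ref{define-Delta(h,x)}.

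The only point requiring care is the exactness statement (b), that quotients and isomorphisms of weak $V$-modules are transported correctly by $\mathcal{T}$; but this is bookkeeping built directly on Proposition~\ref{hom Delta}(1), and the one genuinely substantive input is Theorem~\ref{first-Delta-thm} (preservation and reflection of irreducibility), which is already established. I therefore anticipate no real obstacle.
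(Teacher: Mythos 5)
Your proposal is correct and follows essentially the same route as the paper: parts (1) and (2) are deduced from the preservation and reflection of irreducibility and maximality in Theorem \ref{first-Delta-thm}, part (3) is obtained by transporting the isomorphisms $W/\Soc(W)\cong\Rad(W)$ and $W/\Rad(W)\cong\Soc(W)$ through the twist via Proposition \ref{hom Delta} and the identification $\widetilde{W/M}=\tilde{W}/\tilde{M}$, and the converses follow from invertibility of $\mathbf{\Delta}(x)$. Your write-up in fact makes explicit the submodule-lattice correspondence that the paper leaves implicit when asserting (1) and (2) ``follow immediately,'' which is a welcome clarification but not a different argument.
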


\begin{proof} 
The statements (1) and (2) follow immediately from (1) and (2) of Theorem \ref{first-Delta-thm}. 
It remains to  prove (3).
For a weak $V$-module $(W,Y_W)$ and a submodule $(M,Y_M)$, recall that the quotient $(W/M,Y_{W/M})$ is again a  weak $V$-module with the action defined by $Y_{W/M}(v,z)(w+M)=Y_W(v,z)w+M$ for $v\in V$, $w\in W$. 
Hence, $(\widetilde{W/M},Y_{\widetilde{W/M}})$ is a weak $V$-module. We have $\widetilde{W/M}=\tilde{W}/\tilde{M}$ and $Y_{\widetilde{W/M}}(v,z)(w+\tilde{M})=Y_{\tilde{W}}(v,z)w+\tilde{M}$ for $v\in V$, $w\in \tilde{W}$.

Now, assume that $(W,Y_W)$ is a weakly interlocked  weak $V$-module; that is  $W$ satisfies $W/\Soc(W)\cong \Rad(W)$ and $W/\Rad(W)\cong\Soc(W)$ as $V$-modules. 
By Proposition \ref{hom Delta}, we have
    \[\begin{gathered}
        \widetilde{\Rad(W)}\cong \widetilde{W/\Soc(W)}=\tilde{W}/\widetilde{\Soc(W)}\\
        \widetilde{\Soc(W)}\cong \widetilde{W/\Rad(W)}=\tilde{W}/\widetilde{\Rad(W)}.
    \end{gathered}\]
Since $\widetilde{\Soc(W)}=\Soc(\tilde{W})$ and $\widetilde{\Rad(W)}=\Rad(\tilde{W})$ by (1) and (2), we have
\[\Rad(\tilde{W})\cong\tilde{W}/\Soc(\tilde{W})\quad\text{and}\quad\Soc(\tilde{W})\cong\tilde{W}/\Rad(\tilde{W}).\]
Therefore, $\tilde{W}$ is weakly interlocked.
Similarly, using the fact that $\mathbf{\Delta}(x)$ is invertible, it follows that if $\tilde{W}$ is weakly interlocked, then $W$ is weakly interlocked. 
\end{proof}

\begin{rem}  In Section \ref{Weyl-spectral-flow}, we apply Theorem \ref{spectral-flow-thm} to modules for the Weyl vertex algebra at central charge 2.  This Theorem will also apply to, for instance, the settings of affine vertex algebras and $\mathcal{W}$-algebras (cf. \cite{CR12, CR, CR13, ACKR,  AKR21, NOHRCW, CMY, C, FRR, ACK} among many others) where spectral flow of modules figures prominently, and graded pseudo-traces would be of interest.    
\end{rem}

\section{Weight Modules for the Weyl vertex algebra at \texorpdfstring{$c = 2$}{c=2}}\label{Weyl-modules-first-section}

In this section, we focus on the classification of $\mathbb{Z}_{\geq0}$-gradable $\WeylVA$-modules  that admit a weight space decomposition with respect to the Heisenberg field $J$ defined in \eqref{eq:heisenberg_field} and are either induced from a Zhu algebra for $\WeylVA$, or are the spectral flow of a module induced from a Zhu algebra. These two categories of modules are related to those studied in, for instance \cite{RW, AW}, as we note below.  

We prove that none of the indecomposable reducible $\WeylVA$-modules induced from the Zhu algebras or realized under spectral flow are weakly interlocked in the sense of Definition \ref{weakly-interlocked-def}. 
This confirms that the category $\mathscr{F}$ studied in \cite{RW, AW} along with its doubly-graded traces is the correct setting for modularity for $\WeylVA$ as shown in those works.

\subsection{Classification of \texorpdfstring{$\WeylVA$}{V}-modules induced from Zhu algebras}
 We begin by focusing on $\mathbb{Z}_{\geq 0}$-gradable weight modules of $\WeylVA$ which are induced from the Zhu algebras. (Note that in general for any $V$, modules induced from a Zhu algebra are $\mathbb{Z}_{\geq 0}$-gradable.)
In fact, Corollary \ref{Zhu-alg} ensures that it suffices to consider only the level zero Zhu algebra of $\WeylVA$. 

\begin{prop}\label{morita-prop}
All $\mathbb{Z}_{\geq 0}$-gradable $\WeylVA$-modules that are induced from a $\hZhu{n}(\WeylVA)$-module ($n\geq0$) are induced from the level zero Zhu algebra, i.e., are induced from modules for the Weyl algebra $\Weylalg$. 
\end{prop}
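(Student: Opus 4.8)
The plan is to combine the structure theorem for the higher level Zhu algebras of $\WeylVA$ (Corollary \ref{Zhu-alg}) with Morita theory and the basic properties of the induction functors $\mathscr{L}_n$ recalled in Section \ref{subsec:zhu_correspondence}. By Corollary \ref{Zhu-alg}, for every $n\geq 1$ one has $\hZhu{n}(\WeylVA)\cong\bigoplus_{j=0}^n\mathfrak{A}_j(\WeylVA)$ with $\mathfrak{A}_j(\WeylVA)\cong\Mat_{|P_2(j)|}(\Weylalg)$, and under this identification the canonical projection $\hZhu{n}(\WeylVA)\to\hZhu{n-1}(\WeylVA)$ kills the top summand $\mathfrak{A}_n(\WeylVA)$; in particular a $\hZhu{n}(\WeylVA)$-module fails to factor through $\hZhu{n-1}(\WeylVA)$ exactly when its component over the block $\mathfrak{A}_n(\WeylVA)$ is nonzero. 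Using the central idempotents of the blocks, any $\hZhu{n}(\WeylVA)$-module $Z$ splits as $Z=\bigoplus_{j=0}^n Z_j$ with $Z_j$ a $\mathfrak{A}_j(\WeylVA)$-module, and each indecomposable summand of $Z_j$ is, via the Morita equivalence $\Mat_{|P_2(j)|}(\Weylalg)\sim\Weylalg$, of the form $\C^{|P_2(j)|}\otimes U$ for a $\Weylalg$-module $U$. Since $\mathscr{L}_n$ is additive, and since $\mathscr{L}_n$ applied to a module inflated from level $j<n$ coincides, as an ungraded $\WeylVA$-module, with $\mathscr{L}_j$ of that module (a shift of the same construction), an induction on $n$ reduces everything to the single assertion
\[
\mathscr{L}_n\big(\C^{|P_2(n)|}\otimes U\big)\;\cong\;\mathscr{L}_0(U)\qquad\text{for every }\Weylalg\text{-module }U.
\]

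To establish this isomorphism I would compare the two sides degree by degree. On one side, $\mathscr{L}_0(U)$ is a $\Z_{\geq0}$-gradable $\WeylVA$-module with $\mathscr{L}_0(U)(0)\cong U$; using the PBW description of $\WeylVA$ \eqref{eq:PBW}, the decomposition $\WeylVA_d\cong\WeylVA^{\tres}_d\ltimes\WeylVA_0$ with $\dim\WeylVA^{\tres}_d=|P_2(d)|$ from \eqref{Md}, and the Weyl commutation relations \eqref{com}, one checks that $\Omega_n/\Omega_{n-1}\big(\mathscr{L}_0(U)\big)\cong\C^{|P_2(n)|}\otimes U$ as $\hZhu{n}(\WeylVA)$-modules, with the $\C^{|P_2(n)|}$ factor playing the role of $\WeylVA^{\tres}_n$ on which $\mathfrak{A}_n(\WeylVA)\cong\Mat_{|P_2(n)|}(\Weylalg)$ acts tautologically and the lower blocks acting trivially. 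On the other side, since $\C^{|P_2(n)|}\otimes U$ has no nonzero submodule factoring through $\hZhu{n-1}(\WeylVA)$, the module $\mathscr{L}_n(\C^{|P_2(n)|}\otimes U)$ is a $\Z_{\geq0}$-gradable $\WeylVA$-module whose level-$n$ subquotient recovers $\C^{|P_2(n)|}\otimes U$ (the Zhu-type correspondence of Section \ref{subsec:zhu_correspondence}). One then shows that $\mathscr{L}_n(\C^{|P_2(n)|}\otimes U)$ is generated as a weak $\WeylVA$-module by its degree-zero subspace, identifies that subspace with $U$, and uses the universal property of $\mathscr{L}_0$ together with the nondegeneracy built into the definitions of $\mathscr{L}_n$ and $\mathscr{L}_0$ to conclude that the canonical comparison morphism $\mathscr{L}_0(U)\to\mathscr{L}_n(\C^{|P_2(n)|}\otimes U)$ is an isomorphism.

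For \emph{irreducible} $Z$ there is a shortcut that I would record first: $\mathscr{L}_n(Z)$ is then an irreducible $\Z_{\geq0}$-gradable $\WeylVA$-module with nonzero degree-zero part, hence by Theorem \ref{thm:zhu_correspondence} applied at level $0$ it is isomorphic to $\mathscr{L}_0\big(\Omega_0(\mathscr{L}_n(Z))\big)$, which is induced from the Weyl algebra $\Weylalg$. The crux of the general (reducible) case — and the step I expect to be the main obstacle — is the claim that $\mathscr{L}_n(\C^{|P_2(n)|}\otimes U)$ is generated in degree zero: this is precisely where the special nature of $\WeylVA$ is used, namely that passing to a higher level Zhu algebra only introduces matrix multiplicities over the fixed algebra $\Weylalg=\hZhu{0}(\WeylVA)$, mirrored on the module side by $\WeylVA_d$ being a free $\WeylVA_0$-module of rank $|P_2(d)|$. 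In contrast, for a general vertex algebra (for instance the Virasoro vertex algebra) higher-level induced indecomposable modules need not be generated in low degree, and the analogous statement fails.
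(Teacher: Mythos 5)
Your proposal is correct and follows essentially the same route as the paper: both arguments rest on the block decomposition $\hZhu{n}(\WeylVA)\cong\bigoplus_{j=0}^{n}\Weylalg\otimes\Mat_{|P_2(j)|}(\C)$ from Corollary \ref{Zhu-alg}, the Morita equivalence of $\Mat_{|P_2(n)|}(\Weylalg)$ with $\Weylalg$, and the identification of the level-$n$ subspace of the induced module with $\WeylVA_n\cdot W(0)\cong\C^{|P_2(n)|}\otimes W(0)$, giving $\mathscr{L}_n(Z)=\mathscr{L}_0(W(0))$. Your degree-by-degree verification of $\mathscr{L}_n\bigl(\C^{|P_2(n)|}\otimes U\bigr)\cong\mathscr{L}_0(U)$, including the generation-in-degree-zero step, simply spells out in more detail what the paper asserts in one line.
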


\begin{proof} Let $W$ be a  $\mathbb{Z}_{\geq 0}$-gradable $\WeylVA$-module induced at level $n$, then $W = \mathscr{L}_n(Z)$ for some $\hZhu{n}(\WeylVA)$-module $Z$.  

Assume first, that no nontrivial proper submodule of $Z$ factors through $\hZhu{n-1}(\WeylVA)$,  then by Corollary \ref{Zhu-alg}, $Z$ is a module for $\Weylalg \otimes \Mat_{|P_2(n)|}(\mathbb{C})$. Then $Z$ is Morita equivalent to a $\Weylalg$-module, and we also have that $\Weylalg \cong \Zhu_0(\WeylVA)$.  In fact, we claim that this Morita equivalence is realized by
\[ Z = W(n) \longleftrightarrow W(0) \] 
where $W(0)$ is the level zero of the module $W$ and 
\[ Z = W(n)  = \WeylVA_n. W(0) \]
where  $\WeylVA_n$ is the $\Mat_{|P_2(n)|}(\mathbb{C})$-module given by the PBW basis vectors of weight $n$ in $\WeylVA$, described in Eqn.\ \eqref{Md}. 
This realization implies that the Zhu-inductions of both $Z$ and $W(0)$ give rise to the same $\WeylVA$-module:
\[W = \mathscr{L}_n(Z) = \mathscr{L}_0(W(0)).\]

If a nontrivial proper submodule of $Z$ factors through $\hZhu{n-1}(\WeylVA)$, then $Z$ decomposes as $Z = Z' \oplus Z''$ where $Z'$ is a $\hZhu{n-1}(\WeylVA)$-modules and $Z''$ is a module for $\Weylalg \otimes \Mat_{|P_2(n)|}(\mathbb{C})$.  
Then $Z'$ corresponds to a $\WeylVA$-module $W'= \mathscr{L}_{n-1}(Z')$ that is induced from the level zero Zhu algebra by the previous discussion, $W'= \mathscr{L}_0(W'(0))$.
Moreover, as in the previous case, $Z''$ is Morita equivalent to a $\Weylalg$-module $W''(0)$ and their inductions give rise to the same $\Weylalg$-module
\[W''= \mathscr{L}_n(Z'') = \mathscr{L}_0(W''(0)).\]
But then
\[W\cong\mathscr{L}_n(Z)= \mathscr{L}_{n-1}(Z') \oplus \mathscr{L}_n(Z'')= W'\oplus W''\]
is induced at level zero since $W'$ and $W''$ are by induction: $W=\mathscr{L}_0(W'(0)\oplus W''(0))$.

\end{proof}

In the following, we will be interested in \emph{weight} $\WeylVA$-modules  which are defined to be (weak) $\WeylVA$-modules on which the zero mode of the Heisenberg field $J=a_{-1}a_0^*\vac$ defined in \eqref{eq:heisenberg_field} acts semi-simply. As noted before, they have been intensively studied in \cite{RW} and \cite{AW}, for instance. 

In regard to Zhu's correspondence, $\Z_{\geq0}$-graded weight $\WeylVA$-modules are induced from \emph{weight $\Weylalg$-modules}, that are  $\Weylalg$-modules on which the product $aa^*\in \Weylalg$ acts semi-simply.
Irreducible weight modules for the Weyl algebra $\Weylalg$ were classified by Block in \cite{Block}. Here we give this classification following the notation of \cite{AW}.

Any irreducible weight $\Weylalg$-module is isomorphic to one and only one of the following:
\begin{enumerate}[leftmargin=*]
    \item $\overline{\WeylVA} = \C[x]$, where $a$ acts as $\frac{\partial}{\partial x}$ and $a^*$ acts as multiplication by $x$.
    \item ${\bf c}\overline{\WeylVA} = \C[x]$, where $a^*$ acts as $-\frac{\partial}{\partial x}$ and $a$ acts as multiplication by $x$. Note that ${\bf c}\overline{\WeylVA}=\varphi_1(\overline{\WeylVA})$, with $\varphi_1$ defined in \eqref{eq:varphi_def} hence, ${\bf c}\overline{\WeylVA}$ is the same as $\overline{\WeylVA}$ as a vector space, but the action of $\Weylalg$ is given by that of $\varphi_1(\Weylalg)$.  Note that $\varphi_1(\overline{\WeylVA}) \cong \varphi_t(\overline{\WeylVA})$ for $t \in \C^\times$ via the vector space isomorphism $p(x) \mapsto tp(x)$ for any $p(x) \in \C[x]$ which gives a $\Weylalg$-module isomorphism. Also for all $t \in \C^\times$, we have that $\varphi_t^2 (\overline{\WeylVA}) \cong \overline{\WeylVA}$ via the $\Weylalg$-module isomorphism $p(x) \mapsto -p(x)$. 
    \item $\overline{\mathcal{W}_{\bar{\lambda}}} = \C[x,x^{-1}]x^\lambda$, for $\bar{\lambda} \in \C/\Z$, $\bar{\lambda}\neq0$, where $a$ acts as $\frac{\partial}{\partial x}$ and $a^*$ acts as multiplication by $x$. Remark that the module ${\bf c}\overline{\mathcal{W}_{\bar{\lambda}}} = \varphi_1 (\overline{\mathcal{W}_{\bar{\lambda}}}) = \C[x,x^{-1}]x^\lambda$ where $a^*$ acts as $-\frac{\partial}{\partial x}$ and $a$ acts as multiplication by $x$, is isomorphic to $\overline{\mathcal{W}_{-\bar{\lambda}}}$ via the $\Weylalg$-module isomorphism $x^{k+\lambda} \mapsto (-1)^k(1+ \lambda) (2 + \lambda) \dots  (k+\lambda) x^{-(k+1 + \lambda)}$.
\end{enumerate}
In addition, $\Weylalg$ admits the following indecomposable reducible modules:
\begin{enumerate}[leftmargin=*,resume]
    \item $\overline{\mathcal{W}^+_0} = \C[x,x^{-1}]$ where $a$ acts as $\frac{\partial}{\partial x}$ and $a^*$ acts as multiplication by $x$.  This module is uniquely characterized by the nonsplit exact sequence
    \[0 \longrightarrow \overline{\WeylVA} \longrightarrow \overline{\mathcal{W}_0^+} \longrightarrow {\bf c} \overline{\WeylVA} \longrightarrow 0 .\]
    \item $\overline{\mathcal{W}^-_0} = \C[x,x^{-1}]$ where $a^*$ acts as $-\frac{\partial}{\partial x}$ and $a$ acts as multiplication by $x$.  This module is uniquely characterized by the nonsplit exact sequence
    \[0 \longrightarrow {\bf c} \overline{\WeylVA} \longrightarrow \overline{\mathcal{W}_0^-} \longrightarrow  \overline{\WeylVA} \longrightarrow 0 .\]
\end{enumerate}
Note that only $\overline{\WeylVA}$ itself is a lowest weight module for $\Weylalg$ with respect to the action of $aa^*$ and corresponding grading, and ${\bf c}\overline{\WeylVA}$ is a highest weight module with respect to this action. The other modules are often called ``relaxed highest weight modules"  for $\Weylalg$, a term taken from \cite{FST}.

 Let $\mathscr{C}_Z$ be the category of $\mathbb{Z}_{\geq 0}$-gradable weight $\WeylVA$-modules that are induced from the Zhu algebras of $\WeylVA$.
By Proposition \ref{morita-prop}, these are all induced at level zero from the isomorphism classes of weight $\Weylalg$-modules.
Hence, inducing the $\Weylalg$-modules listed above in Cases (1)-(3), we have the irreducible weight $\WeylVA$-modules in the category $\mathscr{C}_Z$:
\begin{enumerate}[leftmargin=*]
    \item $\mathcal{V} = \mathscr{L}_0(\overline{\mathcal{V}})$;
    \item $\varphi_1 (\mathcal{V}) = {\bf c} \mathcal{V} = \mathscr{L}_0 ({\bf c} \overline{\mathcal{V}})$;
    \item $\mathcal{W}_{\bar{\lambda}} = \mathscr{L}_0(\overline{\mathcal{W}_{\bar{\lambda}}})$ for $\bar{\lambda} \in \mathbb{C}/\mathbb{Z}$, $\bar{\lambda}\neq0$.
\end{enumerate}
In addition, we have two indecomposable reducible  $\WeylVA$-modules:
\begin{enumerate}[leftmargin=*,resume]
    \item $\mathcal{W}^+_0 = \mathscr{L}_0(\overline{\mathcal{W}^+_0})$ that is uniquely characterized by the nonsplit exact sequence
    \[0 \longrightarrow \mathcal{V} \longrightarrow \mathcal{W}_0^+ \longrightarrow {\bf c} \mathcal{V} \longrightarrow 0 .\]
    \item $\mathcal{W}^-_0 = \mathscr{L}_0(\overline{\mathcal{W}^-_0})$ that is uniquely characterized by the nonsplit exact sequence
    \[0 \longrightarrow {\bf c} \mathcal{V} \longrightarrow \mathcal{W}_0^- \longrightarrow  \mathcal{V} \longrightarrow 0 .\]
\end{enumerate}

\begin{lem}  
All indecomposable reducible weight $\Weylalg$-modules are not weakly interlocked.
\end{lem}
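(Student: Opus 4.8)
The plan is to invoke the explicit classification of weight $\Weylalg$-modules recalled just above: the only indecomposable reducible weight $\Weylalg$-modules are $\overline{\mathcal{W}_0^+}$ and $\overline{\mathcal{W}_0^-}$. For each of them I will compute the socle and the radical directly from the submodule lattice and then simply check Definition~\ref{weakly-interlocked-def}.

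First I would determine the submodule lattice of $\overline{\mathcal{W}_0^+} = \C[x,x^{-1}]$, on which $a$ acts as $\tfrac{\partial}{\partial x}$ and $a^*$ as multiplication by $x$. The key claim is that every nonzero submodule contains the polynomial submodule $\overline{\WeylVA} = \C[x]$: given a nonzero Laurent polynomial $f$, applying $a^* = x\,\cdot$ enough times produces an honest polynomial with nonzero constant term, and then applying $a = \tfrac{\partial}{\partial x}$ the appropriate number of times yields the element $1$; since $(a^*)^k\cdot 1 = x^k$ for all $k \geq 0$, that submodule contains $\overline{\WeylVA}$. Because $\overline{\mathcal{W}_0^+}/\overline{\WeylVA} \cong {\bf c}\overline{\WeylVA}$ is irreducible, the submodules of $\overline{\mathcal{W}_0^+}$ are exactly $0 \subsetneq \overline{\WeylVA} \subsetneq \overline{\mathcal{W}_0^+}$. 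Hence $\Soc(\overline{\mathcal{W}_0^+}) = \overline{\WeylVA}$, the unique irreducible submodule, and $\Rad(\overline{\mathcal{W}_0^+}) = \overline{\WeylVA}$, the unique maximal proper submodule. The verification is then immediate: $\overline{\mathcal{W}_0^+}/\Soc(\overline{\mathcal{W}_0^+}) \cong {\bf c}\overline{\WeylVA}$, while $\Rad(\overline{\mathcal{W}_0^+}) = \overline{\WeylVA}$, and ${\bf c}\overline{\WeylVA}\not\cong\overline{\WeylVA}$ since these are distinct isomorphism classes of irreducible $\Weylalg$-modules (one a lowest-weight, the other a highest-weight module with respect to the action of $aa^*$). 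Thus the first condition of Definition~\ref{weakly-interlocked-def} already fails, so $\overline{\mathcal{W}_0^+}$ is not weakly interlocked; one could equally observe that the second condition fails.

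For $\overline{\mathcal{W}_0^-}$ I would observe that $\overline{\mathcal{W}_0^-} = \varphi_1(\overline{\mathcal{W}_0^+})$, where $\varphi_1$ is the algebra automorphism of $\Weylalg$ from \eqref{eq:varphi_def}; since twisting by an algebra automorphism is an isomorphism of submodule lattices and interchanges $\overline{\WeylVA} \leftrightarrow {\bf c}\overline{\WeylVA}$, the same computation gives $\Soc(\overline{\mathcal{W}_0^-}) = \Rad(\overline{\mathcal{W}_0^-}) = {\bf c}\overline{\WeylVA}$ and $\overline{\mathcal{W}_0^-}/\Soc(\overline{\mathcal{W}_0^-}) \cong \overline{\WeylVA} \not\cong {\bf c}\overline{\WeylVA}$, so $\overline{\mathcal{W}_0^-}$ is likewise not weakly interlocked; alternatively one repeats the lattice argument verbatim with the roles of $a$ and $a^*$ exchanged. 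There is no substantial obstacle in this proof; the one point that requires a moment's care is the submodule-lattice computation, namely showing that every nonzero submodule of $\overline{\mathcal{W}_0^+}$ already contains $\C[x]$, from which everything else follows formally.
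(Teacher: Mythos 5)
Your proof is correct and takes essentially the same route as the paper's: for each of $\overline{\mathcal{W}_0^{\pm}}$ one identifies $\Soc = \Rad$ as the unique irreducible submodule ($\overline{\WeylVA}$, resp.\ ${\bf c}\overline{\WeylVA}$) and observes that the quotient by the socle is the other irreducible, which is not isomorphic to the radical since $\overline{\WeylVA} \ncong {\bf c}\overline{\WeylVA}$. The only difference is that you verify the submodule lattice of $\overline{\mathcal{W}_0^{+}}$ by hand (and transport it to $\overline{\mathcal{W}_0^{-}}$ via $\varphi_1$), whereas the paper reads the socle and radical off directly from the nonsplit exact sequences characterizing these modules.
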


\begin{proof}
Both the indecomposable  reducible  $\Weylalg$-modules $\overline{\mathcal{W}_0^{\pm}}$ are not weakly interlocked precisely because $\overline{\mathcal{V}} \ncong {\bf c} \overline{ \mathcal{V}}$. 
For instance, $\Soc(\overline{\mathcal{W}_0^+}) = \Rad(\overline{\mathcal{W}_0^+}) =  \overline{\mathcal{V}}$ and $\overline{\mathcal{W}_0^+}/\Soc(\overline{\mathcal{W}_0^+}) \cong {\bf c} \overline{\mathcal{V}} \ncong \overline{\mathcal{V}} = \Rad(\overline{\mathcal{W}_0^+})$, proving that $\overline{\mathcal{W}^+_0}$ is not weakly interlocked.  Similarly $\Soc(\overline{\mathcal{W}_0^-}) = \Rad(\overline{\mathcal{W}_0^-}) = {\bf c} \overline{\mathcal{V}}$, and so 
$\overline{\mathcal{W}_0^-}/\Soc(\overline{\mathcal{W}_0^-})\cong  \overline{\mathcal{V}} \ncong {\bf c} \overline{\mathcal{V}} = \Rad(\overline{\mathcal{W}_0^-})$, proving that $\overline{\mathcal{W}^-_0}$ is not weakly interlocked.
\end{proof}

Zhu's correspondence then guarantees that there are no weakly interlocked weight $\WeylVA$-modules induced from weight $\Weylalg$-modules due to Theorem \ref{Zhu-weakly-interlocked-thm}, so that we have the following:

\begin{thm}\label{first-non-interlocked-thm}
All indecomposable reducible weight $\WeylVA$-modules in the category $\mathscr{C}_Z$ are not weakly interlocked.  That is, all indecomposable reducible weight $\WeylVA$-modules induced from a Zhu algebra for $\WeylVA$, are not weakly interlocked.
\end{thm}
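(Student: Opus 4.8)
The plan is to reduce the statement about vertex algebra modules to the already-established statement about modules for the Weyl algebra $\Weylalg$, using the two structural results proved earlier in the paper. First I would invoke Proposition~\ref{morita-prop}: every $\mathbb{Z}_{\geq 0}$-gradable weight $\WeylVA$-module in $\mathscr{C}_Z$ is actually induced at level zero, i.e. of the form $\mathscr{L}_0(Z)$ for some weight $\Weylalg$-module $Z$, since $\hZhu{0}(\WeylVA)\cong\Weylalg$. Thus the phrase ``indecomposable reducible weight $\WeylVA$-module in $\mathscr{C}_Z$'' can be replaced by ``$\mathscr{L}_0(Z)$ for $Z$ an indecomposable reducible weight $\Weylalg$-module.''

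The second step is to apply Theorem~\ref{Zhu-weakly-interlocked-thm} with $V = \WeylVA$ and $d = 0$. Here one needs to check the hypotheses of that theorem: $Z$ should be indecomposable, and (the condition on no nontrivial submodule factoring through $\hZhu{-1}(V)$) is vacuous at level $d=0$. So the theorem applies and tells us that if $Z$ is not weakly interlocked as a $\Weylalg$-module, then $\mathscr{L}_0(Z)$ is not weakly interlocked as a $\WeylVA$-module. The final ingredient is the Lemma immediately preceding the theorem, which states precisely that all indecomposable reducible weight $\Weylalg$-modules (namely $\overline{\mathcal{W}_0^+}$ and $\overline{\mathcal{W}_0^-}$ by Block's classification) fail to be weakly interlocked, because $\overline{\mathcal{V}}\ncong {\bf c}\overline{\mathcal{V}}$. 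Chaining these together gives the result.

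Concretely, the proof reads: Let $W$ be an indecomposable reducible weight $\WeylVA$-module in $\mathscr{C}_Z$. By Proposition~\ref{morita-prop}, $W \cong \mathscr{L}_0(Z)$ for some weight $\Weylalg$-module $Z$; since $W$ is indecomposable and reducible, so is $Z$ (an $\mathscr{L}_0$-decomposition of $Z$ would induce one of $W$, and a proper submodule of $Z$ induces a proper submodule of $W$). By the classification recalled above (due to Block), $Z$ is isomorphic to $\overline{\mathcal{W}_0^+}$ or $\overline{\mathcal{W}_0^-}$, and by the preceding Lemma neither of these is weakly interlocked. Theorem~\ref{Zhu-weakly-interlocked-thm} with $d=0$ (its submodule hypothesis being automatic at level zero) then yields that $W = \mathscr{L}_0(Z)$ is not weakly interlocked.

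I do not expect a serious obstacle here: this is a short assembly of prior results. The one point requiring a little care is verifying that indecomposability and reducibility transfer cleanly between $Z$ and $\mathscr{L}_0(Z)$ in both directions, which follows from the correspondence between submodules of $\mathscr{L}_0(Z)$ and submodules of $Z$ used in the proof of Theorem~\ref{Zhu-weakly-interlocked-thm} (irreducible submodules of $W$ correspond to submodules of $Z$, and $\mathscr{L}_0(\Soc(Z))=\Soc(W)$, $\mathscr{L}_0(\Rad(Z))=\Rad(W)$). Everything else is a direct citation.
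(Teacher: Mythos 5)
Your proposal is correct and follows essentially the same route as the paper: reduce to level-zero induction via Proposition~\ref{morita-prop}, invoke the preceding Lemma that $\overline{\mathcal{W}_0^{\pm}}$ are the only indecomposable reducible weight $\Weylalg$-modules and are not weakly interlocked, and conclude by Theorem~\ref{Zhu-weakly-interlocked-thm} with $d=0$. Your extra remarks (that the submodule hypothesis of Theorem~\ref{Zhu-weakly-interlocked-thm} is vacuous at level zero, and that indecomposability and reducibility transfer between $Z$ and $\mathscr{L}_0(Z)$) are correct and only make explicit what the paper leaves implicit.
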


\begin{rem}
 In \cite{RW} and \cite{AW}, the category $\mathscr{R}$ of weight $\WeylVA$-modules considered is a subcategory of the $\mathscr{C}_Z$ defined here, as they restrict to real weights to stress the desire to model physical theories.
 Thus Theorem \ref{first-non-interlocked-thm} also implies that none of the indecomposable reducible modules in $\mathscr{R}$ are weakly interlocked, a fact essentially observed in \cite{RW, AW} as well.
\end{rem}

\subsection{Spectral flow of weight \texorpdfstring{ $\mathbb{Z}_{\geq 0}$-gradable $\WeylVA$}{V}-modules induced from Zhu algebras}\label{Weyl-spectral-flow}

In this section we study the indecomposable $\mathbb{Z}_{\geq 0}$-gradable modules for $\WeylVA$ that arise from spectral flow of modules in category $\mathscr{C}_Z$. We denote this category by $\mathscr{C}$ prove that the indecomposable reducible $\WeylVA$-modules in category $\mathscr{C}$ are not weakly interlocked. 
This shows that these modules do not have well-defined doubly-graded pseudo-traces in the sense of extensions of known settings for graded pseudo-traces.

Recall that letting $J:=a_{-1}a^*_0{\bf 1}$, and $J(z)=Y(J,z)=\sum_{n\in\mathbb{Z}}J_nz^{-n-1}$, then $J$ generates a Heisenberg Lie algebra. In particular, $J \in \WeylVA_1$, and $J_nJ = -\delta_{n,1}{\bf 1}$ by Eqn.\ (\ref{eq:heisenberg_field}) and the fact that $J = J_{-1}{\bf 1}$. 
In addition, we have $[J_m,a_n]=-a_{n+m}$ and $[J_m,a^*_n]=a^*_{m+n}$ for $m,n\in\mathbb{Z}$. 
In particular, $J_0$ acts semi-simply with integer eigenvalues.  
Thus by Proposition \ref{define-Delta(h,x)}, for $\ell \in \mathbb{Z}$, we have that $\mathbf{\Delta} (-\ell J,x) \in G^0(\WeylVA)$ defines a spectral flow twist for $\WeylVA$-modules.

Let $(W,Y_W)$ be a $\mathbb{Z}_{\geq 0}$-gradable weight $\mathcal{V}$-module. For fixed $\ell \in \mathbb{Z}$, consider the module 
\begin{equation}
 (\tilde{W},Y_{\tilde{W}}(\cdot,x)) =(W,Y_W(\mathbf{\Delta}(-\ell J,x)\cdot,x)).   
\end{equation}
Analyzing the new action of $\WeylVA$ on $(\tilde{W}, Y_{\tilde{W}})$, we have 
\begin{equation*}
    \mathbf{\Delta}(-\ell J,x)a_{-1}{\bf 1} = x^\ell \exp(\sum_{n=1}^{\infty}\frac{-\ell J_n}{-n}(-x)^{-n})a_{-1}{\bf 1} = x^\ell a_{-1}{\bf 1},
    \end{equation*}
that is
\begin{equation*}
    Y_{\tilde{W}} (a_{-1}{\bf 1},x) = Y_W(\mathbf{\Delta}(- \ell J,x)a_{-1}{\bf 1},x)=Y_W(x^\ell a_{-1}{\bf 1},x),
\end{equation*}  
and
\begin{equation*}
    \mathbf{\Delta}(-\ell J,x)a^*_{0}{\bf 1} = x^{-\ell}\exp(\sum_{n=1}^{\infty}\frac{-\ell J_n}{-n}(-x)^{-n})a^*_{0}{\bf 1}=x^{-\ell} a^*_{0}{\bf 1},
    \end{equation*}
so that 
\begin{equation*}
    Y_{\tilde{W}}(a^*_{0}{\bf 1},x) = Y_W(\mathbf{\Delta}(-\ell J,x)a^*_{0}{\bf 1},x)=Y_W(x^{-\ell}a^*_{0}{\bf 1},x).
\end{equation*}
Thus setting $Y_{\tilde{W}} (v,x)=\sum_{n\in\mathbb{Z}}\tilde{v}_nx^{-n-1}$ and $Y_W(v,x)=\sum_{n\in\mathbb{Z}}v^W_nx^{-n-1}$ for $v \in \WeylVA$, we have \[(\widetilde{a_{-1}{\bf 1}})_n=(a_{-1}{\bf 1})^W_{n+\ell}=a_{n+\ell}^W, \quad \mbox{and} \quad (\widetilde{a^*_{0}{\bf 1}})_n=(a^*_{0}{\bf 1})^W_{n-\ell} = (a^*)^W_{n-\ell}.\]
Therefore, for each $\ell \in \mathbb{Z}$, the module $(\tilde{W}, Y_{\tilde{W}}(\cdot, x)) = (W,Y_W(\mathbf{\Delta}(-\ell J,x)\cdot,x))$ realizes the spectral flow $\sigma^\ell$ given by Eqn. \eqref{spectral-flow}, and we use the notation 
\[(\sigma^\ell(W),Y_\ell(\cdot,x)):=(W,Y_W(\mathbf{\Delta}(-\ell J,x)\cdot,x)).\] 

From Theorem \ref{first-Delta-thm},  we have that any irreducible module in $\mathscr{C}$ is isomorphic to one of the following mutually inequivalent modules
\[ \sigma^\ell (\WeylVA), \qquad \sigma^\ell (\mathcal{W}_{\bar{\lambda}}), \qquad \text{for } \ell \in \mathbb{Z} \text{ and } \bar{\lambda} \in \mathbb{C}/\mathbb{Z} \backslash\{\bar{0}\},\]
and the indecomposable reducible modules in $\mathscr{C}$ are given by
\[ \sigma^\ell(\mathcal{W}_0^{\pm})\quad \mbox{for $\ell \in \mathbb{Z}$}.\]

As a direct consequence of Theorem \ref{spectral-flow-thm}, we have the following.

\begin{cor}\label{non-interlocked-cor}
    All indecomposable reducible weight $\WeylVA$-modules in the category $\mathscr{C}$ are not weakly interlocked.
\end{cor}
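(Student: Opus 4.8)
The plan is to deduce Corollary~\ref{non-interlocked-cor} by combining Theorem~\ref{first-non-interlocked-thm} with Theorem~\ref{spectral-flow-thm}(3), using the identification of the indecomposable reducible modules in $\mathscr{C}$ established just above. First I would recall that, by the classification preceding the Corollary, every indecomposable reducible weight $\WeylVA$-module in $\mathscr{C}$ is isomorphic to $\sigma^\ell(\mathcal{W}_0^{\pm})$ for some $\ell \in \mathbb{Z}$, where $\sigma^\ell$ is realized by the invertible Li operator $\mathbf{\Delta}(-\ell J, x) \in G^0(\WeylVA)$ as shown in Section~\ref{Weyl-spectral-flow}. Here the hypotheses of Proposition~\ref{define-Delta(h,x)} are satisfied: $J \in \WeylVA_1$, $J_n J = -\delta_{n,1}{\bf 1}$, and $J_0$ acts semisimply with integer eigenvalues, so indeed $\mathbf{\Delta}(-\ell J, x) \in G^0(\WeylVA)$.

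Next I would invoke Theorem~\ref{first-non-interlocked-thm}, which states that $\mathcal{W}_0^+$ and $\mathcal{W}_0^-$ — the only indecomposable reducible weight $\WeylVA$-modules in the subcategory $\mathscr{C}_Z$ — are not weakly interlocked. Then, for each fixed $\ell \in \mathbb{Z}$, I would apply Theorem~\ref{spectral-flow-thm}(3) with $V = \WeylVA$, $W = \mathcal{W}_0^{\pm}$, and $\mathbf{\Delta}(x) = \mathbf{\Delta}(-\ell J, x)$: since $W$ is not weakly interlocked, the twisted module $\tilde{W} = \sigma^\ell(\mathcal{W}_0^{\pm})$ is not weakly interlocked either. (Strictly, Theorem~\ref{spectral-flow-thm}(3) is an ``if and only if'', and the relevant direction here is that non-weakly-interlocked is preserved under the twist; this direction uses that $\mathbf{\Delta}(x)$ is invertible so that $\widetilde{(\cdot)}$ commutes with forming submodules and quotients, and hence with $\Soc$ and $\Rad$.) Since every indecomposable reducible module in $\mathscr{C}$ is of this form, the Corollary follows.

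I do not expect any genuine obstacle here, as the Corollary is essentially an immediate bookkeeping consequence of the two cited theorems together with the structural description of $\mathscr{C}$. The only point requiring a word of care is making sure that the spectral flow images $\sigma^\ell(\mathcal{W}_0^{\pm})$ genuinely exhaust the indecomposable reducible objects of $\mathscr{C}$: this is exactly what the classification statement immediately preceding the Corollary asserts, relying on Proposition~\ref{morita-prop} (which reduces everything to level-zero induction), Block's classification of irreducible weight $\Weylalg$-modules, and Theorem~\ref{first-Delta-thm} (which shows spectral flow preserves irreducibility and maximality, hence the indecomposable structure). So the proof is short: cite the classification, cite Theorem~\ref{first-non-interlocked-thm} for $\ell = 0$, and apply Theorem~\ref{spectral-flow-thm}(3) for each $\ell$.
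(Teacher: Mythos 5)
Your argument is correct and is exactly the route the paper takes: the paper derives the Corollary as a direct consequence of Theorem~\ref{spectral-flow-thm}(3) applied to the modules $\sigma^\ell(\mathcal{W}_0^{\pm})$, which by the preceding classification exhaust the indecomposable reducible objects of $\mathscr{C}$ and which are not weakly interlocked at $\ell=0$ by Theorem~\ref{first-non-interlocked-thm}. Your additional remarks verifying that $\mathbf{\Delta}(-\ell J,x)\in G^0(\WeylVA)$ are consistent with what the paper establishes in Section~\ref{Weyl-spectral-flow}.
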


\bibliographystyle{alpha}
\bibliography{references}

\end{document}